\newtheorem{remark}{Remark}[section] 
\newtheorem{example}{Example}[section] 
\title{A Riesz basis Galerkin method for the tempered fractional Laplacian
\thanks{This work was supported by the National Natural Science
Foundation of China under Grant No. 11671182,
and the Fundamental Research Funds for the Central
Universities under Grant No. lzujbky-2017-ot10 and lzujbky-2017-ct01.
The last author (GEK) would like to acknowledge support by
the OSD/ARO/MURI on ¡ÈFractional PDEs for Conservation
Laws and Beyond: Theory, Numerics and Applications (W911NF-15-1-0562).¡É
}}
\author{Zhijiang Zhang\footnotemark[2]
\and Weihua Deng\footnotemark[2]\thanks{ School of Mathematics and Statistics, Gansu Key Laboratory of Applied Mathematics and Complex Systems, Lanzhou University, Lanzhou 730000, P.R. China (Email: dengwh@lzu.edu.cn).}
 \and  George Em Karniadakis\footnotemark[3]\thanks{Division of Applied Mathematics, Brown University, 182 George, Providence, RI 02912, USA (E-mail: george\underline{\hspace{0.5em}}karniadakis@brown.edu).} }
\begin{document}

\maketitle

\begin{abstract}
The fractional Laplacian $\Delta^{\beta/2}$ is the generator of $\beta$-stable L\'evy process, which is the scaling
 limit of the L\'evy fight. Due to the divergence of the second moment of the jump length of the L\'evy fight it is not
 appropriate as
a physical model in many practical applications. However, using a parameter $\lambda$ to exponentially temper the isotropic power law measure of the jump length
leads to the tempered L\'evy fight, which has finite second moment. For  short time the tempered L\'evy fight exhibits the
dynamics of L\'evy fight while after sufficiently long time it turns to normal diffusion. The generator of tempered
 $\beta$-stable L\'evy process is the tempered fractional Laplacian $(\Delta+\lambda)^{\beta/2}$
[W.H. Deng, B.Y. Li, W.Y. Tian, and P.W. Zhang, Multiscale Model. Simul., in press, 2017].
In the current work, we present new computational methods for the tempered fractional Laplacian equation,
 including the cases with the homogeneous and nonhomogeneous generalized Dirichlet type boundary conditions.
We prove the well-posedness of the Galerkin weak formulation and provide convergence analysis of
the single scaling B-spline and multiscale Riesz bases finite element methods.
We propose a technique for efficiently  generating the entries of the dense stiffness matrix
and for solving the resulting algebraic equation by preconditioning. We also present several
numerical experiments to verify the theoretical results.
\end{abstract}
\begin{keywords}
Tempered fractional Laplacian,  Galerkin schemes, B-spline and Riesz basis, preconditioning.
\end{keywords}

\begin{AMS}
35R11, 65M60, 65M12, 65F08
\end{AMS}

\pagestyle{myheadings}
\thispagestyle{plain}
\markboth{Z. J. Zhang, W. H. Deng, and  G. E. Karniadaki}{VARIATIONAL METHOD FOR TEMPERED FRACTIONAL LAPLACIAN}

\section{Introduction}\label{sec1}
Phenomena of anomalous diffusion are ubiquitous in nature \cite{Metzler:00}. L\'evy flights with isotropic power law measure $|x|^{-n-\beta}$ of the jump length display superdiffusion, where $n$ is the dimension of space and $\beta \in (0,2)$ is a parameter. The scaling limit of L\'evy flight is the $\beta$-stable L\'evy process, the generator of which is the fractional Laplacian $\Delta^{\beta/2}$. This topic has recently become popular in both  pure and applied mathematical communities \cite{Pozrikidis:16}. The divergence of second moment of the L\'evy flight is associated with the possible infinite speed of the motion of the particles, which contradicts their nonzero masses, i.e., the pure power law distribution of jump length sometimes makes the L\'evy flight not a suitable physical model. Hence, tempering the distribution of the jump length becomes a natural idea, namely, modify $|x|^{-n-\beta}$ as  $e^{-\lambda |x|}|x|^{-n-\beta}$ with $\lambda$ being a small nonnegative real number, so that we can obtain the tempered L\'evy flight.

For small $\lambda$, the tempered L\'evy flight exhibits a slow transition of the dynamics from L\'evy flight to
 normal diffusion, which may occur after sufficient long time.
 The scaling limit of the tempered L\'evy flight is called tempered L\'evy process, the generator of which is the tempered fractional
Laplacian $(\Delta+\lambda)^{\beta/2}$ \cite{Deng:17}. In this paper, we mainly focus on developing numerical methods in the Riesz basis Galerkin framework
for the tempered fractional Laplacian, i.e.
\begin{eqnarray}\label{Laplacedirichelt}
\left\{\begin{array}{ll}
-(\Delta +\lambda)^ {\beta/2}p(x)=f(x),&~~ x\in \Omega,\\
p(x)=0,&~~x\in \mathbb{R}\backslash\Omega,
\end{array}\right.
\end{eqnarray}
which corresponds to the one-dimensional case of the initial and boundary value problem in Eq. (49) recently proposed in \cite{Deng:17}.
Here  $\beta\in (0,2),\,\lambda\ge0, \,\Omega=(a,b), \, f(x)\in H^{-\beta/2}(\Omega)$,  and
\begin{eqnarray}\label{Laplacianequation}
(\Delta +\lambda)^{\beta/2}p(x):=-c_{\beta} {~\rm P.V.~} \int_{\mathbb{R}}\frac{p(x)-p(y)}{e^{\lambda|x-y|}|x-y|^{1+\beta}}dy
\end{eqnarray}
with
\begin{eqnarray}
c_{\beta}=\left\{\begin{array}{ll}
\frac{\beta\Gamma(\frac{1+\beta}{2})}{2^{1-\beta}\pi^{1/2}\Gamma(1-\beta/2)}& ~{\rm for~}\lambda=0
 ~{\rm or}~ \beta=1,\\[3pt] \frac{\Gamma(\frac{1}{2})}{2\pi^{\frac{1}{2}}|\Gamma(-\beta)|} &~{\rm for}~\lambda>0~{\rm and}~\beta\not=1,
\end{array}\right.
\end{eqnarray}
where P.V. denotes the Cauchy principle value, being the limit of the integral over $\mathbb{R}\backslash B_{\epsilon} (x)$ as $\epsilon\to 0$;
the definition of this form is indeed necessary when $\beta\ge 1$.

Obviously, when $\lambda=0$, (\ref{Laplacianequation}) reduces to fractional Laplacian
\begin{eqnarray} \label{eqnarrddddd}
\Delta^{\beta/2}p(x):=-c_{\beta} {~\rm P.V.~} \int_{\mathbb{R}}\frac{p(x)-p(y)}{|x-y|^{1+\beta}}dy,
\end{eqnarray}
which has the Fourier transform (assuming that $\mathscr{F}[\Delta^{\beta/2} p(x)](\xi)$ and $\mathscr{F}[p(x)](\xi)$ exist)
 \begin{eqnarray}
 \mathscr{F}[\Delta^{\beta/2} p(x)](\xi)=-|\xi|^\beta\mathscr{F}[p(x)](\xi).
 \end{eqnarray}
Here  $\mathscr{F}[w(x)](\xi),\,\xi\in \mathbb{R}$ is defined by $\mathscr{F}[w(x)](\xi)=\int_{\mathbb{R}}w(x)e^{-ix\xi}dx$,
and for $w_1,w_2\in L^2(\mathbb{R})$, the Parseval identity \cite[pp. 100]{Guo:15} can be applied
\begin{eqnarray}\label{Parseval}
\int_{\mathbb{R}}w_1(x) \overline{w_2(x)}dx=\frac{1}{2\pi}\int_{\mathbb{R}}\mathscr{F}[w_1(x)](\xi)\overline{\mathscr{F}[w_2(x)](\xi)}d\xi.
\end{eqnarray}

Recently, the fractional Laplacian has attracted  a lot of attention,
but even in the simplified context \cite{Acosta:17,Acosta:171,DElia:13,Huang:14}
it is far from the well-developed status of the classical Laplacian. The numerical resolution of the  fractional Laplacian
involves two major challenging tasks, namely the singular kernel and the integration in an unbounded region.
For the finite difference method the convergence rate is even influenced by the regularity of the exact solution outside of
 the domain $\Omega$ \cite{Huang:14}. As for the tempered fractional differential equations,
 there are some published works on numerical methods \cite{Baeumer:10,Hanert:14,Lican:15,Zayernourt:15},
 but no  theoretical results under the variational framework exists. In the current paper we prove the well-posedness of the variational
formulation of  (\ref{Laplacedirichelt}), where extra efforts must be made to obtain the $\tilde{H}^{\beta/2}(\mathbb{R})$-coercivity.
Subsequently, the convergence analysis and the effective implementation of the finite dimensional approximation with the single-scale
or multiscale basis functions are presented, in which the properties of Riesz basis and multiresolution are used.

 The rest of this paper is organized  as follows. In Section \ref{sec2},
we introduce the function spaces and the properties of the tempered fractional Laplacian to be used.
The variational formulation of (\ref{Laplacedirichelt}) and its well-posedness are presented and discussed in Section \ref{sec3}.
We develop the Riesz basis Galerkin approximation and perform its convergence analysis in Section \ref{sec4}.
Section \ref{sec5} provides the effective implementations, including calculating the entries of the stiffness matrix
and solving the resulting algebraic equations. We discuss the model (\ref{Laplacedirichelt})
 with nonhomogeneous generalized Dirichlet type boundary condition in Section \ref{sec6}.
 The numerical results are given in Section \ref{sec7} and we conclude the paper with remarks in Section \ref{sec8}.


\section{Preliminaries}\label{sec2}
Throughout the paper by the notation $A\lesssim B$ we mean that $A$ can be bounded by a multiple of $B$,
independent of the parameters they may depend on, while the expression $A\simeq B$ means that $A\lesssim B\lesssim A$.
Let $E$ be an open set of $\mathbb{R}$. If $s\ge 0$ is a nonnegative integer, we denote by $H^s(E)$ the classical Sobolev
space equipped with the norm
\begin{eqnarray}
\|w\|_{H^s(E)}:=\left(\sum_{0\le k\le s}\left\|w^{(k)}\right\|^2_{L^2(E)}\right)^{1/2},
\end{eqnarray}
where $w^{(k)}$ stands for the $k$-th distributional derivative, and $H^0(E):=L^2(E)$. In the following, we define the fractional Sobolev spaces, where $s$ is not an integer.

For a fixed $s\in (0,1)$, the Sobolev space $H^s(E)$ is defined as
\begin{eqnarray}
H^s(E):=\left\{w\in L^2(E): |w|_{H^s(E)}<\infty    \right\},
\end{eqnarray}
where
\begin{eqnarray}
|w|_{H^s(E)}:=\left(\int_{E}\int_{E}\frac{\left|w(x)-w(y)\right|^2}{|x-y|^{1+2s}}dx dy\right)^{1/2}
\end{eqnarray}
is the Slobodeckii semi-norm \cite[pp. 74]{McLean:10} of $w(x)$ . The space $H^s(E)$ is a Banach space, endowed with the natural norm
\begin{eqnarray}
\left\|w\right\|_{H^s(E)}:=\left(\left\|w\right\|_{L^2(E)}^2+|w|^2_{H^s(E)}\right)^{1/2}.
\end{eqnarray}
Indeed, $H^s(E)$  also is a  Hilbert space \cite[pp. 75]{McLean:10}.
 For $s>1$ and $s\notin \mathbb{N}$, we can define  $H^s(E)$ as follows:
\begin{eqnarray}
H^s(E):=\left\{w\in H^{\lfloor s \rfloor}(E): \left|w^{(\lfloor s \rfloor)}\right|_{H^{s-\lfloor s \rfloor}(E)}<\infty\right\},
\end{eqnarray}
where $\lfloor s \rfloor$ is the biggest integer smaller than $s$. In this case, $H^s(E)$ is endowed with the norm
\begin{eqnarray}
\left\|w\right\|_{H^s(E)}:=\left(\left\|w\right\|^2_{H^{\lfloor s \rfloor}(E)}+\left|w^{(\lfloor s \rfloor)}\right|^2_{H^{s-\lfloor s \rfloor}(E)}\right)^{1/2}.
\end{eqnarray}
We note that $H^s(E)$ is a well-defined Banach space for every $s\ge 0$. Moreover, when $E=\mathbb{R}$, for $0<s<1$, it holds that $\left|w\right|^2_{H^s(\mathbb{R})}\simeq \int_{\mathbb{R}}|\xi|^{2s}|\mathscr{F}[w](\xi)|^2d\xi$,
and for $s>0$,  $\left\|w\right\|^2_{H^s(\mathbb{R})}\simeq \int_{\mathbb{R}}\left(1+|\xi|^{2s}\right)|\mathscr{F}[w](\xi)|^2d\xi $ \cite[pp. 79-80]{McLean:10}.
In fact, the Sobolev space $H^{s}(\mathbb{R})$ can also be defined as
\begin{eqnarray}
H^{s}(\mathbb{R})=\left\{w\in L^2(\mathbb{R}): \int_{\mathbb{R}}\left(1+|\xi|^{2s}\right)|\hat{w}(\xi)|^2d\xi  <\infty \right\}.
\end{eqnarray}
Let  $C_0^\infty(E)$  be the space of functions that are infinite differentiable on $E$ and have compact support in $E$.
Then $C_0^{\infty}(\mathbb{R})$ is dense in $H^s(\mathbb{R})$. However, if $E\subset\mathbb{R}$ is strict, the space $C_0^{\infty}(E)$ generally
is not dense in $H^s(E)$. Hence, we denote by $H_0^s(E)$ the closure of $C_0^{\infty}(E)$ in $H^s(E)$. As usual, $H^{-s}(E)$ is the dual space of $H_0^s(E)$.
 In addition, let $\Omega=(a,b)$ be a nonempty open interval of $\mathbb{R}$.
By $\tilde{C}_0^{\infty}(\Omega)$ we denote the space of all infinitely differentiable functions on $\mathbb{R}$ whose support is compact
and contained in $\Omega$. For $s>0$, we use $\tilde{H}_0^s(\Omega)$ to denote the closure of $\widetilde{C}_0^{\infty}(\Omega)$ in $H^s(\mathbb{R})$.
 For $s=0$, $\widetilde{H}_0^{s}(\Omega)$ is interpreted as the closure of $\widetilde{C}_0^{\infty}(\Omega)$ in $L^2(\mathbb{R})$,
and denoted as $\widetilde{L}^2(\Omega)$. Obviously, $\widetilde{H}_0^s(\Omega)\subset H_0^s(\Omega)$. Moreover,
by \cite{Fiscella:15}, when $s\in (0,1)$, $\widetilde{H}^s_0(\Omega)$ can also be defined by
\begin{eqnarray}
\widetilde{H}_0^{s}(\Omega)=\left\{w(x)\in H^s(\mathbb{R}):  w(x)=0 ~{\rm a.e.~for~} x\in \mathbb{R}\backslash  \Omega\right\}.
\end{eqnarray}

Here, the space $\widetilde{C}_0^{\infty}(\Omega)$  is  actually the space $C_c^{\infty}(a,b)$ in  \cite[pp. 178]{Jia:09} and
 the space $C_0^{\infty}(a,b)$ in \cite[pp. 237]{Fiscella:15}.
The space $\widetilde{H}_0^{s}(\Omega)$ is the space $H_0^\mu(a,b)$ with $\mu=s$ in \cite[pp. 178]{Jia:09}
 and the space $X_0^{s,p}(a,b)$ with $p=2$  in \cite[pp. 236--237]{Fiscella:15}.

Next, we give some properties of the tempered fractional Laplacian.
\begin{proposition}\label{sec1lemma1}
For $w(x)\in C_0^\infty(\mathbb{R})$ and $\lambda >0 $,  we have
  \begin{eqnarray}\label{lemma1eq1}
 &&\mathscr{F}\left[(\Delta +\lambda\right)^{\beta/2}w(x)](\xi)\nonumber\\
 &&~=(-1)^{\lfloor \beta \rfloor}\left(\lambda^\beta-\left(\lambda^2+\left|\xi\right|^2\right)^{\frac{\beta}{2}}\cos\left(\beta \arctan\left(\frac{\left|\xi\right|}{\lambda}\right)\right)\right)\mathscr{F}[w](\xi)
\end{eqnarray}
for $\beta\in (0,1)\cup(1,2)$, and
\begin{eqnarray}
&&\mathscr{F}\left[(\Delta +\lambda\right)^{\beta/2}w(x)](\xi)\nonumber\\
&&~~=\frac{2}{\pi}\left(-\left|\xi\right|\arctan\left(\frac{\left|\xi\right|}{\lambda}\right)
+\frac{\lambda}{2}\ln(\lambda^2+\left|\xi\right|^2)-\lambda\ln(\lambda)\right)\mathscr{F}[w](\xi)
\end{eqnarray}
for $\beta=1$, where $\lfloor \beta \rfloor:=\left\{z\in \mathbb{N}: 0\le\beta-z<1\right\}$.
\end{proposition}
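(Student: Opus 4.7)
The plan is to change variable $z=y-x$ in the defining integral so that
$$(\Delta+\lambda)^{\beta/2}w(x)=-c_{\beta}\,\mathrm{P.V.}\!\int_{\mathbb{R}}\frac{w(x)-w(x+z)}{e^{\lambda|z|}|z|^{1+\beta}}\,dz,$$
and then take the Fourier transform. Since $\mathscr{F}[w(\cdot+z)](\xi)=e^{iz\xi}\mathscr{F}[w](\xi)$ under the convention used in the paper and the kernel is even in $z$, the sine piece of $e^{iz\xi}$ integrates to zero under P.V., leaving
$$\mathscr{F}[(\Delta+\lambda)^{\beta/2}w](\xi)=-c_{\beta}\mathscr{F}[w](\xi)\,J(\xi),\quad J(\xi):=-\!\int_{0}^{\infty}\frac{e^{-a_{+}z}+e^{-a_{-}z}-2e^{-\lambda z}}{z^{1+\beta}}\,dz,$$
with $a_{\pm}:=\lambda\mp i\xi$, both having positive real part. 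The equivalence of the two integral representations follows from $e^{-a_{+}z}+e^{-a_{-}z}=2e^{-\lambda z}\cos(z\xi)$.

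For the closed-form evaluation of $J(\xi)$, the key identity is
$$\int_{0}^{\infty}z^{-\beta-1}\bigl(e^{-az}-1\bigr)\,dz=\Gamma(-\beta)a^{\beta},\qquad\beta\in(0,1),\ \Re a>0,$$
together with its regularized analogue $\int_{0}^{\infty}z^{-\beta-1}(e^{-az}-1+az)\,dz=\Gamma(-\beta)a^{\beta}$ valid for $\beta\in(1,2)$, both obtained by integration by parts from the standard Laplace-transform formula. For $\beta\in(0,1)$ the three constants $1+1-2=0$ in $J(\xi)$ let me split immediately to obtain $J(\xi)=-\Gamma(-\beta)[a_{+}^{\beta}+a_{-}^{\beta}-2\lambda^{\beta}]$. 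For $\beta\in(1,2)$ a naive split is divergent at the origin, so I insert the linear pieces $-a_{\pm}z$ and $-\lambda z$; the identity $a_{+}+a_{-}-2\lambda=0$ ensures these insertions cancel and the same closed form results. Using the polar representation $a_{\pm}=(\lambda^{2}+\xi^{2})^{1/2}e^{\mp i\arctan(\xi/\lambda)}$ and parity of cosine yields $a_{+}^{\beta}+a_{-}^{\beta}=2(\lambda^{2}+\xi^{2})^{\beta/2}\cos(\beta\arctan(|\xi|/\lambda))$. Combining with $c_{\beta}=\Gamma(1/2)/(2\pi^{1/2}|\Gamma(-\beta)|)$ and the sign $\operatorname{sgn}\Gamma(-\beta)=(-1)^{\lfloor\beta\rfloor+1}$ on $(0,1)\cup(1,2)$, the prefactor $-c_{\beta}\cdot 2\Gamma(-\beta)$ collapses to $(-1)^{\lfloor\beta\rfloor}$, reproducing the stated formula for non-integer $\beta$.

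The case $\beta=1$ is then handled as a removable-singularity limit: $\Gamma(-\beta)$ has a simple pole at $\beta=1$ while the bracket $\lambda^{\beta}-(\lambda^{2}+\xi^{2})^{\beta/2}\cos(\beta\arctan(|\xi|/\lambda))$ vanishes there because $\cos(\arctan(|\xi|/\lambda))=\lambda/\sqrt{\lambda^{2}+\xi^{2}}$. L'Hopital's rule in $\beta$, using $\sin(\arctan(|\xi|/\lambda))=|\xi|/\sqrt{\lambda^{2}+\xi^{2}}$ together with $c_{1}=1/\pi$, differentiates out precisely the arctangent and logarithmic terms in the claim. I expect the main obstacle to be the $\beta\in(1,2)$ computation: the naive term-by-term splitting of $J(\xi)$ is divergent at the origin, so one must carry the linear regularization through and verify its cancellation from $a_{+}+a_{-}=2\lambda$; the interchange of the Cauchy principal value with the Fourier transform also needs a short justification, but is standard for $w\in C_{0}^{\infty}(\mathbb{R})$.
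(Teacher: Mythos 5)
Your argument is correct, but it takes a genuinely different route from the paper's. The paper symmetrizes the kernel, Fourier transforms, and then integrates by parts in $y$ so that the multiplier reduces to tabulated integrals of the form $\int_0^\infty y^{-\beta}e^{-\lambda y}\sin(y\xi)\,dy$ and $\int_0^\infty y^{-\beta}e^{-\lambda y}\cos(y\xi)\,dy$ (Gradshteyn--Ryzhik 3.944(5)--(6)), with a second integration by parts for $1<\beta<2$ and separate logarithmic-integral formulas (4.441, 4.331, with Euler's constant cancelling) for $\beta=1$. You instead evaluate the multiplier through the complex Gamma identity $\int_0^\infty z^{-1-\beta}(e^{-az}-1)\,dz=\Gamma(-\beta)a^{\beta}$ ($\operatorname{Re}a>0$) and its once-regularized analogue, exploiting the algebraic cancellations $1+1-2=0$ and $a_{+}+a_{-}-2\lambda=0$; this treats $\beta\in(0,1)$ and $\beta\in(1,2)$ uniformly and makes the sign bookkeeping $(-1)^{\lfloor\beta\rfloor}=\,-\operatorname{sgn}\Gamma(-\beta)$ transparent, at the cost of invoking the principal-branch Laplace formula for complex $a$. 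Your treatment of $\beta=1$ by L'H\^opital is also sound, but one point must be made explicit to close it: since $c_\beta=\frac{1}{2|\Gamma(-\beta)|}\to 0$ as $\beta\to1$ while $c_1=\frac{1}{\pi}$, you cannot pass to the limit in the full formula (both sides tend to $0$); the limit has to be taken at the level of the integral $J(\xi)=\int_0^\infty\frac{2-2\cos(z\xi)}{e^{\lambda z}z^{1+\beta}}\,dz$, whose continuity in $\beta$ at $\beta=1$ you should justify by dominated convergence (using $2-2\cos(z\xi)\le\min\{4,z^2\xi^2\}$ and the factor $e^{-\lambda z}$), after which $J(\xi)$ at $\beta=1$ equals $\lim_{\beta\to1}2\Gamma(-\beta)\bigl(\lambda^{\beta}-(\lambda^2+\xi^2)^{\beta/2}\cos(\beta\arctan(|\xi|/\lambda))\bigr)$ and multiplication by $-c_1$ gives the stated $\beta=1$ formula. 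Your closing remarks on the $(1,2)$ regularization and on interchanging the principal value with the Fourier transform are the right things to check, and both go through for $w\in C_0^{\infty}(\mathbb{R})$ and $\lambda>0$ exactly as you indicate.
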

\begin{proof}
For $w\in C_0^{\infty}(\mathbb{R})$, it holds that
 \begin{eqnarray}\label{eeeeenhjjksd1}
 \left(\Delta+\lambda\right)^{\beta/2} w(x)=\frac{-c_\beta}{2}\int_{\mathbb{R}}\frac{2w(x)-w(x-y)-w(x+y)}{e^{\lambda |y|}|y|^{1+\beta}}dy.
 \end{eqnarray}
 So $\left(\Delta+\lambda\right)^{\beta/2} w(x)$ make sense for $x\in \mathbb{R}$. Then
\begin{eqnarray}
 &&\mathscr{F}\left[(\Delta +\lambda\right)^{\beta/2}w(x)](\xi)=-c_{\beta}\int_{0}^{\infty}\frac{2-e^{-iy\xi}-e^{iy\xi}}{e^{\lambda y}y^{1+\beta}}dy\,\mathscr{F}[w](\xi)\nonumber\\
 &&~~=-c_{\beta}\int_{0}^{\infty}\left(2-2\cos(y\xi)\right)e^{-\lambda y}y^{-1-\beta}dy\,\mathscr{F}[w](\xi)\nonumber\\
 &&~~=\frac{c_{\beta}}{\beta}\int_{0}^{\infty}\left(2-2\cos(y\xi)\right)e^{-\lambda y}d\left(y^{-\beta}\right)\,\mathscr{F}[w](\xi)\nonumber\\
 &&~~=\frac{2c_{\beta}}{-\beta}\int_0^{\infty}y^{-\beta}e^{-\lambda y}\left(\xi\sin(y\xi)-\lambda(1-\cos(y\xi))\right)dy\,\mathscr{F}[w](\xi).
\end{eqnarray}
Since $\xi\sin(y\xi)-\lambda(1-\cos(y\xi)$  is an even function w.r.t. $\xi$, in the following we assume  $\xi\ge 0$.

If $0<\beta<1$, we have
\begin{eqnarray}
&&\int_0^{\infty}y^{-\beta}e^{-\lambda y}\left(\xi\sin(y\xi)-\lambda(1-\cos(y\xi))\right)dy\nonumber\\
&&~=\xi\int_0^{\infty}y^{-\beta}e^{-\lambda y}\sin(y\xi)dy-\lambda\int_0^{\infty}y^{-\beta}e^{-\lambda y}dy+\lambda \int_0^{\infty}y^{-\beta}e^{-\lambda y}\cos(y\xi)dy\nonumber\\
&&~=\frac{\Gamma(1-\beta)\xi}{\left(\lambda^2+\xi^2\right)^{\frac{1-\beta}{2}}}
\sin\left((1-\beta)\arctan\left(\frac{\xi}{\lambda}\right)\right)-\lambda^\beta\int_0^{\infty}y^{-\beta}e^{-y}dy\nonumber\\
&&~~~+\frac{\Gamma(1-\beta)\lambda}{\left(\lambda^2+\xi^2\right)^{\frac{1-\beta}{2}}}\cos\left((1-\beta)\arctan\left(\frac{\xi}{\lambda}\right)\right)\nonumber\\
&&~={\Gamma(1-\beta)}{\left(\lambda^2+\xi^2\right)^{\frac{\beta}{2}}}\cos\left(\beta \arctan\left(\frac{\left|\xi\right|}{\lambda}\right)\right)
-\lambda^\beta \Gamma(1-\beta),
\end{eqnarray}
where the formulae  \cite[Eq. (3.944(5))]{Gradshteyn:80} and \cite[Eq. (3.944(6))]{Gradshteyn:80} have been used in the second step.

For $1<\beta<2$, using the integration by parts again, and similarly we have
\begin{eqnarray}
&&\int_0^{\infty}y^{-\beta}e^{-\lambda y}\left(\xi\sin(y\xi)-\lambda(1-\cos(y\xi))\right)dy\nonumber\\
&&~=\frac{1}{1-\beta}\int_0^{\infty}e^{-\lambda y}y^{1-\beta}\left(\left(\lambda^2-\xi^2\right)\cos(y\xi)+2\lambda
\xi \sin(y\xi)-\lambda^2\right)dy\nonumber\\
&&~=\frac{\Gamma(1-\beta)}{(\lambda^2+\xi)^{1-\frac{\beta}{2}}}
\bigg(\left(\lambda^2-\xi^2\right)\cos\left(\left(2-\beta\right)\arctan\left(\frac{\xi}{\lambda}\right)\right)\nonumber\\
&&~~~+2\lambda\xi\sin\left(\left(2-\beta\right)\arctan\left(\frac{\xi}{\lambda}\right)\right)\bigg)-\Gamma(1-\beta)\lambda^\beta\nonumber\\
&&~={\Gamma(1-\beta)}{\left(\lambda^2+\xi^2\right)^{\frac{\beta}{2}}}\cos\left(\beta \arctan
\left(\frac{\left|\xi\right|}{\lambda}\right)\right)-\lambda^\beta \Gamma(1-\beta).
\end{eqnarray}

For $\beta=1$, using the integration by parts, we have
\begin{eqnarray}
&&\int_0^{\infty}y^{-\beta}e^{-\lambda y}\left(\xi\sin(y\xi)-\lambda(1-\cos(y\xi))\right)dy\nonumber\\
&&~=\left(\lambda^2-\xi^2\right)\int_0^{\infty}\ln(y)e^{-\lambda y}\left(\cos(y\xi)\right)dy\nonumber\\
&&~~~+2\lambda\xi\int_0^{\infty}\ln(y)e^{-\lambda y}\sin(u\xi)dy-\lambda^2\int_0^{\infty}\ln(y)e^{-\lambda y}dy\nonumber\\
&&~=\frac{2\lambda\xi}{\lambda^2+\xi^2}\left(\lambda \arctan\left(\frac{\xi}{\lambda}\right)-\gamma\xi
-\frac{\xi}{2}\ln(\lambda^2+\xi^2)\right)\nonumber\\
&&~~~+\frac{\xi^2-\lambda^2}{\lambda^2+\xi^2}\left(\frac{\lambda}{2}\ln\left(\lambda^2+\xi^2\right)
+\xi\arctan\left(\frac{\xi}{\lambda}\right)+\lambda \gamma\right)+\lambda\left(\gamma+\ln(\lambda)\right)\nonumber\\
&&~=\xi\arctan\left(\frac{\xi}{\lambda}\right)-\frac{\lambda}{2}\ln(\lambda^2+\xi^2)+\lambda\ln(\lambda),
\end{eqnarray}
where $ \gamma$ denotes the Euler constant, and the formulae   \cite[Eq. (4.441(1))-(4.441(2))]{Gradshteyn:80}
and \cite[Eq. (4.331(1))]{Gradshteyn:80} have been used in the second step.
\end{proof}

The  following proposition  is  similar to Theorem 2.1 of  \cite{Jin:15}.
\begin{proposition}\label{section1propostion11}
For $0<\beta<2$, the tempered fractional Laplacian  can be extended to
 a continuous linear mapping from $H^\beta(\mathbb{R})$ to $L^2(\mathbb{R})$, and its Fourier transform remains the same.
 \end{proposition}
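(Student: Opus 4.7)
The plan is to combine the Fourier multiplier representation from Proposition \ref{sec1lemma1} (together with the well-known $\lambda=0$ case $\mathscr{F}[\Delta^{\beta/2}w](\xi)=-|\xi|^\beta\mathscr{F}[w](\xi)$) with the Parseval identity (\ref{Parseval}) and the density of $C_0^\infty(\mathbb{R})$ in $H^\beta(\mathbb{R})$. Writing $m_\beta(\xi)$ for the Fourier symbol of $(\Delta+\lambda)^{\beta/2}$ appearing in Proposition \ref{sec1lemma1}, the entire argument reduces to the pointwise multiplier estimate
\[
|m_\beta(\xi)| \lesssim 1 + |\xi|^\beta, \qquad \xi\in\mathbb{R}.
\]

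To establish this estimate I would treat the three ranges of $\beta$ separately. When $\beta\in(0,1)\cup(1,2)$ and $\lambda>0$, one uses $(\lambda^2+|\xi|^2)^{\beta/2}\lesssim 1+|\xi|^\beta$ together with $|\cos(\beta\arctan(|\xi|/\lambda))|\le 1$, absorbing the constant $\lambda^\beta$ into the additive $1$. For $\beta=1$ the factor $|\xi|\arctan(|\xi|/\lambda)$ is bounded by $\tfrac{\pi}{2}|\xi|$, while the term $\tfrac{\lambda}{2}\ln(\lambda^2+|\xi|^2)-\lambda\ln\lambda$ grows only like $\ln(1+|\xi|)$ and is therefore controlled by $1+|\xi|$. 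For $\lambda=0$ the symbol is $-|\xi|^\beta$ and the bound is immediate. In all cases $m_\beta$ is continuous on $\mathbb{R}$ (indeed $m_\beta(0)=0$), so local boundedness near the origin is automatic.

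With the symbol bound in hand, Parseval's identity (\ref{Parseval}) applied to any $w\in C_0^\infty(\mathbb{R})$ gives
\[
\|(\Delta+\lambda)^{\beta/2}w\|_{L^2(\mathbb{R})}^2
= \frac{1}{2\pi}\int_{\mathbb{R}}|m_\beta(\xi)|^2|\mathscr{F}[w](\xi)|^2 d\xi
\lesssim \int_{\mathbb{R}}\left(1+|\xi|^{2\beta}\right)|\mathscr{F}[w](\xi)|^2 d\xi
\simeq \|w\|_{H^\beta(\mathbb{R})}^2,
\]
where the last equivalence is the Fourier characterization of $H^\beta(\mathbb{R})$ recalled in Section \ref{sec2}. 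Since $C_0^\infty(\mathbb{R})$ is dense in $H^\beta(\mathbb{R})$, the operator $(\Delta+\lambda)^{\beta/2}$ extends by continuity to a bounded linear map $H^\beta(\mathbb{R})\to L^2(\mathbb{R})$. Finally, if $\{w_n\}\subset C_0^\infty(\mathbb{R})$ converges to $w$ in $H^\beta(\mathbb{R})$, then $m_\beta\mathscr{F}[w_n]\to m_\beta\mathscr{F}[w]$ in $L^2(\mathbb{R})$ by the symbol bound, while $\mathscr{F}[(\Delta+\lambda)^{\beta/2}w_n]\to \mathscr{F}[(\Delta+\lambda)^{\beta/2}w]$ in $L^2(\mathbb{R})$ by Plancherel applied to the extension; hence the Fourier multiplier formula passes to the limit and remains valid on all of $H^\beta(\mathbb{R})$.

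The main obstacle is not conceptual but computational: the symbol bound must be checked uniformly across the three regimes of $\beta$, and the $\beta=1$ case has to be handled separately because of the logarithmic term. Once the symbol estimate is in place, the remainder of the proof — Parseval, followed by continuous extension via density, and stability of the multiplier identity under $L^2$ limits — is routine.
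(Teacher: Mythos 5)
Your proposal is correct and follows essentially the same route as the paper: bound the Fourier symbol $\mathcal{G}(\lambda,\xi,\beta)$ by $1+|\xi|^\beta$ (treating $\beta\neq 1$, $\beta=1$, and $\lambda=0$ separately with the same elementary inequalities), apply Parseval on $C_0^\infty(\mathbb{R})$, extend by density, and pass the multiplier identity to the limit via $L^2$ convergence. No gaps.
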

 \begin{proof}
 Firstly, assume  $w\in C_0^{\infty}(\mathbb{R})$.
For $\lambda=0$, by
\begin{eqnarray}
 \mathscr{F}[\Delta^{\beta/2} w(x)](\xi)=-\left|\xi\right|^{\beta}\mathscr{F}\left[w(x)\right](\xi),
 \end{eqnarray}
 and the Parseval identity (\ref{Parseval}), we have
 \begin{eqnarray}
 \left\|\Delta^{\beta/2} w\right\|^2_{L^2(\mathbb{R})}=\frac{1}{2\pi}\int_{\mathbb{R}}\left|\xi\right|^{2\beta}
 \left|\mathscr{F}\left[w(x)\right](\xi)\right|^2 d\xi\le\left\|w\right\|^2_{H^{\beta}(\mathbb{R})}.\label{Laplaceextension}
 \end{eqnarray}
 For $\lambda>0$, by Proposition \ref{sec1lemma1} and the Parseval identity (\ref{Parseval}), we have
 \begin{eqnarray}
  &&\left\|\left(\Delta+\lambda\right)^{\beta/2} w\right\|^2_{L^2(\mathbb{R})}
  \le \int_{\mathbb{R}}\mathcal{G}^2(\lambda,\xi,\beta)\left|\mathscr{F}[w(x)](\xi)\right|^2d\xi\nonumber\\
  &&~\lesssim \int_{\mathbb{R}}\left(1+|\xi|^{2\beta}\right)\left|\mathscr{F}[w(x)](\xi)\right|^2d\xi
  \lesssim \left\|w\right\|^2_{H^{\beta}(\mathbb{R})},\label{ddmmmddee994322}
 \end{eqnarray}
 where
 \begin{eqnarray}\label{intergraeliqneqq}
 ~~~~\mathcal{G}(\lambda,\xi,\beta):=
 \left\{\begin{array}{l}(-1)^{\lfloor \beta \rfloor}\left(\left(\lambda^2+\left|\xi\right|^2\right)^{\frac{\beta}{2}}
 \cos\left(\beta \arctan(\frac{\left|\xi\right|}{\lambda})\right)-\lambda^\beta\right),~\beta\not=1,\\
 \frac{2}{\pi}\left(\left|\xi\right|\arctan\left(\frac{\left|\xi\right|}{\lambda}\right)
-\frac{\lambda}{2}\ln(\lambda^2+\left|\xi\right|^2)+\lambda\ln(\lambda)\right),~~~\beta=1;
\end{array}\right.
 \end{eqnarray}
 and the inequalities $\left(x_1+x_2+x_3\right)^2\le 3\left(x_1^2+x_2^2+x_3^2\right),
  \,\cos\left(\beta \arctan\left(\frac{\left|\xi\right|}{\lambda}\right)\right)\le 1$,
   $\arctan\left(\frac{\left|\xi\right|}{\lambda}\right)\le \frac{\pi}{2},\,
\ln(\lambda^2+|\xi|^2)\le \lambda^2+|\xi|^2$, and $\left(\lambda^2+|\xi|^2\right)^{\beta}\le 2^{\beta}\left(\lambda^{2\beta}+|\xi|^{2\beta}\right)$,
have been used. Then by the density of  $C_0^{\infty}(\mathbb{R})$  in $H^\beta(\mathbb{R})$,
one can continuously extend $\left(\Delta+\lambda\right)^{\beta/2}$ to an operator from  $H^\beta(\mathbb{R})$  to $L^2(\mathbb{R})$.

Secondly, let  $w\in H^{\beta}(\mathbb{R})$. Then $\mathscr{F}[\left(\Delta+\lambda\right)^{\beta/2}w(x)](\xi) \in L^2(\mathbb{R})$,
 and there exists a sequence $\left\{w_k\right\}\in C_0^\infty(\mathbb{R})$
 such  that $\lim_{k\to\infty}\left\|w-w_k\right\|_{H^\beta(\mathbb{R})}=0$. Therefore, for $\lambda>0$, by (\ref{ddmmmddee994322}) and the Parseval identity  we have
 \begin{eqnarray}
 &&\left\|-\mathcal{G}(\lambda,\xi,\beta)\mathscr{F}[w(x)](\xi)-\mathscr{F}[\left(\Delta+\lambda\right)^{\beta/2}w(x)](\xi)\right\|_{L^2(\mathbb{R})}
 \nonumber\\
 &&~\le \left\| -\mathcal{G}(\lambda,\xi,\beta)\mathscr{F}\left[w(x)-w_k\right](\xi)\right\|_{L^2(\mathbb{R})}\nonumber\\
 &&~~~+\left\| \mathscr{F}\left[\left(\Delta+\lambda\right)^{\beta/2}(w_k-w)(x)\right](\xi)\right\|_{L^2(\mathbb{R})}\nonumber\\
 &&~\le 2\left\|w-w_k\right\|_{H^\beta(\mathbb{R})}\to 0.
 \end{eqnarray}
 Thus  $\mathscr{F}\left[\left(\Delta+\lambda\right)^{\beta/2}w(x)\right](\xi)=-\mathcal{G}(\lambda,\xi,\beta)\mathscr{F}[w(x)](\xi)$.
 The proof for $\lambda=0$ is similar.
\end{proof}

%
%
%
%
%
%
%

\begin{proposition}
If $w(x)\in C^3(\mathbb{R})$ and $w(x),\,w^{(3)}(x)\in L^\infty(\mathbb{R})$, for $\lambda\ge 0$, we have
 \begin{eqnarray}\label{lemma1eq2}
  \lim_{\beta\to 2^-}(\Delta +\lambda)^ {\beta/2}w(x)=\frac{d^2 w(x)}{dx^2}.
\end{eqnarray}
\end{proposition}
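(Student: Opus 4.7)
The plan is to work from the symmetric integral representation
\begin{equation*}
(\Delta + \lambda)^{\beta/2} w(x) = -\frac{c_\beta}{2}\int_{\mathbb{R}} \frac{2w(x)-w(x-y)-w(x+y)}{e^{\lambda|y|}|y|^{1+\beta}}\,dy,
\end{equation*}
which was already derived in the proof of Proposition \ref{sec1lemma1}, and to show that as $\beta \to 2^-$ the integral develops a $(2-\beta)^{-1}$ singularity with coefficient $-2w''(x)$, while $c_\beta$ vanishes like $(2-\beta)$, so that the two effects combine to give exactly $w''(x)$.

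First I would use Taylor's theorem together with $w^{(3)}\in L^\infty$ and the symmetry $y\leftrightarrow -y$ (which kills the odd-order Taylor terms) to write
\begin{equation*}
2w(x)-w(x-y)-w(x+y) = -w''(x)\,y^2 + r(x,y),\qquad |r(x,y)|\leq C\|w^{(3)}\|_\infty |y|^3,
\end{equation*}
and split the integration into $\{|y|\leq 1\}$ and its complement. The near-origin piece produces the leading contribution $-w''(x)\cdot 2\int_0^1 y^{1-\beta}e^{-\lambda y}\,dy = -\frac{2w''(x)}{2-\beta} + O(1)$, the Taylor remainder on $\{|y|\leq 1\}$ is bounded by $\int_0^1 y^{2-\beta}\,dy = O(1)$, and the tail $\{|y|>1\}$ is controlled by $\|w\|_\infty$ (times $\int_1^\infty y^{-1-\beta}\,dy$ when $\lambda=0$, with much more room when $\lambda>0$ thanks to the exponential weight). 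Combining these gives
\begin{equation*}
(\Delta+\lambda)^{\beta/2} w(x) = \frac{c_\beta}{2-\beta}\,w''(x) + c_\beta \cdot O(1).
\end{equation*}

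The final step is to verify $c_\beta/(2-\beta)\to 1$ and $c_\beta\to 0$ as $\beta\to 2^-$, separately in each branch of the definition of $c_\beta$. For $\lambda=0$ I would use the pole expansion $\Gamma(1-\beta/2) = \tfrac{2}{2-\beta}+O(1)$ together with $\Gamma(3/2)=\sqrt{\pi}/2$ to obtain $c_\beta = (2-\beta)+O((2-\beta)^2)$; for $\lambda>0$, $\beta\ne 1$, the functional equation $\Gamma(-\beta)\,(-\beta)(1-\beta)(2-\beta) = \Gamma(3-\beta)$ gives $|\Gamma(-\beta)|\sim 1/[2(2-\beta)]$, yielding the same asymptotic. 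This closes the argument.

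The main obstacle is the interplay between the vanishing prefactor $c_\beta$ and the diverging integral, and in particular the $\lambda=0$ case, where one cannot globally subtract $w''(x)y^2$ from the numerator because $\int_1^\infty y^{1-\beta}\,dy$ diverges for $\beta<2$. This forces the splitting at $|y|=1$ and the use of the bound on $w$ itself (rather than on $w''$) to control the tail. The other delicate point, identifying the correct leading coefficient of $c_\beta$ near $\beta=2$, is essentially careful bookkeeping around the simple pole of $\Gamma$ at $0$ (in the $\lambda=0$ branch) and at $-2$ (in the $\lambda>0$, $\beta\ne 1$ branch); as a sanity check one can alternatively verify the limit in Fourier space using Proposition \ref{sec1lemma1}, where the multiplier $\mathcal{G}(\lambda,\xi,\beta)$ converges pointwise to $|\xi|^2 = -\mathscr{F}[w'']/\mathscr{F}[w]$ after applying the double-angle identity $\cos(2\arctan(|\xi|/\lambda)) = (\lambda^2-|\xi|^2)/(\lambda^2+|\xi|^2)$.
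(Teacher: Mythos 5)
Your proposal is correct and follows essentially the same route as the paper's proof: the symmetric second-difference representation, a second-order Taylor expansion with the $\|w^{(3)}\|_{L^\infty}$ remainder, a far-field bound via $\|w\|_{L^\infty}$, and the decisive balance $c_\beta\to 0$ together with $c_\beta/(2-\beta)\to 1$ as $\beta\to 2^-$. The only minor differences are that you treat $\lambda=0$ and $\lambda>0$ uniformly with a cutoff at $|y|=1$ and read off the asymptotics of $c_\beta$ from the poles of the Gamma function, whereas the paper evaluates the $\lambda>0$ integrals exactly in closed form (so no cutoff is needed there) and, for $\lambda=0$, invokes the identity $c_\beta=\frac{1}{\pi}\Gamma(1+\beta)\sin\left(\frac{\beta\pi}{2}\right)$ to get the same limits.
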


\begin{proof}
For $\lambda>0$, following the definition of tempered fractional Laplacian, we have
\begin{eqnarray}
&&\left|{c_\beta}\int_0^\infty \frac{w(x+y)-2w(x)+w(x-y)-w^{(2)}(x)y^2}{e^{\lambda y}y^{1+\beta}}dy\right|\nonumber\\
&&~\le c_{\beta}\big\|w^{(3)}\big\|_{L^\infty(\mathbb{R})}\int_0^\infty\frac{y^3}{y^{1+\beta}e^{\lambda y}}dy\nonumber\\
&&~=c_{\beta}\lambda^{\beta-3}\Gamma(3-\beta)\big\|w^{(3)}\big\|_{L^\infty(\mathbb{R})}\to 0, ~~~\beta\to 2^{-},\nonumber
\end{eqnarray}
where $c_\beta=\frac{\Gamma(\frac{1}{2})}{2\pi^{\frac{1}{2}}|\Gamma(-\beta)|}$. Further note that \begin{eqnarray*}
	&&{c_\beta}\int_0^\infty \frac{w^{(2)}(x)y^2}{e^{\lambda y}y^{1+\beta}}dy=w^{(2)}(x)c_{\beta}\lambda^{\beta-2}\Gamma(2-\beta)\nonumber\\
	&&~=w^{(2)}(x)\frac{(1-\beta)(-\beta)}{2}\to w^{(2)}(x), ~~\beta\to 2^{-},
\end{eqnarray*}
which results in the desired result.

For the case $\lambda=0$, note that
\begin{eqnarray*}
&&\left|{c_\beta}\int_c^\infty \frac{w(x+y)-2w(x)+w(x-y)}{y^{1+\beta}}dy\right|\nonumber\\
&&~=4c_{\beta}\left\|w\right\|_{L^{\infty}(\mathbb{R})}\int_c^\infty \frac{1}{y^{1+2s}}dy\to 0, ~~~\beta\to 2^{-};\nonumber\\[5pt]
&&\left|{c_\beta}\int_0^c \frac{w(x+y)-2w(x)+w(x-y)-w^{(2)}(x)y^2}{e^{\lambda y}y^{1+\beta}}dy\right|\nonumber\\
&&~=c_{\beta}\frac{c^{3-\beta}}{3-\beta}\big\|w^{(3)}\big\|_{L^\infty(\mathbb{R})}\to 0, ~~~\beta\to 2^{-},
\end{eqnarray*}
where $c\ge 0$  is an arbitrary given constant, and $c_\beta=\frac{\beta\Gamma(\frac{1+\beta}{2})}{2^{1-\beta}\pi^{1/2}\Gamma(1-\beta/2)}$. Therefore,
\begin{eqnarray*}
\left(\Delta +\lambda\right)^ {\beta/2} w(x)={c_\beta}\int_0^d \frac{w^{(2)}(x)y^2}{y^{1+\beta}}dy=\frac{c_\beta}{2-\beta}d^{2-\beta}w^{(2)}(x).
\end{eqnarray*}
By \cite[pp. 25]{Pozrikidis:16}
\begin{eqnarray*}
c_\beta=\frac{1}{\pi}\Gamma(1+\beta)\sin\left(\frac{\beta\pi}{2}\right)=\frac{1}{\pi}\Gamma(1+\beta)\sin\left(\frac{(2-\beta)\pi}{2}\right),
\end{eqnarray*}
we have  $\lim_{\beta\to 2^{-}}\frac{c_\beta}{2-\beta}d^{2-\beta}w^{(2)}(x)=w^{(2)}(x)$.
\end{proof}

Equation (\ref{lemma1eq2}) shows that if $\beta\to 2^-$,  the tempered fractional Laplacian coincides with the classical Laplacian.
Finally, we give the concept of Riesz basis that will be used later.
\begin{definition}[\cite{Jia:09,Urban:09}]
A countable collection of elements $\mathcal{E}:=\left\{e_i\right\}_{i\in I}\,(I\subset \mathbb{Z})$ of a Hilbert space $H$ is called a Riesz basis of $H$ if each element in $H$ has an expansion in terms of $\mathcal{E}$ and there exists (Riesz) constants $0<A\le B<\infty$ such that
\begin{eqnarray}
A\sum_{i\in I}|d_i|^2\le \bigg\|\sum_{i\in I}d_ie_i\bigg\|_H^2\le B\sum_{i\in I}|d_i|^2.
\end{eqnarray}
\end{definition}
\section{Weak solution and well-posedness}\label{sec3}
In this section, we first give the definition of the weak solution of (\ref{Laplacedirichelt}),
and then discuss the well-posedness of the  corresponding weak formulation. As in the usual approach in dealing with elliptic PDE,
 multiplying  both sides of (\ref{Laplacedirichelt}) by $v\in \widetilde{C}^\infty_0(\Omega)$ and integrating them over $\Omega$ leads to
\begin{eqnarray}
c_\beta\int_\mathbb{R} \int_\mathbb{R}\frac{\left(p(x)-p(y)\right)v(x)}{e^{\lambda|x-y|}\left|x-y\right|^{1+\beta}}dydx =\int_\Omega f(x) v(x)dx.
\end{eqnarray}
Instead of performing integration by parts, we use the fact
\begin{eqnarray}
\int_\mathbb{R} \int_\mathbb{R}\frac{\left(p(x)-p(y)\right)v(x)}{e^{\lambda|x-y|}\left|x-y\right|^{1+\beta}}dydx =\int_\mathbb{R} \int_\mathbb{R}\frac{\left(p(y)-p(x)\right)v(y)}{e^{\lambda|y-x|}\left|y-x\right|^{1+\beta}}dydx
\end{eqnarray}
to get the weak formulation of (\ref{Laplacedirichelt}): find $p\in  {\widetilde{H}_0^{\beta/2}(\Omega)}$ such that
 \begin{eqnarray}\label{weakformulae}
 B(p,v)=\langle f,v\rangle
 \end{eqnarray}
 for all $v\in {\widetilde{H}_0^{\beta/2}(\Omega)}$, where the duality pairing $\left\langle f,v\right\rangle:=\int_\Omega f(x) v(x)dx$ and the bilinear form
 \begin{eqnarray} \label{bilinearformeddee5555}
 B(p,v):=\frac{c_{\beta}}{2}\int_{\mathbb{R}}\int_{\mathbb{R}}\frac{\left(p(x)-p(y)\right)\left(v(x)-v(y)\right)}
 {e^{\lambda|x-y|}\left|x-y\right|^{1+\beta}}dydx.
 \end{eqnarray}

When $\lambda=0$,  Ref. \cite{Deng:17} gives the weak formulation of (\ref{Laplacedirichelt})  as: find $p\in {\widetilde{H}_0^{\beta/2}(\Omega)}$, such that
 \begin{eqnarray}\label{weakformulae1}
 \int_{\mathbb{R}}\Delta^{\beta/4}p\,\Delta^{\beta/4}v dx=\int_\Omega fv dx ~~~~\forall v\in {\widetilde{H}_0^{\beta/2}(\Omega)},
 \end{eqnarray}
being equivalent to (\ref{weakformulae}) with $\lambda=0$. In fact, it can be simply verified as:
for any $p, v\in \widetilde{H}_0^{\beta/2}(\Omega)$, by the Parseval identity (\ref{Parseval}) and $\mathscr{F} [\Delta^{\beta/4}p(x)](\xi)=-|\xi|^{\beta/2}\mathscr{F}[p(x)] (\xi)$, 
  \begin{eqnarray}
  &&\frac{c_\beta}{2}\int_{\mathbb{R}}\int_{\mathbb{R}}\frac{\left(p(x)-p(y)\right)\left(v(x)-v(y)\right)}{\left|x-y\right|^{1+\beta}}dydx\nonumber\\
  &&~~=\frac{c_\beta}{2}\int_{\mathbb{R}}\int_{\mathbb{R}}\frac{(p(x+y)-p(y))(v(x+y)-v(y))}{|x|^{1+\beta} }dy dx\nonumber\\
  &&~~=\frac{c_\beta}{4\pi}\int_{\mathbb{R}}\int_{\mathbb{R}}\mathscr{F}\left[\frac{p(x+y)-p(y)}{|x|^{1/2+\beta/2} } \right](\xi)
  \,\overline{\mathscr{F}\left[\frac{v(x+y)-v(y)}{|x|^{1/2+\beta/2} } \right](\xi)} d\xi dx\nonumber\\
  &&~~=\frac{c_\beta}{4\pi}\int_{\mathbb{R}}\int_{\mathbb{R}}\frac{|e^{i\xi x}-1|^2}{|x|^{1+\beta}} dx  \,\mathscr{F}[p(y)](\xi)\, \overline{\mathscr{F}[v(y)]
  (\xi)}d\xi\nonumber\\
  &&~~=\frac{1}{2\pi}\int_{\mathbb{R}} \left(-|\xi|^{\beta/2}\mathscr{F}[p(x)](\xi)\right)\left(\overline{-|\xi|^{\beta/2}\mathscr{F}[v(x)](\xi)}\right) d\xi\nonumber\\
  &&~~=\int_{\mathbb{R}}\Delta^{\beta/4}p\,\Delta^{\beta/4}v dx,\label{hkkdkdh34452}
  \end{eqnarray}
  where  the result \cite[pp. 23-28]{Pozrikidis:16}
  \begin{eqnarray}
\int_{\mathbb{R}}\frac{|e^{i\xi x}-1|^2}{|x|^{1+\beta}} dx
=\int_{\mathbb{R}}\frac{2-2\cos(\xi x)}{|x|^{1+\beta}}
=4|\xi|^{\beta}\int_{\mathbb{R}} \frac{\sin^2{\frac{\omega}{2}}}{|\omega|^{1+\beta}}d\omega=\frac{2|\xi|^{\beta}}{c_{\beta}}
  \end{eqnarray}
has been used in the second equality from below. However,  when  $\lambda>0$, (\ref{weakformulae}) does not have the equivalent form like (\ref{weakformulae1}), which can be simply discovered by recalling the proof process of Proposition \ref{sec1lemma1}, i.e.,
  \begin{eqnarray}
  &&B(p,v)
 =\frac{c_\beta}{2\pi}\int_{\mathbb{R}}\int_{0}^{\infty}\frac{2-2\cos(\xi x)}{e^{\lambda|x|}|x|^{1+\beta}} dx  \,\mathscr{F}[p(y)](\xi)\, \overline{\mathscr{F}[v(y)](\xi)}d\xi\nonumber\\
 &&~~~~~~~~~=\frac{1}{2\pi}\int_{\mathbb{R}}\mathcal{G}(\lambda,\xi,\beta)  \,\mathscr{F}[p(y)](\xi)\, \overline{\mathscr{F}[v(y)](\xi)}d\xi.
 \label{hkkmo0dkdh34452}
 \end{eqnarray}
 where $\mathcal{G}(\lambda,\xi,\beta)$ is given in (\ref {intergraeliqneqq}).
On the contrary, for $\beta\in (0,1)\cup(1,2)$, by introducing the operators  ${}_{-\infty}\mathbb{D}_x^{\beta/2,\lambda}$ and ${}_{x}\mathbb{D}_\infty^{\beta/2,\lambda}$, being given as \cite[Definition 3]{Lican:15}
 \begin{eqnarray*}
&&{}_{-\infty}\mathbb{D}_x^{\beta/2,\lambda} u(x)
=\frac{e^{-\lambda x}}{\Gamma(n-\beta)}\frac{d^n}{dx^n}\int_{-\infty}^x\frac{e^{\lambda \xi}u(\xi)}{(x-\xi)^{\beta-n+1}}d\xi,
~~n=\lfloor \beta/2\rfloor +1,\nonumber\\[5pt]
&&{}_{x}\mathbb{D}_\infty^{\beta/2,\lambda} u(x)
=\frac{e^{\lambda x}}{\Gamma(n-\beta)}\frac{(-d)^n}{dx^n}\int_x^{\infty}\frac{e^{-\lambda \xi}u(\xi)}{(\xi-x)^{\beta-n+1}}d\xi,
~~n=\lfloor \beta/2\rfloor +1,
 \end{eqnarray*}
one has
  \begin{proposition}
  For $\beta\in (0,1)\cup (1,2)$, (\ref{weakformulae}) has the equivalent weak formulation:
  find $p\in  {\widetilde{H}_0^{\beta/2}(\Omega)}$ such that
 \begin{eqnarray*}
B(p,v)= (-1)^{\lfloor \beta\rfloor}B_1(p,v)=\left\langle f,v\right\rangle
 \end{eqnarray*}
 for all $v\in {\widetilde{H}_0^{\beta/2}(\Omega)}$, where  $B_1(p,v)$ is given as
 \begin{eqnarray*}
\frac{1}{2}\int_{\mathbb{R}}{}_{-\infty}\mathbb{D}_x^{\beta/2,\lambda}p\,\,{}_x\mathbb{D}_\infty^{\beta/2,\lambda}vdx
 +\frac{1}{2}\int_{\mathbb{R}}{}_x\mathbb{D}_\infty^{\beta/2,\lambda}p\,\, {}_{\infty}\mathbb{D}_x^{\beta/2,\lambda}vdx-\lambda^{\beta}\int_{\mathbb{R}}p\,vdx.
 \end{eqnarray*}
\end{proposition}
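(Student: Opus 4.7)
The plan is to prove $B(p,v)=(-1)^{\lfloor\beta\rfloor}B_1(p,v)$ for all $p,v\in\widetilde{H}_0^{\beta/2}(\Omega)$ by comparing both forms on the Fourier side. Formula (\ref{hkkmo0dkdh34452}) already expresses $B(p,v)$ as a Plancherel pairing with multiplier $\mathcal{G}(\lambda,\xi,\beta)$ from (\ref{intergraeliqneqq}), so the task reduces to producing the same multiplier out of $(-1)^{\lfloor\beta\rfloor}B_1(p,v)$. Since $\widetilde{C}_0^{\infty}(\Omega)$ is dense in $\widetilde{H}_0^{\beta/2}(\Omega)$, I would first verify the identity for $p,v\in\widetilde{C}_0^{\infty}(\Omega)$ and then extend by continuity.

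The key input is the symbol calculus of the tempered Riemann--Liouville operators. A direct convolution-plus-derivative computation, built on the elementary integral $\int_0^\infty t^{-\alpha}e^{-(\lambda+i\xi)t}\,dt=\Gamma(1-\alpha)(\lambda+i\xi)^{\alpha-1}$, gives for $p\in\widetilde{C}_0^{\infty}(\Omega)$
$$\mathscr{F}\bigl[{}_{-\infty}\mathbb{D}_x^{\beta/2,\lambda}p\bigr](\xi)=(\lambda+i\xi)^{\beta/2}\mathscr{F}[p](\xi),\qquad \mathscr{F}\bigl[{}_{x}\mathbb{D}_\infty^{\beta/2,\lambda}p\bigr](\xi)=(\lambda-i\xi)^{\beta/2}\mathscr{F}[p](\xi),$$
interpreted with the principal branch, which is unambiguous because $\lambda\pm i\xi$ lies in the open right half-plane. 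Since therefore $\overline{(\lambda\mp i\xi)^{\beta/2}}=(\lambda\pm i\xi)^{\beta/2}$, applying the Parseval identity (\ref{Parseval}) to each of the three pieces of $B_1(p,v)$ produces
$$B_1(p,v)=\frac{1}{2\pi}\int_{\mathbb{R}}\left(\frac{(\lambda+i\xi)^{\beta}+(\lambda-i\xi)^{\beta}}{2}-\lambda^{\beta}\right)\mathscr{F}[p](\xi)\overline{\mathscr{F}[v](\xi)}\,d\xi.$$
Writing $\lambda\pm i\xi=(\lambda^2+\xi^2)^{1/2}e^{\pm i\arctan(\xi/\lambda)}$ and invoking Euler's formula together with the evenness of cosine collapses the bracket to $(\lambda^2+\xi^2)^{\beta/2}\cos(\beta\arctan(|\xi|/\lambda))-\lambda^{\beta}$, and multiplying by $(-1)^{\lfloor\beta\rfloor}$ reproduces exactly the multiplier $\mathcal{G}(\lambda,\xi,\beta)$ in (\ref{intergraeliqneqq}) for $\beta\in(0,1)\cup(1,2)$. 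A comparison with (\ref{hkkmo0dkdh34452}) then yields $B(p,v)=(-1)^{\lfloor\beta\rfloor}B_1(p,v)$ on $\widetilde{C}_0^{\infty}(\Omega)$, and density, together with the continuity of both forms on $\widetilde{H}_0^{\beta/2}(\Omega)$ (the continuity of $B_1$ being read off from the $1+|\xi|^{\beta}$ bound on its multiplier, exactly as in the proof of Proposition \ref{section1propostion11}), extends the identity to the whole space.

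The main obstacle I anticipate is the rigorous justification of the Fourier multiplier identities for ${}_{-\infty}\mathbb{D}_x^{\beta/2,\lambda}$ and ${}_{x}\mathbb{D}_\infty^{\beta/2,\lambda}$ when they act on general $\widetilde{H}_0^{\beta/2}(\Omega)$ data rather than on Schwartz or $\widetilde{C}_0^{\infty}(\Omega)$ data: although $p$ itself vanishes outside $\Omega$, the one-sided tempered convolutions spread its support to all of $\mathbb{R}$, so one needs either an extension argument in the spirit of Proposition \ref{section1propostion11} establishing that these operators map $\widetilde{H}_0^{\beta/2}(\Omega)$ continuously into $L^2(\mathbb{R})$ with the stated symbol, or a careful limiting argument along $\widetilde{C}_0^{\infty}(\Omega)$ approximants controlled in the $\widetilde{H}_0^{\beta/2}$ norm. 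A minor secondary point is the uniform tracking of the principal branch in the sign of $\xi$ so that the conjugation identity $\overline{(\lambda-i\xi)^{\beta/2}}=(\lambda+i\xi)^{\beta/2}$ remains valid on the whole real line, which is where the split between $\beta\in(0,1)$ and $\beta\in(1,2)$ enters through the factor $(-1)^{\lfloor\beta\rfloor}$.
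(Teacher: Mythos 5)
Your proposal is correct and follows essentially the same route as the paper: rewrite $B(p,v)$ via (\ref{hkkmo0dkdh34452}) as a Parseval pairing with multiplier $\mathcal{G}(\lambda,\xi,\beta)$, use the symbols $(\lambda\pm i\xi)^{\beta/2}$ of the tempered Riemann--Liouville operators (which the paper simply cites from Lemma 1 of \cite{Lican:15} rather than rederiving), and match the multipliers through the polar identity $(\lambda+i\xi)^{\beta}+(\lambda-i\xi)^{\beta}=2(\lambda^2+\xi^2)^{\beta/2}\cos\left(\beta\arctan\left(|\xi|/\lambda\right)\right)$. Your explicit density-and-continuity step and the branch-tracking remark are just careful elaborations of details the paper leaves implicit, not a different argument.
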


 \begin{proof}
 According to (\ref{hkkmo0dkdh34452}),  for $\beta\in (0,1)\cup(1,2)$, there exists
 \begin{eqnarray*}
 B(p,v)=\frac{(-1)^{\lfloor \beta \rfloor}}{2\pi}\int_{\mathbb{R}}\left(\frac{1}{2}(\lambda+i\xi)^{\beta}+ \frac{1}{2}(\lambda-i\xi)^{\beta}-\lambda^\beta \right)\,\mathscr{F}[p(y)](\xi)\, \overline{\mathscr{F}[v(y)](\xi)}d\xi.~~
 \end{eqnarray*}
 Then  the desired result is obtained  by using the Parseval identity (\ref{Parseval}) and \cite[Lemma 1]{Lican:15}
 \begin{eqnarray}
 &&\mathscr{F}\left[{}_{-\infty}\mathbb{D}_x^{\frac{\beta}{2},\lambda}u(x)\right]=(\lambda +i\xi)^{\beta/2}\mathscr{F}[u](\xi),\\
 &&\mathscr{F}\left[{}_x\mathbb{D}_\infty^{\frac{\beta}{2},\lambda}u(x)\right]=(\lambda -i\xi)^{\beta/2}\mathscr{F}[u](\xi).
 \end{eqnarray}
 \end{proof}

 To obtain the well-posedness of the weak formulation (\ref{weakformulae}),  we need to show that the bilinear form  $B(\cdot,\cdot)$ is  coercive, i.e.,
 \begin{eqnarray}\label{nehlllsdd=-eed}
 \left\|p\right\|^2_{H^{\beta/2}(\mathbb{R})}\lesssim B(p,p) ~~~~\forall p\in \widetilde{H}_0^{\beta/2}(\mathbb{R}).
 \end{eqnarray}
When $\lambda=0$ and $p\in \widetilde{H}_0^{\beta/2}(\Omega)$, it can be easily proved that (\ref{nehlllsdd=-eed}) holds, since
 \begin{eqnarray}
 B(p,p)\ge \int_{\mathbb{R}}\int_{\mathbb{R}\backslash\Omega}\frac{\left|p(x)\right|^2}{|x-y|^{1+\beta}}dydx\ge \frac{2(b-a)^{-\beta}}{\beta}\left\|p\right\|_{L^2(\Omega)}^2.
 \end{eqnarray}
Along this line, combining  (\ref{hkkdkdh34452}) and (\ref{hkkmo0dkdh34452}), for $\lambda>0$ and $\beta\in (0,2)$, one may expect to find a constant $C$ such that
 \begin{eqnarray}
 \mathcal{G}(\lambda,\xi,\beta)\ge C \left|\xi\right|^{\beta}
 \end{eqnarray}
for all $\xi\in \mathbb{R}$, which leads to
\begin{eqnarray}
B(p,p)\ge \frac{C}{2\pi}\int_{\mathbb{R}}\left|\xi\right|^{\beta}\left|\mathscr{F}[p(x)](\xi)\right|^2d\xi\ge C_1\left\|p\right\|^2_{H^{\beta/2}(\mathbb{R})}
\end{eqnarray}
with $C_1$ being a positive constant. Unfortunately, although
 \begin{eqnarray}\label{positivenumver}
 \mathcal{G}(\lambda,\xi,\beta)\ge0
 \end{eqnarray}
  for all $\xi \in \mathbb{R}$ (see  \ref{appendix}), by using  L'Hospital rule,  it holds that
 \begin{eqnarray}
\lim_{\xi\to 0^+} \frac{\mathcal{G}(\lambda,\xi,\beta)}{\xi^\beta}=0.
  \end{eqnarray}
  Therefore, there is no such a constant $C$.  In the following, we will work with  the bilinear form (\ref{bilinearformeddee5555}) directly.



\begin{proposition}\label{sec2proposition1}
  If $0<s<1$, then for any   real number $\delta>0$, there exists a positive constant  $C=C(\Omega, \delta,s)$ such that
 \begin{eqnarray}
 \left\|w\right\|_{L^2(\Omega)}\le C\left|w\right|_{H^s(\Omega^*)} ~~\forall w\in  {\widetilde{H}_0^{s}(\Omega)},
 \end{eqnarray}
 where $\Omega^*=\left(a-\delta,b+\delta\right)$.
 In particular, if $\frac{1}{2}<s<1$, the above result can be further improved as: there exists a positive constant $C=C(\Omega,s)$ such that
 \begin{eqnarray}\label{sec2proposition1eq2}
 \left\|w\right\|_{L^2(\Omega)}\le C\left|w\right|_{H^s(\Omega)} ~~\forall w\in  {\widetilde{H}_0^{s}(\Omega)}.
 \end{eqnarray}
 \end{proposition}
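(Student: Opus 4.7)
The key observation is that every $w\in\widetilde{H}_0^{s}(\Omega)$ vanishes almost everywhere on $\mathbb{R}\setminus\Omega$, hence on $\Omega^*\setminus\Omega$. The plan is to exploit this boundary condition directly inside the Slobodeckii double integral, where the increment $w(x)-w(y)$ collapses to $w(x)$ as soon as $x\in\Omega$ and $y$ lies in the exterior buffer $\Omega^*\setminus\Omega$.

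For the first claim I would restrict the Slobodeckii integral to the sub-region $\{x\in\Omega,\ y\in\Omega^*\setminus\Omega\}$ together with its mirror image, obtaining
\begin{equation*}
|w|_{H^s(\Omega^*)}^2 \ge 2\int_\Omega |w(x)|^2\,K(x)\,dx,\qquad K(x):=\int_{\Omega^*\setminus\Omega}\frac{dy}{|x-y|^{1+2s}}.
\end{equation*}
It then suffices to produce a uniform lower bound $K(x)\ge c(\Omega,\delta,s)>0$ for all $x\in\Omega$. This reduces to a one-line elementary integration: using just the right-hand buffer $(b,b+\delta)$, the antiderivative equals $\frac{1}{2s}\bigl((b-x)^{-2s}-(b+\delta-x)^{-2s}\bigr)$, which is monotone increasing on $[a,b)$ and therefore bounded below by its value at $x=a$, namely $\frac{1}{2s}\bigl((b-a)^{-2s}-(b+\delta-a)^{-2s}\bigr)>0$. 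This step is entirely routine.

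For the sharpened bound with $\Omega^*=\Omega$ and $s>1/2$ the previous argument breaks down because the exterior buffer has disappeared; nothing in the diagonal integral $|w|_{H^s(\Omega)}^2$ sees the boundary behavior directly. The plan here is to invoke the one-dimensional fractional Hardy inequality, valid precisely for $s>1/2$: for every $w\in\widetilde{H}_0^{s}(\Omega)$,
\begin{equation*}
\int_\Omega\frac{|w(x)|^2}{\mathrm{dist}(x,\partial\Omega)^{2s}}\,dx \le C(s)\,|w|_{H^s(\Omega)}^2.
\end{equation*}
Since $\mathrm{dist}(x,\partial\Omega)\le(b-a)/2$ on $\Omega$, the weight $\mathrm{dist}(x,\partial\Omega)^{-2s}$ is bounded below by a positive constant, and (\ref{sec2proposition1eq2}) follows at once. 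An equivalent route, avoiding Hardy, is to use the 1D embedding $H^s\hookrightarrow C^{0,s-1/2}$ (valid for $s>1/2$) together with the vanishing trace $w(a)=0$ inherited from $w\equiv 0$ on $(a-\delta,a)$, which gives $|w(x)|\le C(x-a)^{s-1/2}|w|_{H^s(\Omega)}$ and hence the $L^2$ bound after integrating in $x$.

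The main obstacle is the sharpened case $s>1/2$: once the exterior buffer is removed, one has to turn the qualitative vanishing of $w$ at $\partial\Omega$ into a quantitative control on $\|w\|_{L^2(\Omega)}$. This is exactly the content of the fractional Hardy inequality (or, equivalently, of the Hölder embedding for $H^s$), and its essential dependence on the threshold $s=1/2$ explains why the refinement cannot survive below that threshold. The generic case $s\in(0,1)$ is by contrast almost immediate thanks to the buffer $\Omega^*\setminus\Omega$.
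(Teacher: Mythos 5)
Your proposal is correct and follows essentially the same route as the paper: for general $s\in(0,1)$ it restricts the Slobodeckii integral over $\Omega^*$ to the region where one variable lies in the exterior buffer (where $w$ vanishes) and bounds the resulting kernel below by elementary monotonicity, and for $s>\tfrac12$ it invokes the fractional Hardy inequality together with $\mathrm{dist}(x,\partial\Omega)\le(b-a)/2$, exactly as in the paper's proof. The only cosmetic differences (using one buffer instead of two, the symmetry factor $2$, and the optional H\"older-embedding alternative) do not change the argument.
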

 \begin{proof}
According to the definition of the norm,
 \begin{eqnarray}
 && \left|w\right|^2_{H^s(\Omega^*)}
  =\int_{\Omega^*}\int_{\Omega^*}\frac{\left|w(x)-w(y)\right|^2}{\left|x-y\right|^{1+2s}}dy dx\nonumber\\
  &&~\ge \int_{\Omega^*}\left(\int_{a-\delta}^a \frac{\left|w(x)-w(y)\right|^2}{\left|x-y\right|^{1+2s}}dy
    +\int_b^{b+\delta}\frac{\left|w(x)-w(y)\right|^2}{\left|x-y\right|^{1+2s}}dy \right)dx\nonumber\\
  &&~=\int_{\Omega}w^2(x)\left(\int_{a-\delta}^a \frac{1}{\left|x-y\right|^{1+2s}}dy dx
    +\int_b^{b+\delta}\frac{1}{\left|x-y\right|^{1+2s}}dy \right)dx\nonumber\\
    &&=\frac{1}{2s}\int_{\Omega}w^2(x)\left(h_1(x)+h_2(x)\right)dx,
 \end{eqnarray}
 where
 \begin{eqnarray*}
 &&h_1(x)=(x-a)^{-2s}-(x-a+\delta)^{-2s},\\
 && h_2(x)=(b-x)^{-2s}-(b-x+\delta)^{-2s}.
 \end{eqnarray*}
 Note that
 \begin{eqnarray*}
 && h_1^\prime(x)=-2s\left((x-a)^{-2s-1}-(x-a+\delta)^{-2s-1}\right)<0,\\
 && h^\prime_2(x)=2s\left((b-x)^{-2s-1}-(b-x+\delta)^{-2s-1}\right)>0,
 \end{eqnarray*}
 for $x\in \Omega=(a,b)$. Then
 \begin{eqnarray}
 \left|w\right|^2_{H^s(\Omega^*)}\ge \frac{h_1(b)+h_2(a)}{2s}\left\|w\right\|^2_{L^2(\Omega)}.
 \end{eqnarray}

For (\ref{sec2proposition1eq2}), we can assume $w\in \widetilde{C}_0^\infty(\Omega)$, concluding  by density arguments.
 For any $w\in \widetilde{C}_0^\infty(\Omega)$ and $\frac{1}{2}<s<1$, we have the fractional Hardy inequality (see \cite[Theorem 2.6]{Loss:10})
  \begin{eqnarray}\label{hardyinequality2}
\int_\Omega \frac{w^2(x)}{{\rm dist}(x,\partial \Omega)^{2s}}dx \le C(\Omega,s)\int_a^b\int_a^b\frac{\left|w(x)-w(y)\right|^2}{|x-y|^{1+2s}}dydx,
\end{eqnarray}
 where ${\rm dist}(x,\partial \Omega):=\min\left\{(x-a),(b-x)\right\}$.
 By (\ref{hardyinequality2}), we have
 \begin{eqnarray}
 \left|w\right|_{H^s(\Omega)}^2&\ge &\frac{1}{C(\Omega,s)}\int_\Omega \frac{w^2(x)}{{\rm dist}(x,\partial \Omega)^{2s}}dx\nonumber\\
 &\ge&\frac{4^s}{C(\Omega,s)(b-a)^{2s}}\left\|w\right\|_{L^2(\Omega)}^2,
 \end{eqnarray}
 where ${\rm dist}(x,\partial \Omega)\le \frac{b-a}{2}$ has been used.
 \end{proof}
 \begin{remark}
For $s\in \left(0,\frac{1}{2} \right)$, because of the fractional Hardy inequality  (see \cite[Eq. 17]{Dyda:04})
 \begin{eqnarray}\label{hardyinequality1}
 ~~~~~\int_\Omega \frac{w^2(x)}{{\rm dist}(x,\partial \Omega)^{2s}}dx\le C(\Omega,s)\left(\int_\Omega\int_\Omega\frac{\left|w(x)-w(y)\right|^2}{|x-y|^{1+2s}}dydx+\left\|w\right\|^2_{L^2((a,b))}\right),
\end{eqnarray}
by using reduction to absurdity, one can show that (\ref{sec2proposition1eq2}) does not hold.
 \end{remark}

 \begin{proposition}\label{sec2proposition2}
 If $0<s<1, w\in  {\widetilde{H}_0^{s}(\Omega)}$, then for any given  real number $\delta>0$,
 \begin{eqnarray}
 \left|w\right|_{H^s(\mathbb{R})}\simeq\left|w\right|_{H^s(\Omega^*)}.
 \end{eqnarray}
 Moreover, for $\frac{1}{2}<s<1$, one actually has
 \begin{eqnarray}
 \left|w\right|_{H^s(\mathbb{R})}\simeq \left|w\right|_{H^s(\Omega)}.
 \end{eqnarray}
 \end{proposition}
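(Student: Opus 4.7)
The plan is to prove both equivalences by exploiting the fact that $w \equiv 0$ on $\mathbb{R}\setminus\Omega$, which allows us to collapse large portions of the double integral defining $|w|_{H^s(\mathbb{R})}^2$. In both cases one direction ($|w|_{H^s(\Omega^*)}\le|w|_{H^s(\mathbb{R})}$ and $|w|_{H^s(\Omega)}\le|w|_{H^s(\mathbb{R})}$) is immediate by monotonicity of the double integral in the domain, so the work is entirely in the reverse inequalities.

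For the first equivalence, I would split $\mathbb{R}\times\mathbb{R}$ into $\Omega^*\times\Omega^*$ and its complement. Since $\Omega\subset\Omega^*$ and $w$ vanishes outside $\Omega$, the integrand $(w(x)-w(y))^2$ vanishes on $(\mathbb{R}\setminus\Omega^*)\times(\mathbb{R}\setminus\Omega^*)$ and survives on the cross regions only when exactly one variable lies in $\Omega$. Using symmetry,
\begin{equation*}
|w|_{H^s(\mathbb{R})}^2 = |w|_{H^s(\Omega^*)}^2 + 2\int_{\Omega}\int_{\mathbb{R}\setminus\Omega^*}\frac{|w(x)|^2}{|x-y|^{1+2s}}\,dy\,dx.
\end{equation*}
The crucial geometric fact is that for $x\in\Omega$ and $y\notin\Omega^*$ one has $|x-y|\ge\delta$, so the inner integral is bounded by a constant $C(\delta,s)$ uniformly in $x$. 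Hence the correction is controlled by $\|w\|_{L^2(\Omega)}^2$, and Proposition \ref{sec2proposition1} (first part) absorbs this back into $|w|_{H^s(\Omega^*)}^2$.

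For the sharpened statement when $\tfrac{1}{2}<s<1$, I would repeat the same decomposition, but relative to $\Omega$ itself:
\begin{equation*}
|w|_{H^s(\mathbb{R})}^2 = |w|_{H^s(\Omega)}^2 + 2\int_{\Omega}\int_{\mathbb{R}\setminus\Omega}\frac{|w(x)|^2}{|x-y|^{1+2s}}\,dy\,dx.
\end{equation*}
The inner integral now evaluates explicitly to $\frac{1}{2s}\bigl((x-a)^{-2s}+(b-x)^{-2s}\bigr)$, which is comparable to $\mathrm{dist}(x,\partial\Omega)^{-2s}$. The fractional Hardy inequality (\ref{hardyinequality2}) then controls $\int_\Omega |w|^2\,\mathrm{dist}(x,\partial\Omega)^{-2s}\,dx$ by $|w|_{H^s(\Omega)}^2$, finishing the argument.

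The main obstacle, and the reason the two cases are separated, is that the boundary kernel $\mathrm{dist}(x,\partial\Omega)^{-2s}$ appearing in the $\Omega$-based decomposition is only tame enough to be bounded by the Gagliardo seminorm when $s>\tfrac{1}{2}$ (cf.\ the remark following Proposition \ref{sec2proposition1}); for smaller $s$ the Hardy inequality picks up an $\|w\|_{L^2}^2$ term that cannot be closed against $|w|_{H^s(\Omega)}^2$ alone. Passing to the enlarged interval $\Omega^*$ circumvents this precisely because the singularity of the kernel is replaced by the safe distance $\delta$, reducing the whole question to the Poincaré-type inequality of Proposition \ref{sec2proposition1}.
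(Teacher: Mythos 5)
Your proposal is correct and follows essentially the same route as the paper: the same splitting of the Gagliardo double integral into $\Omega^*\times\Omega^*$ (resp. $\Omega\times\Omega$) plus the cross term $2\int_\Omega\int_{\mathbb{R}\setminus\Omega^*}$ (resp. $2\int_\Omega\int_{\mathbb{R}\setminus\Omega}$), with the first case closed via the $\delta$-separation bound and Proposition \ref{sec2proposition1}, and the second via the explicit $\mathrm{dist}(x,\partial\Omega)^{-2s}$ kernel and the fractional Hardy inequality (\ref{hardyinequality2}). Your closing remark on why $s>\tfrac12$ is needed matches the paper's Remark following Proposition \ref{sec2proposition1}.
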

 \begin{proof}
 The equivalence of $\left|w\right|_{H^s(\mathbb{R})}$ and $\left|w\right|_{H^s(\Omega^*)}$  comes from the facts that $\left|w\right|_{H^s(\Omega^*)}\le \left|w\right|_{H^s(\mathbb{R})}$ and
 \begin{eqnarray}
 \left|w\right|^2_{H^s(\mathbb{R})}&\le& \int_{\Omega^*}\int_{\Omega^*}\frac{\left|w(x)-w(y)\right|^2}{\left|x-y\right|^{1+2s}}dydx
 +2\int_{\Omega}\int_{\mathbb{R}\backslash\Omega^*}\frac{w(x)^2}{\left|x-y\right|^{1+2s}} dy dx\nonumber\\
 &\le&\left|w\right|^2_{H^s(\Omega^*)}+2\left( \int_{-\infty}^{a-\delta}\frac{1}{(a-y)^{1+2s}}dy+\int_{b+\delta}^\infty\frac{1}{(y-b)^{1+2s}}dy  \right) \left\|w\right\|^2_{L^2(\Omega)}\nonumber\\
 &\le& \left|w\right|^2_{H^s(\Omega^*)}+\frac{2\delta^{-2s}}{s}\left\|w\right\|^2_{L^2(\Omega)}\nonumber\\
 &\le& C |w|^2 _{H^s(\Omega^*)}.
 \end{eqnarray}

When $\frac{1}{2}<s<1$, the equivalence of $\left|w\right|_{H^s(\mathbb{R})}$ and $\left|w\right|_{H^s(\Omega)}$  comes from the facts that $\left|w\right|_{H^s(\Omega)}\le \left|u\right|_{H^s(\mathbb{R})}$ and
 \begin{eqnarray}
 \left|w\right|^2_{H^s(\mathbb{R})}&\le&
\int_\Omega\int_\Omega\frac{\left|w(x)-w(y)\right|^2}{\left|x-y\right|^{1+2s}} dy dx+2\int_\Omega\int_{\mathbb{R}\backslash\Omega}\frac{w^2(x)}{\left|x-y\right|^{1+2s}}dydx\nonumber\\
&\le&\left|w\right|^2_{H^s(\Omega)}+ \frac{2}{s}                                                                                           \int_\Omega \frac{w^2(x)}{{\rm dist}(x,\partial \Omega)^{2s}}dx\nonumber\\
&\le& C\left|w\right|^2_{H^s(\Omega)}.
 \end{eqnarray}
\end{proof}

 \begin{theorem}\label{Laxgramlemma1}
 The weak formulation (\ref{weakformulae}) is well-posed, and $\left\|u\right\|_{H^{\beta/2}(\mathbb{R})}\lesssim \left\|f\right\|_{H^{-\beta/2}(\Omega)}$.
 \end{theorem}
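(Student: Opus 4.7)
The plan is to invoke the Lax--Milgram theorem on the Hilbert space $\widetilde{H}_0^{\beta/2}(\Omega)$, so I need to verify three things: continuity of $B(\cdot,\cdot)$, continuity of the right-hand side functional, and (most delicate) coercivity of $B$ with respect to the $\|\cdot\|_{H^{\beta/2}(\mathbb{R})}$ norm. Continuity of the linear functional is immediate from the definition of the dual norm: since $\widetilde{H}_0^{\beta/2}(\Omega)\hookrightarrow H_0^{\beta/2}(\Omega)$, one has $|\langle f,v\rangle|\le\|f\|_{H^{-\beta/2}(\Omega)}\|v\|_{H^{\beta/2}(\Omega)}\le\|f\|_{H^{-\beta/2}(\Omega)}\|v\|_{H^{\beta/2}(\mathbb{R})}$. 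Continuity of $B$ follows by dropping the favorable factor $e^{-\lambda|x-y|}\le 1$ and applying Cauchy--Schwarz in the doubled integral, giving $|B(p,v)|\le\tfrac{c_\beta}{2}|p|_{H^{\beta/2}(\mathbb{R})}|v|_{H^{\beta/2}(\mathbb{R})}$.

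The heart of the argument is coercivity, which is precisely the step the excerpt flagged as requiring ``extra efforts'' because the symbol $\mathcal{G}(\lambda,\xi,\beta)$ degenerates like $o(|\xi|^\beta)$ at the origin. My plan is to sidestep the Fourier route entirely and work directly with the bilinear form. For $p\in\widetilde{H}_0^{\beta/2}(\Omega)$, fix any $\delta>0$ and set $\Omega^*=(a-\delta,b+\delta)$. Since $|x-y|\le b-a+2\delta$ for $x,y\in\Omega^*$, the exponential is bounded below by $e^{-\lambda(b-a+2\delta)}>0$, so restricting the domain of integration gives
\begin{equation*}
B(p,p)\;\ge\;\frac{c_\beta}{2}e^{-\lambda(b-a+2\delta)}\int_{\Omega^*}\!\int_{\Omega^*}\frac{|p(x)-p(y)|^2}{|x-y|^{1+\beta}}\,dy\,dx\;=\;C_1\,|p|^2_{H^{\beta/2}(\Omega^*)}.
\end{equation*}
Proposition~\ref{sec2proposition2} now upgrades the Slobodeckii seminorm on $\Omega^*$ to the one on all of $\mathbb{R}$, giving $|p|^2_{H^{\beta/2}(\Omega^*)}\gtrsim|p|^2_{H^{\beta/2}(\mathbb{R})}$. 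To pick up the missing $L^2$ part of the full norm, I use Proposition~\ref{sec2proposition1} to obtain $\|p\|_{L^2(\Omega)}\lesssim|p|_{H^{\beta/2}(\Omega^*)}$, and since $p$ vanishes on $\mathbb{R}\setminus\Omega$ one has $\|p\|_{L^2(\mathbb{R})}=\|p\|_{L^2(\Omega)}$. Combining these inequalities yields $\|p\|^2_{H^{\beta/2}(\mathbb{R})}\lesssim B(p,p)$, which is the desired coercivity.

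With all three Lax--Milgram hypotheses in place, existence and uniqueness of $p\in\widetilde{H}_0^{\beta/2}(\Omega)$ follow, and the quantitative bound $\|p\|_{H^{\beta/2}(\mathbb{R})}\lesssim\|f\|_{H^{-\beta/2}(\Omega)}$ is obtained in the standard way by testing against $v=p$ and combining coercivity with continuity of the functional. The main obstacle, as anticipated, is the coercivity step: one could be tempted to work via the Fourier symbol $\mathcal{G}(\lambda,\xi,\beta)$, but the excerpt already observes that this route fails because $\mathcal{G}/|\xi|^\beta\to 0$ as $\xi\to 0$. Localizing the integral to $\Omega^*$ to gain a uniform lower bound on the tempering factor, then transferring seminorms back to $\mathbb{R}$ via the two propositions from Section~\ref{sec2}, is what makes the direct bilinear-form approach succeed.
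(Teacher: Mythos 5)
Your proposal is correct and follows essentially the same route as the paper: boundedness of $B$ via Cauchy--Schwarz after dropping $e^{-\lambda|x-y|}\le 1$, coercivity by restricting the double integral to $\Omega^*\times\Omega^*$ where the tempering factor is bounded below by $e^{-\lambda(b-a+2\delta)}$ and then invoking Propositions \ref{sec2proposition1} and \ref{sec2proposition2}, and finally Lax--Milgram. Your write-up merely makes explicit the recovery of the $L^2$ part of the norm (using that $p$ vanishes outside $\Omega$), which the paper leaves implicit.
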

 \begin{proof}
  For any $p,v\in {\widetilde{H}_0^{\beta/2}(\Omega)}$, by the Cauchy-Schwarz inequality and $0<e^{-\lambda|y|}\le1$, we have
 \begin{eqnarray}
&& \left|B(p,v)\right|\nonumber\\
&&~~~\le \frac{c_{\beta}}{2}\left(\int_{\mathbb{R}}\int_{\mathbb{R}}\frac{\left(p(x)-p(y)\right)^2}{e^{\lambda|x-y|}|x-y|^{1+\beta}}dydx\right)^{\frac{1}{2}}
 \left(\int_{\mathbb{R}}\int_{\mathbb{R}}\frac{\left(v(x)-v(y)\right)^2}{e^{\lambda|x-y|}|x-y|^{1+\beta}}dydx\right)^{\frac{1}{2}}\nonumber\\
&&~~~\le \frac{c_{\beta}}{2}\left|p\right|_{H^{\beta/2}(\mathbb{R})}\left|v\right|_{H^{\beta/2}(\mathbb{R})}.~~~~~
\label{uueeeeeffffff1}
 \end{eqnarray}

 By Propositions \ref{sec2proposition1} and \ref{sec2proposition2}, we have
\begin{eqnarray}
 B(p,p)&\ge& \frac{c_{\beta}}{2}\int_{{\Omega}^*}\int_{{\Omega}^*}\frac{\left(p(x)-p(y)\right)^2}{e^{\lambda|x-y|}|x-y|^{1+\beta}}dydx\nonumber\\
 &\ge&\frac{c_{\beta}}{2} e^{-\lambda(b-a+2\delta)}\left|p\right|_{H^{\beta/2}(\Omega^*)}\nonumber\\
 &\ge& C\left\|p\right\|^2_{H^{\beta/2}(\mathbb{R})}.\label{uueeeeeffffff2}
 \end{eqnarray}

In addition,
 \begin{eqnarray}
 \left|\left\langle f,v\right\rangle\right|\le \left\|f\right\|_{H^{-\beta/2}(\Omega)}\left\|v\right\|_{H^{\beta/2}(\Omega)}\le \left\|f\right\|_{H^{-\beta/2}(\Omega)}\left\|v\right\|_{H^{\beta/2}(\mathbb{R})}.
 \end{eqnarray}

 Therefore, by the Lax-Milgram Theorem, the problem (\ref{weakformulae}) has an unique solution.
 \end{proof}

\section{Riesz basis Galerkin approximation}\label{sec4}
In this section, we propose the Galerkin approximation of (\ref{weakformulae}) with error analysis.
Without loss of generality, in the following, we take $\Omega:=(0,1)$.
\subsection{Single scaling B-spline and multiscale  Reisz basis functions}

To develop the numerical approximation of (\ref{weakformulae}), we need to choose the appropriate finite dimensional subspace of
 $\widetilde{H}_0^{\beta/2}(\Omega)$. Here, we use the spline wavelet spaces introduced in  \cite{Jia:09}. Let $M_m\, (m\in \mathbb{N}^+)$ be the B-spline of order $m$, i.e, for $x\in \mathbb{R}$,
\begin{eqnarray}
M_1(x)=\chi_{[0,1]}= \left\{
\begin{array}{l}
1,~~~ {\rm if}~ x \in [0,1], \\

0,~~~ {\rm otherwise},
\end{array}
 \right.~~M_m(x)=\int_0^1M_{m-1}(x-t)dt.
\end{eqnarray}
Then $M_m$ is supported on $[0,m],\,M_m>0$ for $x\in (0,m)$, and $M_m$  satisfies the following refinement equation \cite{Urban:09}
\begin{eqnarray}\label{refinerelation1}
M_m(x)=2^{1-m}\sum_{k=0}^m\binom{m}{k}M_m(2x-k).
\end{eqnarray}
Moreover, $\mathscr{F}[M_m(x)](\xi)=\left(\frac{1-e^{i\xi}}{i\xi}\right)^m\,(\xi\in\mathbb{R})$, and $M_m\in\widetilde{H}_0^{\mu}(0,m)$ for $0<\mu<m-\frac{1}{2}$. In this paper, we focus on the cases of $m=1$ and $m=2$.

Let $r=1$ or $r=2$, and $n_0$ be the least integer such that $2^{n_0}\ge 2r$. For $j\in \mathbb{Z}$, denote
\begin{eqnarray}
\phi^r_{n,j}(x):=2^{n/2}M_r(2^nx-j),~~ x\in \mathbb{R}.
\end{eqnarray}
If $n\ge n_0$ and $j\in I_n:=\left\{0,1,\ldots,2^n-r\right\}$, then $\phi^r_{n,j}(x)=0$ for $x\in \mathbb{R}/[0,1]$,
and $V_n:={\rm span}\left\{\phi^r_{n,j}: j\in I_n\right\}$ is a subspace of $\widetilde{H}_0^{\mu}(\Omega)$ for $\mu\in [0,r-\frac{1}{2})$.
 Moreover, the sequence $\left\{V_n\right\}_{n\ge n_0}$ is a multiresolution analysis (MRA) of $\widetilde{L}_2\left(\Omega\right)$, i.e.,
\begin{itemize}
\item $V_{n-1}\subset V_n$ for all $n\ge n_0$;
\item $\cup_{n=n_0}^{\infty}V_n$ is dense in $\widetilde{L}^2(\Omega)$ (in fact, by  \cite[Theorem 5]{Jia:09}, $\cup_{n=n_0}^{\infty}V_n$ also
is dense in $\widetilde{H}^{\mu}(\Omega)$ for $\mu\in (0,r-\frac{1}{2})$;
\item For all $n\ge n_0$ there exist constants $0<c_1\le c_2<\infty$ independent of $n$, such that the set
$\Phi^r_n:=\left\{\phi^r_{n,j}: j\in I_n\right\}$ forms a Riesz basis of $V_n$, i.e., for all sequences
    \begin{eqnarray*}
    {\bf d}=\left\{d_{n,0}, d_{n,1},\cdots, d_{n,2^n-r}\right\}
    \end{eqnarray*}
    we have
    \begin{eqnarray}\label{Rieszconstant}
    c_1\sum_{j\in I_n}|d_{n,j}|^2\le \big\|\sum_{j\in I_n}d_{n,j} \phi^r_{n,j}\big\|^2_{L^2(\mathbb{R})}\le c_2\sum_{j\in I_n}|d_{n,j}|^2.
    \end{eqnarray}
    \end{itemize}

For $n\ge n_0$, the nest property of $V_n$ allows one to construct the spaces $W_n:=V_{n+1}\cap V_n^{\perp}$
satisfying $V_{n+1}=V_n\oplus W_n$. More precisely, let $J_n:=\left\{1,\cdots,2^n\right\}$; for $r=1$,  defining
\begin{eqnarray}\label{refinerelation2}
\psi(x):=\frac{1}{2}\left(M_1(2x)-M_1(2x-1)\right),~~~\psi^1_{n,j}(x)=2^{n/2}\psi(2^nx-j+1),
\end{eqnarray}
and $\Psi^1_n=\left\{\psi^1_{n,j}(x),\, j\in J_n\right\}$, then $W_n={\rm span}\left\{\Psi^1_n\right\}$; for $r=2$, defining
\begin{eqnarray}
&&\psi(x)=\frac{1}{24}M_2(2x)-\frac{1}{4}M_2(2x-1)+\frac{5}{12}M_2(2x-2)-\frac{1}{4}M_2(2x-3)\nonumber\\
&&~~~~~~~~~~~+\frac{1}{24}M_2(2x-4),\label{refinerelation3}\\
&&\psi_1(x)=\frac{3}{8}M_2(2x)-\frac{1}{4}M_2(2x-1)+\frac{1}{24}M_2(2x-2),\label{refinerelation4}\\
&&\psi^2_{n,j}(x)=
\left\{
\begin{array}{lll}
2^{n/2}\psi_1(2^nx),& j=1,\\2^{n/2}\psi(2^nx-j+2),&j=2,\cdots,2^n-1,\\2^{n/2}\psi_1(2^n(1-x)),&j=2^n,
\end{array}
\right.\label{refinerelation5}
\end{eqnarray}
and $\Psi^2_n=\left\{\psi^2_{n,j}(x),\, j\in J_n\right\}$, then $W_n={\rm span}\left\{\Psi^2_n\right\}$.
\begin{remark}\label{remarkprojection}
Here, the cases  for $r=1$  and $r=2$ are obtained by letting $r=s=1$ and $r=s=2$ in \cite[pp. 179-181]{Jia:09}, respectively.
\end{remark}
Because of the property of MRA, we have $\widetilde {L}^2(\Omega)=V_{n_0}\oplus \oplus_{n=n_0}^{\infty} W_n$.
Therefore, $\Phi_{n_0}\cup\cup_{n_0}^\infty \Psi_{n}$ is a new basis of $\widetilde{L}^2(\Omega)$, called multiscale basis.
\begin{lemma}[Theorems $1$ and $2$ of \cite{Jia:09}]\label{Riezddddfff}
For $n\ge n_0$, and $j\in J_n$, let $\psi^r_{n,j}$ be the functions as constructed above. Then
\begin{eqnarray} \label{reizedewddd1}
\left\{2^{-n_0\mu}\phi^r_{n_0,j}: j\in I_{n_0}\right\}\cup \cup_{n=n_0}^\infty\left\{2^{-n\mu}\psi^r_{n,j}: j\in J_n\right\}
\end{eqnarray}
forms a Riesz basis of $\widetilde{H}_0^{\mu}(\Omega)$ for $0\le \mu<r-\frac{1}{2}$.
\end{lemma}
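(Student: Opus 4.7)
The plan is to establish a dyadic norm equivalence
\[
\|f\|_{\tilde H^\mu_0(\Omega)}^2 \simeq \|P_{n_0} f\|_{L^2}^2 + \sum_{n \ge n_0} 2^{2n\mu} \|Q_n f\|_{L^2}^2,
\]
where $P_{n_0}$ denotes the $L^2$-projection onto $V_{n_0}$ and $Q_n$ the $L^2$-projection onto $W_n$, and then combine this with the uniform-in-$n$ single-level Riesz property (\ref{Rieszconstant}) and its analogue for $\Psi_n^r$. Reading this identity with coefficients scaled by $2^{-n\mu}$ will recover exactly the Riesz-basis statement (\ref{reizedewddd1}).

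First I would verify that each family $\Psi_n^r$ is itself a Riesz basis of $W_n$ with constants independent of $n$. The interior functions are dilates and translates of a single mother wavelet $\psi$, whose shift-invariant Riesz property is standard via the Fourier symbol of the mask appearing in (\ref{refinerelation2})--(\ref{refinerelation5}); in the $r=2$ case the two boundary-adapted functions $\psi_1$ only perturb the Gram matrix in finitely many entries at each scale, so a rescaling from a reference scale together with a perturbation/compactness argument transfers the bounds uniformly. This yields $\|Q_n f\|_{L^2}^2 \simeq \sum_{j \in J_n} |c_{n,j}|^2$ for $Q_n f = \sum_j c_{n,j}\psi_{n,j}^r$, and similarly for $P_{n_0}f$ in the $\phi_{n_0,j}^r$ basis.

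Next I would prove Jackson- and Bernstein-type inequalities adapted to the MRA. The Jackson bound $\inf_{g\in V_n}\|f-g\|_{L^2(\mathbb{R})}\lesssim 2^{-n\mu}|f|_{H^\mu(\mathbb{R})}$ is obtained from polynomial reproduction of order $r$ by $M_r$, applied to a quasi-interpolant onto $V_n$, combined with real interpolation between $L^2$ and a higher Sobolev space within the regularity range of $M_r$. The hypothesis $\mu < r-\tfrac12$ is exactly what is needed to guarantee $M_r\in\tilde H^\mu_0(0,r)$, so that $V_n\subset\tilde H^\mu_0(\Omega)$. The Bernstein bound $|g|_{H^\mu(\mathbb{R})}\lesssim 2^{n\mu}\|g\|_{L^2(\mathbb{R})}$ for $g\in V_n$ follows by change of variables from the fixed $H^\mu$-norm of a single spline, using the refinement relation (\ref{refinerelation1}) to pass between scales. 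A standard Dahmen--Oswald interpolation argument then converts the pair of inequalities into the dyadic square-function equivalence displayed above.

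Finally, inserting the single-level Riesz estimates into the square-function equivalence produces
\[
\Big\|\sum_{j\in I_{n_0}} a_j\, 2^{-n_0\mu}\phi^r_{n_0,j}+\sum_{n\ge n_0}\sum_{j\in J_n} b_{n,j}\,2^{-n\mu}\psi^r_{n,j}\Big\|_{\tilde H^\mu_0(\Omega)}^2 \simeq \sum_{j\in I_{n_0}} |a_j|^2 + \sum_{n\ge n_0}\sum_{j\in J_n}|b_{n,j}|^2,
\]
which is the desired Riesz-basis condition. The main obstacle I anticipate is making all constants uniform in $n$ while simultaneously (i) handling the boundary-adapted wavelet $\psi_1$, whose support touches $\partial\Omega$ and whose refinement mask differs from the interior one, and (ii) controlling the Jackson and Bernstein estimates as $\mu$ approaches the critical exponent $r-\tfrac12$; once these are settled, the rest of the argument is routine linear combination of the pieces.
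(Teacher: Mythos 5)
The paper does not actually prove this lemma: it is quoted verbatim from Theorems 1 and 2 of \cite{Jia:09}, so there is no in-paper argument to compare against. Your sketch reconstructs the standard route that underlies that reference --- uniform single-level Riesz stability of $\Phi_{n_0}^r$ and $\Psi_n^r$, Jackson and Bernstein estimates for the spline spaces $V_n$, and the resulting dyadic norm equivalence $\|f\|_{\widetilde{H}_0^{\mu}(\Omega)}^2\simeq\|P_{n_0}f\|_{L^2}^2+\sum_{n\ge n_0}2^{2n\mu}\|Q_nf\|_{L^2}^2$ --- so in substance you are re-deriving the cited result rather than taking a different road. Two points carry essentially all of the weight and are exactly where your sketch is thinnest. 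First, the uniform-in-$n$ Riesz stability of $\Psi_n^2$ including the two boundary-adapted functions built from $\psi_1$ in (\ref{refinerelation4})--(\ref{refinerelation5}): a ``perturbation/compactness argument'' is not yet a proof, since the boundary entries of the Gram matrix do not shrink with $n$ (they are scale-invariant after rescaling), and one must check directly, as \cite{Jia:09} does, that the fixed finite Gram block together with the shift-invariant interior symbol yields level-independent constants. Second, the interpolation step: to convert Jackson and Bernstein bounds into the square-function characterization of $\widetilde{H}_0^{\mu}(\Omega)$ you must identify $\widetilde{H}_0^{\mu}(\Omega)$ with the relevant interpolation space between $\widetilde{L}^2(\Omega)$ and a higher-order space in the scale; for $r=2$ the admissible range crosses $\mu=\tfrac12$, where the interpolation space is of Lions--Magenes type ($H^{1/2}_{00}$) rather than $H^{1/2}_0(\Omega)$, and it is precisely the definition of $\widetilde{H}_0^{\mu}(\Omega)$ as a closure in $H^{\mu}(\mathbb{R})$ that makes the identification true --- this needs to be stated and justified, otherwise the equivalence claimed in your second display is not established at that exponent. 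With those two points made precise, your argument is complete and coincides with the one the paper delegates to \cite{Jia:09}.
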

By Lemma \ref{Riezddddfff}, for $\mu \in [0,r-\frac{1}{2})$,  it holds that
 \begin{eqnarray}
 \left\|\phi_{n_0,j}^r\right\|_{H^{\mu}(\mathbb{R})}\simeq 2^{{n_0}\mu},~~j\in I_n; ~~ \left\|\psi_{n,j}^r\right\|_{H^{\mu}(\mathbb{R})}\simeq 2^{n\mu},~~j\in J_n,~ n\ge n_0.
 \end{eqnarray}
  Then for $n\ge n_0$, by (\ref{uueeeeeffffff1}) and (\ref{uueeeeeffffff2}), we know that the set
\begin{eqnarray} \label{reizedewddd2}
 \begin{array}{l}
 \left\{\frac{\phi^r_{n_0,j}}{\sqrt{B(\phi^r_{n_0,0},\phi^r_{n_0,0})}}: j\in I_{n_0}\right\}\\
~\bigcup \bigcup_{l=n_0}^\infty\left\{\frac{\psi^r_{l,1}}{\sqrt{B(\psi^r_{l,1},\psi^r_{l,1})}},
\frac{\psi^r_{l,j}}{\sqrt{B(\psi^r_{l,2},\psi^r_{l,2})}}\left|_{j=2}^{2^l-1}\right., \frac{\psi^r_{l,2^l}}{\sqrt{B(\psi^r_{l,1},\psi^r_{l,1})}}\right\}
\end{array}
\end{eqnarray}
also forms a Riesz basis of $\widetilde{H}^{\beta/2}_0(\Omega)$. Here $\beta\in (0,1)$ for $r=1$, and $\beta\in (0,2)$ for $r=2$.

We take the subspace $V_n$ as the approximation space of $\widetilde{H}_0^{\beta/2}(\Omega)$, that is, find $p_n\in V_n$ such that
\begin{eqnarray}\label{approximationsolution}
B(p_n,v_n)=(f,v_n) ~~~\forall v_n\in V_n.
\end{eqnarray}
Note that the space $V_n$ generated by $M_1(x)$  is a subspace of $\widetilde{H}^{\beta/2}_0(\Omega)$  only for $0<\beta<1$.

\subsection{Convergence analysis}
 In the following, we give convergence analysis.

\begin{proposition}\label{section4lemma1}
 For  $w\in H^\alpha(\Omega)\cap  \widetilde{H}_0^{\beta/2}(\Omega) \,(\alpha\ge\beta/2)$ and the orthogonal projection operator $P_n$ from $\widetilde{L}^2(\Omega)$ to $V_n$,  it holds that
 \begin{eqnarray}
 \left\|w-P_nw\right\|_{H^{\beta/2}(\mathbb{R})}\lesssim 2^{-n\left(\alpha-\beta/2\right)}\left\|w\right\|_{H^\alpha(\Omega)},
 \end{eqnarray}
 where $0<\beta<1$ and $\beta/2\le\alpha\le1$ if $V_n$ is generated from $M_1(x)$, and $0<\beta<2$ and $\beta/2\le\alpha\le2$ if $V_n$ is generated from $M_2(x)$.
 \end{proposition}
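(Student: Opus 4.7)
The plan is to combine the multiresolution structure of $\{V_n\}$ with the Riesz basis characterization of the fractional norm furnished by Lemma~\ref{Riezddddfff} and a standard Jackson-type $L^2$ estimate for B-spline approximation. First, introduce the $L^2$-orthogonal decomposition $V_{l+1}=V_l\oplus W_l$, and denote by $Q_l:=P_{l+1}-P_l$ the orthogonal projection onto $W_l$. Since $V_n=V_{n_0}\oplus\bigoplus_{l=n_0}^{n-1}W_l$ and $\bigcup_{n\ge n_0}V_n$ is dense in $\widetilde L^2(\Omega)$, every $w\in\widetilde L^2(\Omega)$ admits the telescoping expansion
$$w-P_n w=\sum_{l=n}^{\infty}Q_l w.$$

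Next, apply Lemma~\ref{Riezddddfff} with $\mu=\beta/2$, which is admissible under both hypotheses on $(r,\beta)$ since $\beta/2<r-\tfrac12$. Combined with the $\mu=0$ case (the Riesz basis in $\widetilde L^2(\Omega)$), this yields the norm equivalence
$$\|w-P_n w\|_{H^{\beta/2}(\mathbb R)}^{2}\simeq\sum_{l=n}^{\infty}2^{l\beta}\,\|Q_l w\|_{L^2(\mathbb R)}^{2}.$$
The key analytic input is now a Jackson-type estimate: for B-splines of order $r$ on the uniform mesh of size $h=2^{-l}$, a quasi-interpolation argument (using polynomial reproduction of degree $<r$ on each cell together with a suitable handling of boundary splines so that zero boundary values are preserved) gives
$$\inf_{v\in V_l}\|w-v\|_{L^2(\Omega)}\lesssim 2^{-l\alpha}\|w\|_{H^{\alpha}(\Omega)},\qquad \beta/2\le\alpha\le r,$$
for every $w\in H^{\alpha}(\Omega)\cap\widetilde H^{\beta/2}_0(\Omega)$. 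Since $P_l$ is the best $L^2(\mathbb R)$-approximation in $V_l$ and $w$ is supported in $\overline\Omega$, the triangle inequality gives
$$\|Q_l w\|_{L^2(\mathbb R)}\le \|w-P_l w\|_{L^2(\mathbb R)}+\|w-P_{l+1}w\|_{L^2(\mathbb R)}\lesssim 2^{-l\alpha}\|w\|_{H^{\alpha}(\Omega)}.$$

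Finally, substitute this into the norm-equivalence expression. For $\alpha>\beta/2$ the resulting geometric series converges, and
$$\sum_{l=n}^{\infty}2^{l\beta}\,2^{-2l\alpha}\,\|w\|_{H^{\alpha}(\Omega)}^{2}\lesssim 2^{-n(2\alpha-\beta)}\|w\|_{H^{\alpha}(\Omega)}^{2},$$
so that taking square roots yields the claimed bound; the borderline case $\alpha=\beta/2$ is an immediate consequence of the $H^{\beta/2}$-stability of $P_n$, which again follows from the same Riesz basis norm equivalence.

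The main obstacle is the Jackson estimate on the bounded interval with zero extension: in the interior, polynomial reproduction by the translated B-splines delivers the optimal local $O(h^\alpha)$ error via a Bramble--Hilbert type argument, but near $\partial\Omega$ one must exploit that $w$ vanishes outside $\Omega$ together with the structure of the boundary B-splines in $V_l$ so that no loss of order is incurred. Once this global $L^2$-approximation rate is in hand, the remainder of the proof is a purely algebraic manipulation of the Riesz basis characterization.
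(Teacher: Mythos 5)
Your proposal takes essentially the same route as the paper: a telescoping multiresolution decomposition $w-P_nw=\sum_{l\ge n}(P_{l+1}-P_l)w$, the Riesz-basis norm equivalence of Lemma \ref{Riezddddfff} to pass between $H^{\beta/2}(\mathbb{R})$ and weighted $L^2$ norms of the detail terms (the paper phrases this as the inverse estimate (\ref{inverestimatieerd}) applied termwise rather than the squared tail equivalence), the $L^2$ direct estimates for $P_l$, and a geometric summation. The Jackson-type estimate you single out as the main obstacle is exactly the ingredient the paper disposes of by citation ((\ref{directestimate1}) via \cite{Urban:09} for $r=1$, and quasi-optimality of $P_n$ together with \cite[Lemma 5]{Zhang:17} for $r=2$), and your separate treatment of the borderline case $\alpha=\beta/2$ via uniform $H^{\beta/2}$-stability of $P_n$ is in fact slightly more careful than the paper's final series bound, which implicitly requires $\alpha>\beta/2$.
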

 \begin{proof}
For $w\in \widetilde{L}^2(\Omega)$ and $n\ge n_0$, let $P_nf$ be the  orthogonal projection from $\widetilde{L}^2(\Omega)$ to $V_n$, i.e.,
\begin{eqnarray}
\langle P_n w, \phi^{r}_{n,k}\rangle=\langle w, \phi^{r}_{n,k}\rangle ~~~\forall k\in I_n.
\end{eqnarray}
By Remark \ref{remarkprojection}, it is easily seen that $P_n$  actually is a special case of the projector $P_n$ defined in \cite[pp. 197]{Jia:09}.
Then $\left\|P_n\right\|:={\rm sup}_{w\in \widetilde{L}^2(\Omega), \left\|w\right\|_{L^2(\mathbb{R})}\le 1}\left\|P_nw\right\|_{L^2(\mathbb{R})}$
is bounded by a constant independent of $n$; $P_{n+1}w-P_nw$ lies in $V_{n+1}\cap V_n^{\perp}=W_n$; and $\lim_{n\to \infty}\left\|P_nw-w\right\|_{L^2(\mathbb{R})}$ $=0$.
Combining with Lemma \ref{Riezddddfff}, for any $w\in \widetilde{L}^2(\Omega)$, we have
\begin{eqnarray}
~~~w=P_{n_0}w+\sum_{n=n_0}^{\infty} \left(P_{n+1}w-P_nw\right)=\sum_{j\in I_{n_0}}d_{n_0,j}\phi^r_{n_0,j}
+\sum_{n=n_0}^\infty\sum_{j\in J_n} c_{n,j}\psi^r_{n,j},~~
\end{eqnarray}
and
\begin{eqnarray}\label{normequaeeett}
\left\|w\right\|^2_{H^\mu(\mathbb{R})}&\simeq& \sum_{j\in I_{n_0}}\left|2^{n_0\mu}d_{n_0,j}\right|^2
+\sum_{n=n_0}^{\infty}\sum_{j\in J_n}\left|2^{n\mu}c_{n,j}\right|^2\nonumber\\
&\simeq& 2^{2n_0\mu}\left\|P_{n_0}w\right\|^2_{L^2(\mathbb{R})}
+\sum_{n= n_0}^{\infty} 2^{2n\mu}\left\|(P_{n+1}-P_n)w\right\|_{L^2(\mathbb{R})}^2
\end{eqnarray}
if $w\in \widetilde{H}_0^{\mu}(\Omega),\, \mu\in [0,r-\frac{1}{2})$ further.

Firstly, it is easy to check that $P_lP_n=P_l$ for all $n_0\le l\le n$. Thus, for $w_n\in V_n$ , we have $w_n=\sum_{l=n_0}^{n}\left(P_{l}-P_{l-1}\right)w_n$ with $P_{n_0-1}:=0$.  By (\ref{normequaeeett}) and the uniform boundedness of $\left\|P_n\right\|$, for $\mu\in [0,r-\frac{1}{2})$, it holds that
 \begin{eqnarray}\label{inverestimatieerd}
&&\left\|w_n\right\|^2_{H^{\mu}(\mathbb{R})}\lesssim \sum_{l=n_0}^n 2^{2l\mu}\left\|\left(P_{l}-P_{l-1}\right)w_n\right\|^2_{L^2(\mathbb{R})}\nonumber\\
&&~~~~~~~~~~~~~~\lesssim\left(\sum_{l=n_0}^n 2^{2l\mu}\right)\left\|w_n\right\|_{L^2(\mathbb{R})}^2\lesssim 2^{2n\mu}\left\|w_n\right\|_{L^2(\Omega)}^2.
\end{eqnarray}

Secondly, for $r=1$,
we have \cite[pp. 13-16]{Urban:09}
\begin{eqnarray}\label{directestimate1}
\left\|w-P_n w\right\|_{L^2(\Omega)}\lesssim 2^{-n\alpha}\left\|w\right\|_{H^\alpha(\Omega)},~~0\le \alpha\le 1;
\end{eqnarray}
for $r=2$, since $V_n\left|_{\Omega}\right.$ actually is the space $S_j$  with $d=2$ in \cite[Lemma 5]{Zhang:17}, we have
\begin{eqnarray}\label{directestimate2}
&&\left\|w-P_nw\right\|_{L^2(\Omega)}\le(1+\sup_{l\ge n_0}\left\|P_l\right\|)\inf_{g\in V_n\left|_{\Omega}\right.} \left\|w-g\right\|_{L^2(\Omega)}\nonumber\\
&&~~~~~~~~~~~~~~~~~~~~~\lesssim 2^{-n\alpha}\left\|w\right\|_{H^\alpha(\Omega)},~~0\le \alpha\le 2.
\end{eqnarray}

Finally,  for any $w\in H^\alpha(\Omega)\cap  \widetilde{H}_0^{\beta/2}(\Omega) \,(\alpha\ge\beta/2)$, it holds that
$w=P_nw+\sum_{l\ge n} \left(P_{l+1}-P_l\right)w;$
by (\ref{inverestimatieerd}), (\ref{directestimate1}), and (\ref{directestimate2}), we have
\begin{eqnarray}
&&\left\|w-P_nw\right\|_{H^{\beta/2}(\mathbb{R})}\le \sum_{l\ge n}\left\|P_{l+1}w-P_l w\right\|_{H^{\beta/2}(\mathbb{R})}\nonumber\\
&&~~~\lesssim  \sum_{l\ge n}2^{\beta l/2}\left(\left\|P_{l+1}w-w\right\|_{L^2(\Omega)}+\left\|w-P_l w\right\|_{L^2(\Omega)}\right)\nonumber\\
&&~~~\lesssim   \sum_{l\ge n}2^{(\beta /2-\alpha)l}\left\|w\right\|_{H^\alpha(\Omega)}\lesssim 2^{-n(\alpha-\beta/2)}\left\|w \right\|_{H^\alpha(\Omega)}.
\end{eqnarray}
Thus, we complete the proof.
\end{proof}

\begin{theorem}\label{errorestimate}
 Let  $p\in H^\mu(\Omega)\cap \widetilde{H}_0^{\beta/2}(\Omega) \,(\mu\ge\beta/2)$ be the exact solution of  (\ref{weakformulae}) and $p_n\in V_n$ be the  approximation solution of (\ref{approximationsolution}). Then
 \begin{eqnarray}
 \left\|p-p_n\right\|_{H^{\beta/2}(\mathbb{R})}\lesssim 2^{-n\left(\min\{\mu,r\}-\beta/2\right)}\left\|p\right\|_{H^\mu(\Omega)},
\end{eqnarray}
where $\beta\in (0,1)$ if $V_n$ is generated from $M_1(x)$, and  $\beta\in (0,2)$ if $V_n$ is generated from $M_2(x)$.
 \end{theorem}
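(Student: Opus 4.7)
The plan is to reduce the error estimate to a best-approximation bound via Céa's lemma and then invoke Proposition \ref{section4lemma1}.

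First I would establish Galerkin orthogonality. Because $V_n\subset \widetilde{H}_0^{\beta/2}(\Omega)$ for the admissible range of $\beta$, subtracting (\ref{approximationsolution}) from (\ref{weakformulae}) restricted to test functions in $V_n$ gives $B(p-p_n,v_n)=0$ for every $v_n\in V_n$. Combining this with the continuity bound (\ref{uueeeeeffffff1}) and the coercivity bound (\ref{uueeeeeffffff2}) from the proof of Theorem \ref{Laxgramlemma1}, for any $v_n\in V_n$ I would estimate
\begin{equation*}
C\|p-p_n\|_{H^{\beta/2}(\mathbb{R})}^2 \le B(p-p_n,p-p_n) = B(p-p_n,p-v_n) \lesssim \|p-p_n\|_{H^{\beta/2}(\mathbb{R})}\,\|p-v_n\|_{H^{\beta/2}(\mathbb{R})},
\end{equation*}
whence $\|p-p_n\|_{H^{\beta/2}(\mathbb{R})}\lesssim \inf_{v_n\in V_n}\|p-v_n\|_{H^{\beta/2}(\mathbb{R})}$.

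Second I would take $v_n=P_n p$, where $P_n$ is the $\widetilde{L}^2(\Omega)$-orthogonal projector onto $V_n$ of Proposition \ref{section4lemma1}, and split into two regularity regimes. If $\beta/2\le\mu\le r$, Proposition \ref{section4lemma1} applies directly with $\alpha=\mu$ and gives $\|p-P_np\|_{H^{\beta/2}(\mathbb{R})}\lesssim 2^{-n(\mu-\beta/2)}\|p\|_{H^\mu(\Omega)}$. If instead $\mu>r$, then $H^\mu(\Omega)\hookrightarrow H^r(\Omega)$ and applying Proposition \ref{section4lemma1} with $\alpha=r$ yields $\|p-P_np\|_{H^{\beta/2}(\mathbb{R})}\lesssim 2^{-n(r-\beta/2)}\|p\|_{H^r(\Omega)}\lesssim 2^{-n(r-\beta/2)}\|p\|_{H^\mu(\Omega)}$. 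Together the two cases produce the claimed exponent $\min\{\mu,r\}-\beta/2$.

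There is no real obstacle: the only subtlety is that Proposition \ref{section4lemma1} caps the approximation order at $r$, which is exactly what forces the saturation $\min\{\mu,r\}$ in the rate; the restrictions $\beta\in(0,1)$ for $r=1$ and $\beta\in(0,2)$ for $r=2$ correspond to the condition $\beta/2<r-1/2$ required for $V_n\subset\widetilde{H}_0^{\beta/2}(\Omega)$ (so that Céa's lemma is legitimate) and for the Riesz basis characterization underlying the proposition.
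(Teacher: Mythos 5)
Your proposal is correct and follows essentially the same route as the paper: the paper invokes the standard C\'ea's lemma argument (citing Brenner--Scott) to reduce to the best-approximation error in $V_n$ and then applies Proposition \ref{section4lemma1}. Your write-up merely makes explicit two details the paper leaves implicit, namely the Galerkin orthogonality with the continuity/coercivity bounds (\ref{uueeeeeffffff1})--(\ref{uueeeeeffffff2}) and the case split $\alpha=\mu$ versus $\alpha=r$ that produces the saturation $\min\{\mu,r\}$.
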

  \begin{proof}
  Using the standard argument technique for   C{\'e}a's lemma (see Theorem (2.8.1) of \cite{Brenner:02}), we have
 \begin{eqnarray}
 \left\|p-p_n\right\|_{H^{\beta/2}(\mathbb{R})}\lesssim\inf_{v\in V_n}\left\|p-v\right\|_{H^{{\beta}/{2}}(\mathbb{R})}.
\end{eqnarray}
Then the desired result is a direct conclusion of  Proposition \ref{section4lemma1}.
\end{proof}

\section{Implementation details}\label{sec5}
 It is easy to check that $V_n=V_{n_0}\oplus \oplus_{j=n_0}^{n-1} W_j$. Then  $V_n$ has two types of  basis functions:
the single scaling B-spline basis functions ${\Psi}^r_n$ and the multiscale Reisz basis functions
\begin{eqnarray}\label{multiscalebases2}
 \begin{array}{l}
\widetilde{\Psi}^r_n:= \left\{\frac{\phi^r_{n_0,j}}{\sqrt{B(\phi^r_{n_0,0},\phi^r_{n_0,0})}}: j\in I_{n_0}\right\}\\
~~~~~~~~\bigcup \bigcup_{l=n_0}^{n-1}\left\{\frac{\psi^r_{l,1}}{\sqrt{B(\psi^r_{l,1},\psi^r_{l,1})}},
\frac{\psi^r_{l,j}}{\sqrt{B(\psi^r_{l,2},\psi^r_{l,2})}}\left|_{j=2}^{2^l-1}\right., \frac{\psi^r_{l,2^l}}{\sqrt{B(\psi^r_{l,1},\psi^r_{l,1})}}\right\}.
\end{array}
\end{eqnarray}

\subsection{Computing the stiffness matrix}
We  first consider the stiffness matrix ${\bf A}:=B\left(\Phi_n^r,\Phi_n^r\right)$ of single scaling basis functions.
Making use of the fact that $\Phi_n^r$ are obtained from the translations of a single function $M_r(x)$, we have
\begin{proposition}\label{Teoplitzstruch}
 $ B(\Phi^r_n,\Phi^r_n)$  is a symmetric Toeplitz matrix.
 \end{proposition}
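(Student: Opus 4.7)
The plan is to verify two properties separately: symmetry of the matrix $\mathbf A = (A_{j,k})_{j,k\in I_n}$ with $A_{j,k} = B(\phi^r_{n,j},\phi^r_{n,k})$, and the Toeplitz structure, i.e.\ $A_{j,k}$ depends on $j$ and $k$ only through $j-k$.

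First I would dispatch the symmetry directly from the definition of the bilinear form in (\ref{bilinearformeddee5555}). The integrand is invariant under the simultaneous swap $p\leftrightarrow v$ because it is a product of two identical difference factors paired with a kernel $1/(e^{\lambda|x-y|}|x-y|^{1+\beta})$ that makes no distinction between its two functional arguments. Hence $B(p,v)=B(v,p)$ for all admissible $p,v$, and in particular $A_{j,k}=A_{k,j}$.

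For the Toeplitz property, I would exploit the fact that the basis $\Phi^r_n$ consists of pure translates of the single dilated B-spline $2^{n/2}M_r(2^n\cdot)$. Concretely, starting from
\[
A_{j,k} = \frac{c_\beta}{2}\int_{\mathbb{R}}\int_{\mathbb{R}}\frac{\bigl(\phi^r_{n,j}(x)-\phi^r_{n,j}(y)\bigr)\bigl(\phi^r_{n,k}(x)-\phi^r_{n,k}(y)\bigr)}{e^{\lambda|x-y|}|x-y|^{1+\beta}}\,dy\,dx,
\]
I would perform the change of variables $u=2^nx-k$, $v=2^ny-k$. The basis values become $\phi^r_{n,k}(x)=2^{n/2}M_r(u)$, $\phi^r_{n,j}(x)=2^{n/2}M_r(u-(j-k))$, and analogously for $y$; the Jacobian, together with $|x-y|^{-1-\beta}=2^{n(1+\beta)}|u-v|^{-1-\beta}$, produces an overall prefactor $2^{n\beta}$, while the tempering kernel turns into $e^{-\lambda 2^{-n}|u-v|}$. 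Setting $m=j-k$, the expression reduces to
\[
A_{j,k} \;=\; \frac{c_\beta\,2^{n\beta}}{2}\int_{\mathbb{R}}\int_{\mathbb{R}}\frac{\bigl(M_r(u-m)-M_r(v-m)\bigr)\bigl(M_r(u)-M_r(v)\bigr)}{e^{\lambda 2^{-n}|u-v|}|u-v|^{1+\beta}}\,dv\,du,
\]
which manifestly depends on $j$ and $k$ only through $m=j-k$. Combined with the symmetry already established, this gives the symmetric Toeplitz structure.

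The step requiring the most care — though still routine — is the bookkeeping of the scaling factors in the change of variables, making sure that the integrand retains its translation invariance once the shift by $k$ is absorbed. The key observation is that the kernel depends on $x$ and $y$ only through $|x-y|$, so it is automatically invariant under the joint translation that shifts $u,v$ by $k$; no genuine obstacle arises beyond this. No extra regularity is needed because the calculation is formally identical for all $(j,k)$ pairs and the integrability on any individual entry is already guaranteed by the well-posedness theory developed in Theorem \ref{Laxgramlemma1}.
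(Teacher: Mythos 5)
Your proof is correct, but it takes a genuinely different route from the paper. You work entirely in physical space: symmetry comes straight from the symmetry of the bilinear form, and the Toeplitz structure from the change of variables $u=2^nx-k$, $v=2^ny-k$, using only that the kernel $e^{-\lambda|x-y|}|x-y|^{-1-\beta}$ depends on $x,y$ through $|x-y|$ and that the $\phi^r_{n,j}$ are translates of one another; your bookkeeping of the factor $2^{n\beta}$ is right, and absolute convergence of the double integral is indeed covered by the continuity estimate behind Theorem \ref{Laxgramlemma1}. The paper instead passes to the Fourier side via the representations (\ref{hkkdkdh34452}) and (\ref{hkkmo0dkdh34452}): since $\mathscr{F}[\phi^r_{n,j}](\xi)=2^{-n/2}e^{-i j\xi/2^n}\mathscr{F}[M_r](\xi/2^n)$, the entry $B(\phi^r_{n,j_1},\phi^r_{n,j_2})$ becomes a one-dimensional integral of $\mathcal{G}(\lambda,\xi,\beta)$ (or $|\xi|^\beta$ when $\lambda=0$) against $\cos\bigl((j_2-j_1)\xi/2^n\bigr)\,\bigl|\mathscr{F}[M_r](\xi/2^n)\bigr|^2$, so that dependence on $|j_1-j_2|$ alone — hence both the Toeplitz and the symmetric structure — is read off at once. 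Your argument is more elementary and more general (it needs nothing beyond translation invariance of the kernel, no Fourier computation of $\mathcal{G}$), while the paper's version buys an explicit cosine-transform formula for the entries that is consistent with the machinery ($\mathcal{G}(\lambda,\xi,\beta)$, evenness in $\xi$) already set up and reused elsewhere in the analysis.
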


 \begin{proof}
 Since
 \begin{eqnarray}
&& \mathscr{F}\left[\phi^r_{n, j_1}\right](\xi)=2^{-\frac{n}{2}}e^{-{i\frac{j_1}{2^n}}\xi}\mathscr{F}\left[M_r(x)\right]\left(\frac{\xi}{2^n}\right),\nonumber\\
&&\mathscr{F}\left[\phi^r_{n, j_2}\right](\xi)=2^{-\frac{n}{2}}e^{-{i\frac{j_2}{2^n}}\xi}\mathscr{F}\left[M_r(x)\right]\left(\frac{\xi}{2^n}\right),\nonumber
 \end{eqnarray}
 for $\lambda=0$, by (\ref{hkkdkdh34452}) we have
 \begin{eqnarray}
 &&B\left(\phi^r_{n, j_1},\phi^r_{n,j_2}\right)=\int_{\mathbb{R}}|\xi|^{\beta} \mathscr{F}[\phi^r_{n,j_1}(x)](\xi)\,\overline{\mathscr{F}[\phi^r_{n,j_2}(x)](\xi)}d\xi\nonumber\\
 &&~~~~=\frac{1}{2^n}\int_{\mathbb{R}}|\xi|^\beta e^{i \frac{
 j_2-j_1}{2^n}\xi}\left|\mathscr{F}[M_r(x)]\left(\frac{\xi}{2^n}\right)\right|^2d\xi\nonumber\\
 &&~~~~=\frac{2^{r+1}}{2^n}\int_0^{\infty} \xi^\beta \cos\left(\frac{j_2-j_1}{2^n}\xi\right)\left(\frac{1-\cos(\xi/2^n)}{(\xi/2^n)^2}\right)^r d\xi;
 \end{eqnarray}
similarly, for $\lambda>0$, by  (\ref{hkkmo0dkdh34452}) we have
 \begin{eqnarray}
 &&B\left(\phi^r_{n, j_1},\phi^r_{n,j_2}\right)=\int_{\mathbb{R}} \mathcal{G}(\lambda,\xi,\beta) e^{i \frac{
 j_2-j_1}{2^n}\xi}\left|\mathscr{F}[M_r(x)]\left(\frac{\xi}{2^n}\right)\right|^2d\xi\nonumber\\
 &&~~~~~=\frac{2^{r+1}}{2^n}\int_0^{\infty} \mathcal{G}(\lambda,\xi,\beta)\cos\left(\frac{j_2-j_1}{2^n}\xi\right)\left(\frac{1-\cos(\xi/2^n)}{(\xi/2^n)^2}\right)^r d\xi,
 \end{eqnarray}
in the last step the fact that $\mathcal{G}(\lambda,\xi,\beta)$  is an even function w.r.t. $\xi$ has been used.
The desired result follows from that $B\left(\phi^r_{n, j_1},\phi^r_{n,j_2}\right)$  remains constant if $ |j_1-j_2|$ is a constant.
\end{proof}

 Therefore, we only need to calculate and store the first row of matrix $B(\Phi_n^r,\Phi_n^r)$. Let $h=2^{-n}, {\zeta(y)}=e^{-\lambda y} y^{-1-\beta}$, and $\kappa=\frac{h^3}{c_\beta}$. By using  the Fubini theorem, for the entries of $B(\Phi_n^1,\Phi_n^1)$ we have
 \begin{eqnarray}
 &&B(\Phi_n^1,\Phi_n^1)_{0,0}=\frac{2c_\beta}{h}\int_0^h y{\zeta(y)}dy+2c_\beta\int_h^\infty {\zeta(y)}dy,\label{elementM_1}\\
 && B(\Phi_n^1,\Phi_n^1)_{0,j}=\frac{c_\beta}{h}\int_{(j-1)h}^{jh}\left((j-1)h-y\right) {\zeta(y)}dy\nonumber\\
 &&~~~~~~~~~~~~~+\frac{c_\beta}{h}\int_{jh}^{(j+1)h}\left(y-(j+1)h\right){\zeta(y)}dy,
 \end{eqnarray}
  where $j=1,2,\cdots,2^n-1$; for the entries of $B(\Phi_n^2,\Phi_n^2)$ we have
 \begin{eqnarray}
 &&\kappa B(\Phi_n^2,\Phi_n^2)_{0,0}=\int_0^h (2h-y)y^{2} {\zeta(y)}dy+\frac{4h^3}{3}\int_{2h}^{\infty} {\zeta(y)} dy\nonumber\\
 &&~~~~~~~~~~~~+\int_h^{2h}\left(\frac{y^3}{3}-2hy^2+4h^2y-\frac{4h^3}{3}\right){\zeta(y)}dy,\label{elementM_2}\\
 &&\kappa B(\Phi_n^2,\Phi_n^2)_{0,1}=\int_0^h\frac{2y-3h}{3}y^2{\zeta(y)}dy+\int_{2h}^{3h} \frac{(y-3h)^3}{6} {\zeta(y)}dy\nonumber\\
 &&~~~~-\int_h^{2h} \left(\frac{y^3}{2}-\frac{5hy^2}{2}+\frac{7h^2y}{2}-\frac{7h^3}{6}\right){\zeta(y)} dy+\frac{h^3}{3} \int_{2h}^\infty {\zeta(y)}dy,\\
 && \kappa B(\Phi_n^2,\Phi_n^2)_{0,j}=\int_{(j-2)h}^{(j-1)h}\frac{\left((j-2)h-y\right)^3}{6}{\zeta(y)}dy+\frac{1}{6}\int_{(j-1)h}^{jh} b_1(y) {\zeta(y)} dy\nonumber\\
 &&~~~~~~+\frac{1}{6}\int_{jh}^{(j+1)h} b_2(y) {\zeta(y)} dy+\int_{(j+1)h}^{(j+2)h}\frac{\left(y-(j+2)h\right)^3}{6} {\zeta(y)}dy\label{elementM_1rd}
 \end{eqnarray}
for $j=2,3,\cdots, 2^n-2$, where
 \begin{eqnarray*}
 &&b_1(y)=3y^3+\left(6-9j\right)hy^2+3j\left(3j-4\right)h^2y-\left(3j^3-6j^2+4\right)h^3,\\
 &&b_2(y)=-3y^3+(6+9j)hy^2-3j\left(3j+4\right)h^2y+(3j^3+6j^2-4)h^3.
 \end{eqnarray*}
If $\lambda=0$, all the integrals above can be calculated exactly. When $\lambda\not=0$, we can calculate  them numerically with
 some  regularization techniques. For example, for $\beta\not=1$, we can first rewrite  $ \int_0^h y{\zeta(y)}dy$ as
 \begin{eqnarray}\label{techeneqieucalsle}
{\Gamma(1-\beta)}\left(\sum_{l=1}^{K-1} \frac{e^{-\lambda h}\lambda^{l-1}h^{l-\beta}}{\Gamma(l+1-\beta)}+\frac{\lambda^{K-1}}{\Gamma(K-\beta)}\int_{0}^h
 e^{-\lambda y} y^{K-\beta}dy\right),
 \end{eqnarray}
  and then calculate
  \begin{eqnarray}
\int_{0}^h e^{-\lambda y} y^{K-\beta}dy=\left(\frac{h}{2}\right)^{K-\beta+1}\int_{-1}^1e^{-\frac{\lambda h}{2}(1+\eta)}
\left(1+\eta\right)^{K-\beta}d\eta
 \end{eqnarray}
 by the Gauss-Jacobi quadrature with the  weight function $(1-\eta)^0(1+\eta)^{K-\beta}$ \cite{Shen:11}; we first rewrite $ \int_{h}^{\infty}\zeta(y) dy$ as
 \begin{eqnarray*}
\sum_{l=1}^2\frac{-\Gamma(-\beta)}{e^{\lambda h}h^{\beta}}\frac{(\lambda h)^{l-1} }{\Gamma(l-\beta)}+\lambda^2\Gamma(-\beta)
-\frac{\lambda ^2\Gamma(-\beta)}{\Gamma(2-\beta)}\int_0^h y^{1-\beta}e^{-\lambda y} dy,~~~~
 \end{eqnarray*}
 and then calculate $\int_0^h y^{1-\beta}e^{-\lambda y} dy$ with the techniques  similar to (\ref{techeneqieucalsle}).
 For $\beta=1$, we can first rewrite $ \int_{2h}^\infty\zeta(y)dy$ as
 \begin{eqnarray}
 \frac{e^{-2h\lambda}}{2h}-\lambda \int_{2h}^{\infty}e^{-y}y^{-1}dy
 \end{eqnarray}
 and then calculate the exponential integra1 $\int_{2h}^{\infty}e^{-y}y^{-1}dy$ with  the series expansion representation
 in \cite[Eq. 5.1.11]{Abramowitz:65}.

\subsection{Condition number and preconditioning}
This subsection focuses on reducing the condition number by using the multiscale basis.
 \begin{proposition}
The condition number of ${\bf A}$  satisfies ${\rm Cond_2}({\bf A})\simeq \, 2^{n\beta}$.
\end{proposition}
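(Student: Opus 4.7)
The plan is to prove ${\rm Cond}_2({\bf A})\simeq 2^{n\beta}$ by establishing matching two-sided bounds $\lambda_{\max}({\bf A})\simeq 2^{n\beta}$ and $\lambda_{\min}({\bf A})\simeq 1$. Throughout, if ${\bf d}=(d_j)_{j\in I_n}$ and $p_n=\sum_{j\in I_n}d_j\phi^r_{n,j}\in V_n$, then ${\bf d}^T{\bf A}{\bf d}=B(p_n,p_n)$, so the Rayleigh quotient analysis reduces to comparing $B(p_n,p_n)$ with $\|{\bf d}\|_2^2$. The key ingredients are already in hand: the $L^2$-Riesz basis property (\ref{Rieszconstant}), the continuity and coercivity bounds (\ref{uueeeeeffffff1})--(\ref{uueeeeeffffff2}) of $B(\cdot,\cdot)$ on $\widetilde{H}^{\beta/2}_0(\Omega)$, the inverse estimate (\ref{inverestimatieerd}), and the scaling identity $\|\phi^r_{n,j}\|_{H^{\beta/2}(\mathbb{R})}\simeq 2^{n\beta/2}$ (obtained by a change of variables in the Fourier characterization of the norm, exactly as in the passage following Lemma~\ref{Riezddddfff}).

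For the upper bound on the largest eigenvalue, I combine continuity, the inverse estimate, and the Riesz basis property:
\begin{equation*}
{\bf d}^T{\bf A}{\bf d}=B(p_n,p_n)\lesssim |p_n|^2_{H^{\beta/2}(\mathbb{R})}\lesssim 2^{n\beta}\|p_n\|_{L^2(\mathbb{R})}^2\lesssim 2^{n\beta}\|{\bf d}\|_2^2,
\end{equation*}
so $\lambda_{\max}({\bf A})\lesssim 2^{n\beta}$. For the lower bound on the smallest eigenvalue, coercivity (Theorem~\ref{Laxgramlemma1}) together with $\|p_n\|_{L^2}\le\|p_n\|_{H^{\beta/2}(\mathbb{R})}$ and the Riesz basis lower estimate gives
\begin{equation*}
{\bf d}^T{\bf A}{\bf d}=B(p_n,p_n)\gtrsim \|p_n\|^2_{H^{\beta/2}(\mathbb{R})}\ge\|p_n\|^2_{L^2(\mathbb{R})}\gtrsim\|{\bf d}\|_2^2,
\end{equation*}
so $\lambda_{\min}({\bf A})\gtrsim 1$.

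For the matching lower bound on $\lambda_{\max}$, I test with a single standard basis vector ${\bf d}=e_j$, giving ${\bf d}^T{\bf A}{\bf d}=A_{jj}=B(\phi^r_{n,j},\phi^r_{n,j})\gtrsim\|\phi^r_{n,j}\|^2_{H^{\beta/2}(\mathbb{R})}\simeq 2^{n\beta}$ by coercivity and the scaling identity; since $\|e_j\|_2^2=1$, this yields $\lambda_{\max}({\bf A})\ge A_{jj}\gtrsim 2^{n\beta}$. For the matching upper bound on $\lambda_{\min}$, I exploit the nesting $V_{n_0}\subset V_n$ and test with the fixed coarse-scale function $q=\phi^r_{n_0,0}$, whose representation in the level-$n$ basis $q=\sum_j d_j\phi^r_{n,j}$ has both $\|{\bf d}\|_2^2\simeq\|q\|_{L^2(\mathbb{R})}^2\simeq 1$ (by the Riesz basis property) and $B(q,q)$ equal to a fixed constant independent of $n$; thus $\lambda_{\min}({\bf A})\le {\bf d}^T{\bf A}{\bf d}/\|{\bf d}\|_2^2\lesssim 1$.

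Putting these four estimates together gives the claimed ${\rm Cond}_2({\bf A})\simeq 2^{n\beta}$. The only mildly delicate point I expect is verifying the two-sided scaling $\|\phi^r_{n,j}\|_{H^{\beta/2}(\mathbb{R})}\simeq 2^{n\beta/2}$, which I would prove directly from $\mathscr{F}[\phi^r_{n,j}](\xi)=2^{-n/2}e^{-ij\xi/2^n}\mathscr{F}[M_r](\xi/2^n)$ and the substitution $\eta=\xi/2^n$ in the Fourier norm $\int(1+|\xi|^{2s})|\mathscr{F}[\phi^r_{n,j}](\xi)|^2d\xi$; once this is in place, every other step is a short application of Riesz basis and continuity/coercivity of $B$.
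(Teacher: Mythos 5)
Your proof is correct, and its first half (showing ${\rm Cond}_2({\bf A})\lesssim 2^{n\beta}$ via the Rayleigh quotient, using continuity/coercivity of $B$, the Riesz constants in (\ref{Rieszconstant}), and the inverse estimate (\ref{inverestimatieerd})) is essentially identical to the paper's. Where you genuinely diverge is in the matching bounds. For $\lambda_{\max}({\bf A})\gtrsim 2^{n\beta}$, the paper bounds the diagonal entry $B(\phi^r_{n,0},\phi^r_{n,0})$ from below by explicitly evaluating the integral formulas (\ref{elementM_1}) and (\ref{elementM_2}), with separate computations for $r=1$ and $r=2$; you instead invoke coercivity together with the Fourier-scaling identity $\|\phi^r_{n,j}\|^2_{H^{\beta/2}(\mathbb{R})}\simeq 2^{n\beta}$, which indeed follows from the substitution $\eta=\xi/2^n$ since $\int_{\mathbb{R}}|\eta|^{\beta}\left|\mathscr{F}[M_r](\eta)\right|^2d\eta$ is a finite positive constant whenever $\beta/2<r-\tfrac12$ (the standing restriction on $\beta$). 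For $\lambda_{\min}({\bf A})\lesssim 1$, the paper tests with the constant function for $r=1$ and the hat function $\theta$ for $r=2$, bounding $B$ of each by hand; you test with the fixed coarse function $\phi^r_{n_0,0}\in V_{n_0}\subset V_n$, whose $B$-energy is a constant independent of $n$ and whose level-$n$ coefficient vector has norm $\simeq 1$ by (\ref{Rieszconstant}). Your route is more unified (one argument covers both $r=1$ and $r=2$) and would generalize to other refinable Riesz bases with little change; what the paper's route buys is explicit constants, e.g. $\frac{c_\beta e^{-\lambda}}{1-\beta}h^{-\beta}$ and $\frac{4}{\beta(1-\beta)}$, that exhibit the dependence on $\lambda$ and $\beta$ and reuse the stiffness-matrix entry formulas already derived for the implementation. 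The only point you should make explicit is the verification that the coefficient vector of $\phi^r_{n_0,0}$ in the level-$n$ basis has $\ell_2$-norm bounded below, which is exactly the lower Riesz inequality applied to $\|\phi^r_{n_0,0}\|_{L^2(\mathbb{R})}=\|M_r\|_{L^2(\mathbb{R})}$; with that noted, every step stands.
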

  \begin{proof}
 Let
  $\lambda_{\max}({\bf A})$ and  $\lambda_{\min}({\bf A})$ be the maximal and minimal eigenvalues of ${\bf A}$, respectively.
  Then ${\rm Cond_2}({\bf A})=\frac{\lambda_{\max}(\bf{A})}{\lambda_{\min}(\bf{A}}$. Let  ${\bf d}_n=\left(d_{n,0}, d_{n,1}, \cdots, d_{n,2^n-r}\right)^{\rm T}$. By Theorem 1.2 of \cite{Chan:07}, it holds that
  \begin{eqnarray}
  \lambda_{\min}(\bf{A})=\inf_{ {\bf d}_n\not={\bf 0}}\frac{({\bf d}_n, {\bf A} {\bf d}_n)}{\left({\bf d}_n,{\bf d}_n\right)},~~~
    \lambda_{\max}(\bf{A})=\sup_{ {\bf d}_n\not={\bf 0}}\frac{({\bf d}_n, {\bf A} {\bf d}_n)}{\left({\bf d}_n,{\bf d}_n\right)}.
\end{eqnarray}

 Firstly, because of Theorem \ref{Laxgramlemma1}, it holds that ${\bf d}_n^{\rm T} {\bf A} {\bf d}_n=B(\Phi_n^r{\bf d}_n,\Phi_n^r{\bf d}_n)
 \simeq \left\|\Phi_n^r{\bf d}_n\right\|^2_{H^{\beta/2}(\mathbb{R})}$.
  By (\ref{Rieszconstant}) and (\ref{inverestimatieerd}),  we have
\begin{eqnarray}
  1\le \frac{B(\Phi_n^r{\bf d}_n,\Phi_n^r{\bf d}_n)}{{\bf d}_n^{\rm T} {\bf d}_n}\simeq \frac{\|\Phi_n^r{\bf d}_n\|^2_{H^{\beta/2}(\mathbb{R})}}
  {\|\Phi_n^r{\bf d}_n\|^2_{L^2(\mathbb{R})}}\le 2^{n\beta}.
 \end{eqnarray}
 Hence, ${\rm Cond_2}({\bf A})\lesssim 2^{n\beta}$ for $r=1$ and $r=2$.

 Secondly, for the $V_n$ generated from $M_1(x)$ and $0<\beta<1$, by  (\ref{elementM_1})
\begin{eqnarray}
~~~~~~~~~\lambda_{\max}({\bf A})\ge B\left(\phi^1_{n,0},\phi^1_{n,0}\right)\ge\frac{c_\beta}{h}\int_0^h y{\zeta(y)}dy\ge \frac{c_\beta e^{-\lambda}}{h}\int_0^h y^{1-\beta}dy=\frac{c_\beta e^{-\lambda} }{1-\beta} h^{-\beta}.
\end{eqnarray}
In addition,  $1=\Phi^r_{n}{\bf d}\in V_n$ with ${\bf d}=(\frac{1}{2^{n/2}},\frac{1}{2^{n/2}},\cdots,\frac{1}{2^{n/2}})^{\rm T} $ and
\begin{eqnarray}
B(1,1)\le2\int_\Omega\int_{\mathbb{R}\backslash \Omega}\frac{1}{|x-y|^{1+\beta}}dydx=\frac{4}{\beta(1-\beta)}.
\end{eqnarray}
Thus, $\lambda_{\min}({\bf A})\le  \frac{B(1,1)}{{\bf d}^{\rm T}{\bf d}}\lesssim \frac{4}{\beta(1-\beta)}$.
Hence, $h^{-\beta}=2^{n\beta}\lesssim{\rm Cond}_2({\bf A})$.

Finally, for the $V_n$ generated from $M_2(x)$, by  (\ref{elementM_2}), we have
\begin{eqnarray}
&&\lambda_{\max}({\bf A})\ge B\left(\phi^2_{n,0},\phi^2_{n,0}\right)\ge \frac{1}{\kappa}\int_0^h (2h-y)y^{2} {\zeta(y)}dy\nonumber\\
&&~~~~~~~~~~~\ge \frac{he^{-\lambda}}{\kappa}\int_0^h y^{1-\beta}dy=\frac{c_\beta e^{-\lambda}}{2(2-\beta)}h^{-\beta},
\end{eqnarray}
where $\frac{y^3}{3}-2hy^2+4h^2y-\frac{4h^3}{3}>0$ with $y\in [h,2h]$ has been used in the second step.
In addition,  note that
\begin{eqnarray}
\theta(x)=\left\{\begin{array}{ll}2x& x\in [0,\frac{1}{2}]\\ 2-2x& x\in (\frac{1}{2},1]\\0& {\rm ~~~else}\end{array}\right.
=\sum_{j\in I_n}\frac{\theta((j+1)/2^n)}{2^{n/2}}\phi^2_{n,j}\in V_n;
\end{eqnarray}
 and by proposition \ref{sec2proposition1}, we have
\begin{eqnarray}
B\left(\theta(x),\theta(x)\right)\lesssim \left|\theta(x)\right|_{H^{\beta/2}(\Omega^*)}\lesssim \left\|\theta(x)\right\|_{H^1(\Omega^*)}\lesssim 1.
\end{eqnarray}
Let
\begin{eqnarray}
{\bf d}=\frac{1}{2^n}\left( \theta \left(\frac{1}{2^n}\right), \theta \left(\frac{2}{2^n}\right),\cdots, \theta\left(\frac{2^n-1}{2^n}\right)\right)^T.
 \end{eqnarray}
 By (\ref{Rieszconstant}), it holds that ${\bf d}^{\rm T}{\bf d} \lesssim 1$. Thus, $\lambda_{\min}({\bf A})\le \frac{B(\theta(x),
 \theta(x))}{{\bf d}^{\rm T}{\bf d}}\lesssim 1$. Hence, $2^{n\beta}\lesssim {\rm Cond}_2({\bf A})$.
\end{proof}

In the following, we consider  the stiffness matrix $\widetilde{\bf B}:=B\left(\widetilde{\Psi}^r_n,\widetilde{\Psi}^r_n \right) $.
 \begin{proposition}
The condition number of  $\widetilde{\bf B}$  satisfies ${\rm Cond_2}(\widetilde{\bf B})\lesssim 1$.
\end{proposition}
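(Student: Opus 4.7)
The plan is to derive the uniform bound on ${\rm Cond}_2(\widetilde{\bf B})$ directly from the Riesz basis property of the normalized multiscale collection $\widetilde{\Psi}^r_n$ in $\widetilde{H}_0^{\beta/2}(\Omega)$, which was noted immediately after Lemma \ref{Riezddddfff} in display (\ref{reizedewddd2}). The key idea is that when the trial basis is itself a Riesz basis of the energy space, the matrix of a coercive and continuous bilinear form on that space is automatically spectrally equivalent to the identity.

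More concretely, the first step is to fix an arbitrary coefficient vector ${\bf c} = (c_i)$ indexed by the elements $\{\widetilde{\psi}_i\}$ of $\widetilde{\Psi}^r_n$, and set $p = \sum_i c_i \widetilde{\psi}_i \in V_n \subset \widetilde{H}_0^{\beta/2}(\Omega)$. By definition of the stiffness matrix this gives ${\bf c}^{\rm T} \widetilde{\bf B}\, {\bf c} = B(p,p)$. Next, I would invoke the continuity bound (\ref{uueeeeeffffff1}) and the coercivity bound (\ref{uueeeeeffffff2}) from the proof of Theorem \ref{Laxgramlemma1} to conclude that
\begin{eqnarray*}
B(p,p) \simeq \|p\|^2_{H^{\beta/2}(\mathbb{R})},
\end{eqnarray*}
with constants that depend only on $\Omega$, $\lambda$, and $\beta$, not on $n$.

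The third step is to apply the Riesz basis property of $\widetilde{\Psi}^r_n$ in $\widetilde{H}_0^{\beta/2}(\Omega)$ to obtain
\begin{eqnarray*}
\|p\|^2_{H^{\beta/2}(\mathbb{R})} \simeq \sum_i |c_i|^2 = {\bf c}^{\rm T} {\bf c},
\end{eqnarray*}
again with $n$-independent constants. Chaining the three equivalences yields ${\bf c}^{\rm T} \widetilde{\bf B}\, {\bf c} \simeq {\bf c}^{\rm T} {\bf c}$ uniformly in ${\bf c}$ and $n$, so both $\lambda_{\min}(\widetilde{\bf B})$ and $1/\lambda_{\max}(\widetilde{\bf B})$ are bounded below by positive constants independent of $n$, which gives ${\rm Cond}_2(\widetilde{\bf B}) \lesssim 1$.

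The main obstacle is the justification of the Riesz basis property in $\widetilde{H}_0^{\beta/2}(\Omega)$ with the specific normalization in (\ref{multiscalebases2}), which uses the factors $\sqrt{B(\phi^r_{n_0,0},\phi^r_{n_0,0})}$ and $\sqrt{B(\psi^r_{l,j},\psi^r_{l,j})}$ rather than the $2^{n_0\beta/2}$ and $2^{l\beta/2}$ factors appearing in Lemma \ref{Riezddddfff}. To reconcile the two, I would show that for each generating function,
\begin{eqnarray*}
B(\psi^r_{l,j},\psi^r_{l,j}) \simeq \|\psi^r_{l,j}\|^2_{H^{\beta/2}(\mathbb{R})} \simeq 2^{l\beta},
\end{eqnarray*}
and analogously for $\phi^r_{n_0,j}$ and the boundary-adapted functions $\psi^r_{l,1}, \psi^r_{l,2^l}$; this follows from the same coercivity/continuity pair (\ref{uueeeeeffffff1})--(\ref{uueeeeeffffff2}) applied to a single basis element together with the norm-equivalence already built into Lemma \ref{Riezddddfff}. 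With this comparison in hand, renormalizing by $\sqrt{B(\cdot,\cdot)}$ only perturbs the Riesz constants by a bounded factor, so the uniform Riesz bounds used in step three do hold, completing the argument.
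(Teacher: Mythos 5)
Your proposal is correct and follows essentially the same route as the paper: both pass from ${\bf c}^{\rm T}\widetilde{\bf B}{\bf c}=B(p,p)$ to $\|p\|^2_{H^{\beta/2}(\mathbb{R})}$ via the continuity/coercivity pair behind Theorem \ref{Laxgramlemma1}, and then to $\sum_i|c_i|^2$ via the Riesz basis property of the $B$-normalized multiscale collection (\ref{reizedewddd2}). The normalization issue you flag as the main obstacle is exactly what the paper settles in the text right after Lemma \ref{Riezddddfff}, by the same comparison $B(\psi^r_{l,j},\psi^r_{l,j})\simeq\|\psi^r_{l,j}\|^2_{H^{\beta/2}(\mathbb{R})}\simeq 2^{l\beta}$ that you give.
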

\begin{proof} Let
$$\widetilde {{\bf c}}_n:=\left({\bf{d}}_{n_0}^{\rm T}, {\bf{c}}_{n_0}^{\rm T},{\bf{c}}_{n_0+1}^{\rm T}, \cdots,{{\bf c}}_l^{\rm T},
\cdots, {\bf c}_{n-1}^{\rm T}\right)^{\rm T}$$
with ${\bf c}_l=\left(c_{l,1}, c_{l,2},\cdots, c_{l,2^l}\right)^{\rm T}$ for $l=n_0,\cdots,n-1$.
By Theorem \ref{Laxgramlemma1} we have
$${\widetilde {{\bf c}}_n}^{\rm T} \widetilde{\bf B} {\widetilde {{\bf c}}_n}
=B(\widetilde{\Psi}^r_n{\widetilde {{\bf c}}_n},\widetilde{\Psi}^r_n{\widetilde {{\bf c}}_n})
\simeq \left\|\widetilde{\Psi}^r_n{\widetilde {{\bf c}}_n}\right\|^2_{H^{\beta/2}(\mathbb{R})}\simeq\widetilde {{\bf c}}_n^{\rm T}\widetilde {{\bf c}}_n,$$
where the last term comes from the fact that  (\ref{reizedewddd2}) forms a
Riesz bases of $\tilde{H}_0^{\beta/2}(\Omega)$. Thus, we complete the proof.
\end{proof}

 Since $\widetilde{\Psi}^r_n$ are not composed of translations of a single function, the result
 like Proposition \ref{Teoplitzstruch} does not hold again.
 However, since both $\Phi_n^r$ and $ \widetilde{\Psi}^r_n$ are  the basis functions of $V_n$, there exists a matrix $\widetilde{{\bf M}}^r_n$ such that
 \begin{eqnarray}\label{waveletbasissmatrix1}
 \widetilde{\Psi}^r_n=\Phi_n^r \widetilde{{\bf M}}^r_n.
 \end{eqnarray}
 Then
$\widetilde {\bf B}=\left( \Phi_n^r \widetilde{{\bf M}}^r_n,\Phi_n^r \widetilde{{\bf M}}^r_n \right)
=\left(\widetilde{{\bf M}}^r_n\right)^{\rm T}B(\Phi_n^r,\Phi_n^r) \widetilde{{\bf M}}^r_n
=\left(\widetilde{{\bf M}}^r_n\right)^{\rm T} {\bf A}\widetilde{{\bf M}}^r_n.
$
 To obtain  $\widetilde{{\bf M}}^r_n$, for $l\ge n_0$,   by (\ref{refinerelation1}), and (\ref{refinerelation2})-(\ref{refinerelation5}), there exist matrices
${\bf  M}^r_{l,0}$ and ${\bf M}^r_{l,1}$ such that
 \begin{eqnarray}
 \Phi_l^r=\Phi_{l+1}^r {\bf  M}^r_{l,0},~~~\Psi_l^r=\Phi_{l+1}^r {\bf M}^r_{l,1}.
 \end{eqnarray}
 Denote ${\bf M}_l^r=({\bf M}_{l,0}^r,{\bf M}_{l,1}^r)$. We have
 \begin{eqnarray}
 \left(\Phi^r_{n_0}, \Psi^r_{n_0}, \cdots, \Psi_{n-2}^r,\Psi_{n-1}^r\right)=\Phi_{n}^r {\bf M}^r,
 \end{eqnarray}
 \begin{eqnarray}
 {\bf M}^r={\bf M}_{n-1}^r\left(\begin{array}{ll}{\bf M}_{n-2}^r&{\bf 0}\\{\bf 0}&{\bf I}^r_{n-2}\end{array}\right)
 \left(\begin{array}{ll}{\bf M}_{n-3}^r&{\bf 0}\\{\bf 0}&{\bf I}^r_{n-3}\end{array}\right)\cdots
 \left(\begin{array}{ll}{\bf M}_{n_0}^r&{\bf 0}\\{\bf 0}&{\bf I}^r_{n_0}\end{array}\right)
 \end{eqnarray}
with ${\bf I}_{l}^r, \, l=n_0,n_1,\cdots, n-1$ being identity matrices. Define a diagonal matrix $\widetilde{{\bf D}}^r_n$ as
\begin{eqnarray*}
&&{\rm diag}\Big(\underbrace{a^r_{n_0},\cdots, a^r_{n_0}}_{2^{n_0-r+1}}, b^r_{n_0,1},\underbrace{b^r_{n_0,2},\cdots, b^r_{n_0,2}}_{2^{n_0}-2} b^r_{n_0,1},\nonumber\\
&&~~~b^r_{n_0+1,1},\underbrace{b^r_{n_0+1,2},\cdots, b^r_{n_0+1,2}}_{2^{n_0+1}-2} b^r_{n_0+1,1},\cdots,
 b^r_{n-1,1},\underbrace{b^r_{n-1,2},\cdots, b^r_{n-1,2}}_{2^{n-1}-2} b^r_{n-1,1}\Big)~~~~~~~~
\end{eqnarray*}
with $a^r_{n_0}={{B(\phi_{n_0,1}^r,\phi_{n_0,1}^r)^{-\frac{1}{2}}}}$,
 and $b^r_{l,1}={{B(\psi_{l,1}^r,\psi_{l,1}^r)^{-\frac{1}{2}}}}, \,b^r_{l,2}={{B(\psi_{l,2}^r,\psi_{l,2}^r)^{-\frac{1}{2}}}}$
for $l=n_0, n_0+1,\cdots, n-1$. Then from  (\ref{multiscalebases2}), we have $\widetilde{{\bf M}}^r_n={\bf M}^r\widetilde{{\bf D}}^r_n$. Note that
$ a^r_{n_0}$ and $b^r_{l,1}, \,b^r_{l,2}$ can be calculated  by the relations  (\ref{refinerelation2})-(\ref{refinerelation5}), Proposition \ref{Teoplitzstruch}, and
the formulae (\ref{elementM_1})-(\ref{elementM_1rd}). For example, $B(\psi_{l,2}^1,\psi_{l,2}^1)$ can be obtained by
\begin{eqnarray}
\frac{1}{\sqrt{2}}\left(\frac{1}{2},-\frac{1}{2}\right)
\left(\begin{array}{cc} B\left(\Phi^1_{l+1},\Phi^1_{l+1}\right)_{0,0}&B\left(\Phi^1_{l+1},\Phi^1_{l+1}\right)_{0,1}\\[4pt]
B\left(\Phi^1_{l+1},\Phi^1_{l+1}\right)_{0,1}&B\left(\Phi^1_{l+1},\Phi^1_{l+1}\right)_{0,0}\end{array}\right)\nonumber
\frac{1}{\sqrt{2}}\left(\frac{1}{2},-\frac{1}{2}\right)^{\rm T}.
\end{eqnarray}

In practice, we do not need to generate the stiffness matrix $\widetilde {\bf B}$ explicitly;
the purpose of introducing  the multiscale basis functions usually is to obtain  the preconditioning matrix of ${\bf A}$,
due to its density and the increasing condition number.
 Let $p_n=\Phi_n^r {\bf d}_n $ and  ${\bf f}_n=\left(f, (\Phi_n^r)^{\rm T}\right)$.
 Then the matrix equation for (\ref{approximationsolution})  is
\begin{eqnarray}\label{noprecondiitioning}
{\bf A} {\bf d}_n={\bf f}_n.
\end{eqnarray}
Meanwhile, by (\ref{waveletbasissmatrix1}) and (\ref{waveletbasissmatrix1}), the matrix equation for the basis functions $\widetilde{\Psi}^r_n$
  actually is
\begin{eqnarray}\label{preconditioningsystem}
{\left(\widetilde{{\bf D}}^r_n ({\bf M}^r)^T{\bf A}{\bf M}^r \widetilde{{\bf D}}^r_n \right)}
{\left(({\bf M}^r\widetilde{{\bf D}}^r_n)^{-1}{\bf d}_n\right)}={\widetilde{{\bf D}}^r_n ({\bf M}^r)^T{\bf f}_n}.
\end{eqnarray}
The system (\ref{preconditioningsystem}) can be regarded as the preconditioned form of the system (\ref{noprecondiitioning}).
 Since the condition number of matrix $\widetilde{{\bf D}}^r_n ({\bf M}^r)^T{\bf A}{\bf M}^r \widetilde{{\bf D}}^r_n $ is uniformly bounded,
 if the conjugate gradient method (CG) is used, the iteration number will be independent of the size of ${\bf d}_n$ \cite{Chan:07}.
 The  CG method for (\ref{preconditioningsystem}) can be performed like the programs provided in \cite{Chan:07},  where in each iteration,
  the matrix vector products like ${\bf M}^r {\bf e}, ({\bf M}^r)^{\rm T} {\bf e}$, ${\bf D}^r_n{\bf e}$, and ${\bf A}{\bf e}$ are needed,
   but in fact, they can be performed effectively with the total cost $\mathcal{O}(N\log N)\,(N=2^n-r+1)$. More specifically,
\begin{itemize}
\item ${\bf D}^r_n$ is a diagonal matrix, which can be generated with  the cost $\mathcal{O}(\log_2(N))$, and stored with the  cost $\mathcal{O}(N)$.
\item ${\bf M}^r$ and $ ({\bf M}^r)^{\rm T}$  are usually called the fast wavelet transform (FWT) matrices.
They do not need to be pre-stored or assembled, and  ${\bf M}^r {\bf e} $ and $({\bf M}^r)^{\rm T} {\bf e}$ can be implemented
following a process like  \cite[pp. 431]{Cohen:00}, with the cost $\mathcal{O}(N)$.
\item ${\bf A}$ is a Toeplitz matrix, so the storage cost is $\mathcal{O}(N)$, and  by the FFT, the computation cost
for ${\bf A}{\bf e}$ is $\mathcal{O}(N\log N)$ \cite[pp. 11-12]{Chan:07} and \cite{Wang:10}.
\end{itemize}

\section{Weak solutions for problems with generalized Dirichlet type boundary condition}\label{sec6}
Like the existing literatures on variational numerical methods for non-local diffusion problems \cite{Acosta:171,Du:12,DElia:13,Ervin:16,Xu:14,Tian:16},  we  have discussed numerical methods for (\ref{Laplacedirichelt}) with the homogeneous boundary condition in the previous sections. In this section, we consider the problem with generalized Dirichlet type boundary condition, i.e.,
\begin{eqnarray}\label{Laplacedirichelt22}
\left\{\begin{array}{ll}
-(\Delta +\lambda)^ {\beta/2}p(x)=f(x),&~~ x\in \Omega,\\
p(x)=g(x),&~~x\in \mathbb{R}\backslash\Omega.
\end{array}\right.
\end{eqnarray}

Introducing a function $\eta(x)$ defined in $\mathbb{R}$ such that $\eta(x)=g(x) $ in $\mathbb{R}\backslash \Omega$, the weak solution (\ref{Laplacedirichelt22}) can be defined as: find $p=u+\eta$ such that $u\in {\widetilde{H}_0^{\beta/2}(\Omega)}$ and
\begin{eqnarray}\label{weakdirichletproblem}
B(u,v)=\left\langle f,v\right\rangle-B(\eta,v) ~~~\forall v\in {\widetilde{H}_0^{\beta/2}(\Omega)}.
\end{eqnarray}
\begin{theorem}\label{existenceedd}
Assume that $f\in H^{-\beta/2}(\Omega)$ and there exists a function $\eta(x)$ satisfying $\int_\Omega\int_{\mathbb{R}}\frac{\left(\eta(x)-\eta(y)\right)^2}{|x-y|^{1+\beta}}dxdy<\infty$.
Then (\ref{Laplacedirichelt22}) has an unique weak solution.
\end{theorem}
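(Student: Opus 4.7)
The plan is to apply the Lax-Milgram theorem on $\widetilde{H}_0^{\beta/2}(\Omega)$ to the shifted problem (\ref{weakdirichletproblem}). The bilinear form $B(\cdot,\cdot)$ was already shown to be continuous and coercive on $\widetilde{H}_0^{\beta/2}(\Omega)$ in the proof of Theorem \ref{Laxgramlemma1} via the estimates (\ref{uueeeeeffffff1}) and (\ref{uueeeeeffffff2}), so the only new task is to show that the linear functional
\begin{equation*}
v\mapsto \left\langle f,v\right\rangle - B(\eta,v)
\end{equation*}
is bounded on $\widetilde{H}_0^{\beta/2}(\Omega)$. The first term is handled in one line by $|\left\langle f,v\right\rangle|\le\|f\|_{H^{-\beta/2}(\Omega)}\|v\|_{H^{\beta/2}(\mathbb{R})}$ exactly as in Theorem \ref{Laxgramlemma1}, so the crux is to bound $B(\eta,v)$.

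The key idea is to exploit the fact that $v\equiv 0$ on $\mathbb{R}\setminus\Omega$. This makes the integrand of $B(\eta,v)$ vanish on $(\mathbb{R}\setminus\Omega)\times(\mathbb{R}\setminus\Omega)$, and I would split the remaining domain into $\Omega\times\Omega$ and $\Omega\times(\mathbb{R}\setminus\Omega)$, merging the latter with $(\mathbb{R}\setminus\Omega)\times\Omega$ by the symmetry of the integrand. On this second piece, $v(x)-v(y)=v(x)$. I would then apply the Cauchy-Schwarz inequality on each piece with respect to the weight $|x-y|^{-1-\beta}dxdy$, discarding the harmless factor $e^{-\lambda|x-y|}\le 1$.

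The resulting $\eta$-factors each fit inside $\int_\Omega\int_{\mathbb{R}}(\eta(x)-\eta(y))^2/|x-y|^{1+\beta}dydx$, which is finite precisely by the hypothesis. The $v$-factors are controlled by $|v|_{H^{\beta/2}(\mathbb{R})}$: on $\Omega\times\Omega$ this is immediate, and on $\Omega\times(\mathbb{R}\setminus\Omega)$ the identity $v(x)^2=(v(x)-v(y))^2$ lets one absorb the $v$-factor into the same global Slobodeckii seminorm. Combining the pieces yields $|B(\eta,v)|\lesssim \|v\|_{H^{\beta/2}(\mathbb{R})}$, so the full right-hand side defines a bounded linear functional. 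The Lax-Milgram theorem then produces a unique $u\in\widetilde{H}_0^{\beta/2}(\Omega)$ solving (\ref{weakdirichletproblem}), and $p=u+\eta$ is the unique weak solution of (\ref{Laplacedirichelt22}).

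The main point to watch is that the hypothesis on $\eta$ is one-sided: integration runs over $\Omega\times\mathbb{R}$ rather than $\mathbb{R}\times\mathbb{R}$. This is unavoidable because a generic boundary datum $g$ will not decay at infinity, making $|\eta|_{H^{\beta/2}(\mathbb{R})}$ infinite in general, and so one cannot simply bound $B(\eta,v)$ symmetrically by $|\eta|_{H^{\beta/2}(\mathbb{R})}|v|_{H^{\beta/2}(\mathbb{R})}$ as for a typical symmetric form. The domain splitting above is essentially forced by this one-sidedness, and is the one nontrivial ingredient in the argument.
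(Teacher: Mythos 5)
Your argument for the existence part coincides with the paper's: both proofs reduce matters to showing that $v\mapsto B(\eta,v)$ is bounded on $\widetilde{H}_0^{\beta/2}(\Omega)$, and both do so by observing that the integrand vanishes on $(\mathbb{R}\backslash\Omega)\times(\mathbb{R}\backslash\Omega)$ because $v=0$ there, splitting the remaining region, discarding $e^{-\lambda|x-y|}\le 1$, and applying Cauchy--Schwarz so that the $\eta$-factor is exactly the one-sided quantity $\int_\Omega\int_{\mathbb{R}}\frac{(\eta(x)-\eta(y))^2}{|x-y|^{1+\beta}}\,dy\,dx$ assumed finite, while the $v$-factor is absorbed into $\|v\|_{H^{\beta/2}(\mathbb{R})}$. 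The slight difference in how the off-diagonal region is labelled (your $\Omega\times\Omega$ plus symmetrized $\Omega\times(\mathbb{R}\backslash\Omega)$ versus the paper's $\mathbb{R}\times\Omega$ plus $\Omega\times(\mathbb{R}\backslash\Omega)$) is immaterial. Up to this point the proposal is correct and essentially identical to the paper.

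However, there is a genuine gap in the uniqueness claim. Lax--Milgram gives a unique $u\in\widetilde{H}_0^{\beta/2}(\Omega)$ solving (\ref{weakdirichletproblem}) \emph{for the fixed extension} $\eta$, but the notion of weak solution of (\ref{Laplacedirichelt22}) involves an arbitrary choice of $\eta$ with $\eta=g$ on $\mathbb{R}\backslash\Omega$, and your last sentence ("$p=u+\eta$ is the unique weak solution") silently identifies uniqueness of $u$ with uniqueness of $p$. A priori two admissible extensions $\eta$ and $\tilde\eta$ could produce different solutions $p$ and $\tilde p$, so the theorem as stated is not yet proved. The paper closes this with a short additional argument: for two extensions one has $B(p-\tilde p,v)=0$ for all $v\in\widetilde{H}_0^{\beta/2}(\Omega)$, and since $p-\tilde p$ itself lies in $\widetilde{H}_0^{\beta/2}(\Omega)$ (it vanishes outside $\Omega$), one may take $v=p-\tilde p$ and use the coercivity from Theorem \ref{Laxgramlemma1} to conclude $p=\tilde p$, i.e.\ the weak solution depends only on the values of $g$ on $\mathbb{R}\backslash\Omega$. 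You should add this step (including the observation that $p-\tilde p$ is an admissible test function) to complete the proof.
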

\begin{proof}
According to the proof of Theorem \ref{Laxgramlemma1}, it remains to show that $B(\eta,\cdot)$ is a bounded linear functional on ${\widetilde{H}_0^{\beta/2}(\Omega)}$. In fact, for any $v\in {\widetilde{H}_0^{\beta/2}(\Omega)}$, it holds that
\begin{eqnarray}
&&\left|B(\eta,v)\right|=\frac{1}{2}\left|\int_\mathbb{R}\int_{\mathbb{R}}\frac{\left(\eta(x)-\eta(y)\right)\left(v(x)-v(y)\right)}{e^{\lambda|x-y|}|x-y|^{1+\beta}}dydx\right|\nonumber\\
&&~~~~~~~~~~~\le \frac{1}{2}\bigg|\int_\mathbb{R}\int_{\Omega}\frac{\left(\eta(x)-\eta(y)\right)\left(v(x)-v(y)\right)}{e^{\lambda|x-y|}|x-y|^{1+\beta}}dydx\nonumber\\
&&~~~~~~~~~~~~+\int_\Omega\int_{\mathbb{R}\backslash \Omega}\frac{\left(\eta(x)-\eta(y)\right)\left(v(x)-v(y)\right)}{e^{\lambda|x-y|}|x-y|^{1+\beta}}dydx\bigg|;
\end{eqnarray}
using the Cauchy-Schwarz inequality yields
\begin{eqnarray}
\left|B(\eta,v)\right|\le \left(\int_{\Omega}\int_{\mathbb{R}}\frac{\left|\eta(x)-\eta(y)\right|^2}{|x-y|^{1+\beta}}dydx\right)^{\frac{1}{2}} \,\left\|v\right\|_{H^{\beta/2}(\mathbb{R})}.
\end{eqnarray}
Thus $(\ref{weakdirichletproblem})$ has an unique solution $u(x)$.

Further, let $\eta, \tilde{\eta}$ be two functions satisfying $\eta=\tilde{\eta}=g$ in $\mathbb{R}\backslash \Omega$, and $p$ and $\tilde{p}$ are the corresponding weak solutions. Then
\begin{eqnarray}\label{ddeeeddff}
B(p-\tilde{p},v)=0~~~\forall v\in {\widetilde{H}_0^{\beta/2}(\Omega)}.
\end{eqnarray}
Choosing $v=p-\tilde{p}$ in (\ref{ddeeeddff}) yields that $p=\tilde{p}$, which means that the weak solution   actually depends only the values  of $g$ in $\mathbb{R}\backslash\Omega$.  Therefore, (\ref{Laplacedirichelt22}) has a unique weak solution $p(x)=u(x)+\eta(x)$.
\end{proof}

For the second order elliptic problem,  since  $\eta(x)=p(0)(1-x)+p(1)x$ satisfies $\eta(0)=p(0), \eta(1)=p(1)$, and $p(x)-\eta(x)\in H_0^1(\Omega)$,  one can easily translate the problem with the general Dirichlet boundary condition  to the problem with zero boundary  (the existence of $\eta(x)$ can also be ensured by the trace theorem). However, for the nonlocal problems with nonlocal boundary conditions, to the best our knowledge, there are no general methods to find  the suitable $\eta(x)$ and no general theory to ensure the existence of $\eta(x)$. Here, we point out that if $g(x)\in L^\infty(\mathbb{R}\backslash\Omega)$, one can take $\eta(x)$ by the following ways to ensure $\int_\Omega\int_{\mathbb{R}}\frac{\left(\eta(x)-\eta(y)\right)^2}{|x-y|^{1+\beta}}dxdy<\infty$:
\begin{enumerate}
  \item If $0<\beta< 1$,  one only needs to extend $g(x)$ such that $\eta(0)=g(0),\eta(1)=g(1),\eta(x)\in H^{\beta/2}(\Omega)$, and $\left\|\eta\right\|_{L^\infty(\Omega)}<\infty$.  In particular, the  function $S_1(x)=g(0)(1-x)+g(1)x$ can be used as $\eta(x)$ for $x\in \Omega$.
  \item If  there exist $a_1<0, \, b_1>1$ such  that  $g(x)$ is one-times continuously differentiable on $[a_1,0]$ and $[1,b_1]$, one only needs to extend $g(x)$ such that $\eta(x)$ is one-times continuously differentiable on $[0,1]$. In particular, the  spline polynomial $S_3(x)$ satisfying
      $S_3(0)=g(0), S_3^{\prime}(0)=g^\prime(0), S_3(1)=g(1), S_3^\prime(1)=g^\prime (1)$ can be used as the $\eta(x)$ for $x\in\Omega$.
\end{enumerate}
In fact, for case 1:
\begin{eqnarray}
&&\int_\Omega\int_{\mathbb{R}}\frac{\left(\eta(x)-\eta(y)\right)^2}{|x-y|^{1+\beta}}dxdy\le \int_\Omega\int_\Omega\frac{\left(\eta(x)-\eta(y)\right)^2}{|x-y|^{1+\beta}}dydx\nonumber\\
&&~~+2\left(\left\|g(x)\right\|_{L^\infty(\mathbb{R}\backslash \Omega)}^2+\left\|\eta\right\|_{L^\infty(\Omega)}^2\right)\int_\Omega\int_{\mathbb{R}\backslash\Omega}\frac{1}{|x-y|^{1+\beta}}dydx
<\infty.
\end{eqnarray}
For case 2:
\begin{eqnarray*}
&&\int_\Omega\int_{\mathbb{R}}\frac{\left(\eta(x)-\eta(y)\right)^2}{|x-y|^{1+\beta}}dxdy\le \int_\Omega\int_{a_1}^{b_1}\frac{\left(\eta(x)-\eta(y)\right)^2}{|x-y|^{1+\beta}}dydx\nonumber\\
&&~~+2\left(\left\|g(x)\right\|_{L^\infty(\mathbb{R}\backslash \Omega)}^2+\left\|\eta\right\|_{L^\infty(\Omega)}^2\right)\int_\Omega\int_{\mathbb{R}\backslash(a_1,b_1)}\frac{1}{|x-y|^{1+\beta}}dydx;
\end{eqnarray*}
and by the mean value theorem
\begin{eqnarray*}
\int_\Omega\int_{a_1}^{b_1}\frac{\left(\eta(x)-\eta(y)\right)^2}{|x-y|^{1+\beta}}dydx\le \left\|\eta^\prime\right\|^2_{L^\infty(a_1,b_1)}\int_\Omega\int_{a_1}^{b_1}\frac{1}{|x-y|^{\beta-1}}dydx<\infty.
\end{eqnarray*}
Thus
\begin{eqnarray}
\int_\Omega\int_{\mathbb{R}}\frac{\left(\eta(x)-\eta(y)\right)^2}{|x-y|^{1+\beta}}dxdy<\infty.
\end{eqnarray}

\begin{remark}
 In particular, if $f(x)=0$ and $g(x)=1$ in (\ref{Laplacedirichelt22}), then   $S_1(x)$ and $S_3(x)$ will be $1$.
Thus, one can choose $\eta(x)=1$ for $x\in \mathbb{R}$, and the weak formulation  (\ref{weakdirichletproblem})
reduces to $B(u,v)=0$, which admits an unique solution $u(x)=0$. Therefore, (\ref{Laplacedirichelt22}) has  an unique solution $p=\eta(x)+u(x)=1$.
\end{remark}
 \begin{theorem}\label{convergencedd2}
 Let $u=p-\eta$ with $u\in {\widetilde{H}_0^{\beta/2}(\Omega)}\cap H^\mu(\Omega) \,(\mu\ge \beta/2)$ be the
exact solution of  (\ref{weakdirichletproblem}) and $u_n=p_n-\eta\in V_n$ be the Galerkin  approximation solution. Then
 \begin{eqnarray}
 \left\|p-p_n\right\|_{H^{\beta/2}(\Omega)}\lesssim 2^{-n\left(\min\{\mu, r\}-\beta/2\right)}\left\|p-\eta\right\|_{H^\mu(\Omega)},
 \end{eqnarray}
 where $\beta\in (0,1)$ if $V_n$ is generated from $M_1(x)$, and  $\beta\in (0,2)$ if $V_n$ is generated from $M_2(x)$.
 \end{theorem}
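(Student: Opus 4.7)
The plan is to reduce the nonhomogeneous problem to the homogeneous case already handled in Theorem \ref{errorestimate} by exploiting the simple identity $p - p_n = u - u_n$. Since the unknown offset $\eta$ cancels from the difference, controlling $\|p - p_n\|_{H^{\beta/2}(\Omega)}$ amounts to controlling $\|u - u_n\|_{H^{\beta/2}(\mathbb{R})}$ (restriction to $\Omega$ only makes the norm smaller).

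First I would verify that $u_n$ is in fact the standard Galerkin approximation to $u$. Subtracting the discrete equation $B(u_n, v_n) = \langle f, v_n\rangle - B(\eta, v_n)$ from the continuous equation (\ref{weakdirichletproblem}) gives the Galerkin orthogonality $B(u - u_n, v_n) = 0$ for all $v_n \in V_n$. Combined with the continuity and coercivity of $B(\cdot,\cdot)$ on $\widetilde{H}_0^{\beta/2}(\Omega)$ established in (\ref{uueeeeeffffff1}) and (\ref{uueeeeeffffff2}), the standard Céa-type argument (exactly as in the proof of Theorem \ref{errorestimate}) yields
\begin{eqnarray*}
\|u - u_n\|_{H^{\beta/2}(\mathbb{R})} \lesssim \inf_{v_n \in V_n} \|u - v_n\|_{H^{\beta/2}(\mathbb{R})}.
\end{eqnarray*}

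Next, since $u = p - \eta \in \widetilde{H}_0^{\beta/2}(\Omega) \cap H^\mu(\Omega)$ by hypothesis, I would apply the approximation estimate of Proposition \ref{section4lemma1} to $u$ with the orthogonal projection $P_n u \in V_n$, noting that Proposition \ref{section4lemma1} requires the regularity exponent to be capped by $r$ (i.e., $\alpha \le 1$ when $r=1$ and $\alpha \le 2$ when $r=2$). Thus taking $\alpha = \min\{\mu, r\}$ gives
\begin{eqnarray*}
\inf_{v_n \in V_n} \|u - v_n\|_{H^{\beta/2}(\mathbb{R})} \le \|u - P_n u\|_{H^{\beta/2}(\mathbb{R})} \lesssim 2^{-n(\min\{\mu,r\}-\beta/2)} \|u\|_{H^\mu(\Omega)}.
\end{eqnarray*}
Combining the two displayed inequalities with $p - p_n = u - u_n$ and $u = p - \eta$ produces the claimed estimate.

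I do not foresee a serious obstacle: the heavy lifting (well-posedness of the nonhomogeneous variational problem, continuity/coercivity of $B$, and Jackson-type approximation in the Riesz basis framework) has already been done in Theorems \ref{Laxgramlemma1} and \ref{existenceedd} and Proposition \ref{section4lemma1}. The only point requiring a touch of care is to note that $p - p_n$ is zero outside $\Omega$ (both $p$ and $p_n$ equal $\eta$ there), so that $\|p-p_n\|_{H^{\beta/2}(\Omega)} \le \|u-u_n\|_{H^{\beta/2}(\mathbb{R})}$, legitimizing the reduction to the homogeneous convergence analysis.
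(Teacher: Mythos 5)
Your proposal is correct and follows essentially the route the paper intends: the paper states Theorem \ref{convergencedd2} without a separate proof, relying implicitly on exactly the reduction you describe, namely $p-p_n=u-u_n$ with Galerkin orthogonality, the continuity and coercivity bounds (\ref{uueeeeeffffff1})--(\ref{uueeeeeffffff2}), C\'ea's lemma as in Theorem \ref{errorestimate}, and the approximation estimate of Proposition \ref{section4lemma1} applied to $u=p-\eta$ with $\alpha=\min\{\mu,r\}$. Your added remark that $u-u_n$ vanishes outside $\Omega$, so the $H^{\beta/2}(\Omega)$ norm of $p-p_n$ is dominated by the $H^{\beta/2}(\mathbb{R})$ norm of $u-u_n$, is exactly the small point needed to justify the reduction.
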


\section{Numerical experiments}\label{sec7}
In this part, we set $\Omega=(0,1)$. The data under `$H^{\beta/2}$-Err' and `$L^2$-Err' are the errors in the norms $\left\|\cdot\right\|_{H^{\beta/2}(\mathbb{R})}$
and $\left\|\cdot\right\|_{L^2(\mathbb{R})}$, respectively. If the true solution is unknown, the `$H^{\beta/2}$-Err' and `$L^2$-Err' are, respectively, replaced by `$\widehat{H}^{\beta/2}$-Err' and `$\widehat{L}^2$-Err', where the errors at level $n$ are defined by
\begin{eqnarray}
\left\|p_{n+1}(x)-p_{n}(x)\right\|_{H^{\beta/2}(\mathbb{R})} ~~{\rm and}~\left\|p_{n+1}(x)-p_{n}(x)\right\|_{L^2(\mathbb{R})},
\end{eqnarray}
respectively, being similar to \cite[Example 5.2]{Deng:16}. We will examine if the computed convergence rates  reflect their counterparts in the $\left\|\cdot\right\|_{H^{\beta/2}(\mathbb{R})}$ and $\left\|\cdot\right\|_{L^2(\mathbb{R})}$ norms, respectively; the convergence rates (i.e., the data under `rate') at level $n$ are calculated by
\begin{eqnarray}
{\rm rate}=\log_2\left(\frac{{\rm the ~error~ with~ solution ~approximated ~in~ }V_{n-1}   }{{\rm the ~error~  with~ solution~ approximated ~in~ }V_{n} }\right).
\end{eqnarray}
\begin{example}\label{example1}
Consider model (\ref{Laplacedirichelt}) with the right-hand side source term $f(x)$ being derived from the exact solution $u(x)=x^2(1-x)$ for $x \in \Omega$.
\end{example}

If $\lambda=0$, the right-hand term $f(x)$  can be explicitly given as
\begin{eqnarray}
f(x)=\frac{1}{\pi}\left(3x-1/2+\left(3x^2- 2x\right)\log\left(\frac{1 - x}{x}\right)\right)
\end{eqnarray}
for $\beta=1$, and
\begin{eqnarray}
&&f(x)=-\frac{c_{\beta}\Gamma(-\beta)}{\Gamma(4-\beta)}\Big(2(3-\beta)x^{2-\beta}-6x^{3-\beta}+6(1-x)^{3-\beta}\nonumber\\
&&~~~~~~~-4(3-\beta)(1-x)^{2-\beta}+(3-\beta)(2-\beta) (1-x)^{1-\beta}\Big)
\end{eqnarray}
 for $\beta\in (0,1)\cup(1,2)$.
If $\lambda\not=0$,  the term $f(x)$ is obtained numerically.  For different $\lambda$ and $\beta$,
the numerical results are listed in Table \ref {table:1-1}, where  in the case $r=2$, the  $\left\|\cdot\right\|_{H^{\beta/2}(\mathbb{R})}$ errors
 for $\lambda=0$ and $\lambda=3$ are almost the same,  and both the $\left\|\cdot\right\|_{H^{\beta/2}(\mathbb{R})}$ convergence rates of $r=1$ and $r=2$
 indeed confirm the theoretical predictions in Theorem  \ref{errorestimate}.

\begin{table}[h t b p]\fontsize{6.0pt}{10pt}\selectfont
\begin{center}
 \caption{Numerical results  for Example \ref{example1} with $r=1$ and $r=2$.}
\begin{tabular}{cc|cc|cc|cc|cc}
  \hline
  $(r,\beta)$  & $n$   &\multicolumn{4}{c|}{$\lambda=0$ }    &\multicolumn{4}{|c}{$\lambda=3$  }\\
             &        & $H^{\beta/2}$-Err      & Rate                  &$L^2$-Err  & rate            &$H^{\beta/2}$-Err  & Rate            &$L^2$-Err & Rate    \\
   \hline

                  &$10$     &1.1942e-03   &  --  & 1.0296e-04       & --    &1.1949e-03     &--    & 1.0382e-04     &-- \\
    $(1,0.3)$     &$11$     &6.6222e-04   &  0.85   & 5.1475e-05      & 1.00   &6.6234e-04     &  0.85     & 5.1648e-05     &1.01\\
                   &$12$     &3.6722e-04   &  0.85   &2.5736e-05     &  1.00   &3.6724e-04     & 0.85    &2.5771e-05  &1.00 \\
                      \\[-5pt]

                        &$10$     &1.1902e-02   &--   &  2.1674e-04      &  --    &1.2002e-02     & --    & 7.0126e-04    &-- \\
    $(1,0.8)$         &$11$     &7.8613e-03    &  0.60    &9.8403e-05      &    1.14      &7.8908e-03    &  0.60     &3.0980e-04     &1.18 \\
                       &$12$     &5.1894e-03   &  0.60   &   4.4872e-05     &   1.13     &5.1980e-03     & 0.60    & 1.3638e-04     &1.18 \\

                       \hline
                     &$9$     &4.9203e-06  &  --   &  2.8654e-07    &  --    &4.9204e-06     & --    & 2.8687e-07     &-- \\
    $(2,0.5)$        &$10$     & 1.4591e-06    &  1.75   & 7.1360e-08     &   2.01   &1.4591e-06     & 1.75   & 7.1392e-08    &2.00 \\
                       &$11$     &4.3320e-07   &   1.75   & 1.7805e-08   &  2.00    &4.3320e-07     & 1.75    & 1.7808e-08     &2.00 \\
                       \\[-5pt]

                     &$9$     &3.0192e-05    &  --        &  2.8876e-07    &  --    &3.0193e-05     & --   & 2.9096e-07     &-- \\
    $(2,1.0)$        &$10$     & 1.0662e-05    & 1.50     & 7.1677e-08     &  2.01  &1.0662e-05     & 1.50   & 7.1959e-08     &2.01 \\
                   &$11$     &3.7641e-06   &  1.50  &   1.7850e-08    &  2.00   & 3.7641e-06    & 1.50    & 1.7886e-08    &2.00 \\
                       \\[-5pt]

                     &$9$     &1.1791e-03  &  --    & 4.3811e-07     &  --    &1.1791e-03     & --   & 4.6380e-07     &-- \\
    $(2,1.8)$      &$10$     &5.4999e-04   &  1.10   & 1.0533e-07     &  2.06   &5.4999e-04     &1.10  & 1.1117e-07     &2.06 \\
                   &$11$     &2.5655e-04   &  1.10   & 2.5385e-08     &  2.05    &2.5655e-04    & 1.10    & 2.6708e-08     &2.06 \\

\hline
\end{tabular} \label{table:1-1}
\end{center}
\end{table}

 The condition numbers of systems (\ref{noprecondiitioning}) and (\ref{preconditioningsystem}) and the corresponding iterations
 of the  conjugate gradient (CG) methods  (run in  MATLAB $7.0$) are presented in Table \ref{table:1-3}, where `Gauss' denotes the Gaussian elimination method,
 and  the `CG' and `PCG' denote the CG iterations for solving systems (\ref{noprecondiitioning}) and (\ref{preconditioningsystem}), respectively.
 The stopping criterion for the iteration methods is
 \[ \frac{\|R(k)\|_{l_2}}{\|R(0)\|_{l_2}}\le 1e-9,\]
 with $R(k)$ being the residual vector of linear systems after $k$ iterations.
The comparisons for the three methods are made almost with the same $L_2$ approximation errors, not listed in the table.
 One can see that without preconditioning, the condition number (see the data under `Cond') of
the stiffness matrix behaves like $\mathcal{O}(2^{n\beta})$,  and the iteration numbers (see the data under `iter') increase with $n$, especially when $\beta$ is big. After preconditioning, uniformly bounded condition numbers  are obtained,
 and the iteration numbers of  the CG method are essentially independent of $n$. We also display the eigenvalue distributions
of the stiffness matrices for $(\beta,r)=(0.3,1),\, (\beta,r)=(0.8,1),\, (\beta,r)=(1,2)$,
and $(\beta,r)=(1.8, 2)$ in Figure  \ref{figure:1-1}; they show the preconditioning benefits of  a more concentrated eigenvalue distribution.

 \begin{table}[h t b p]\fontsize{6.0pt}{10pt}\selectfont
\begin{center}
 \caption{The condition numbers and iteration performances of the conjugate gradient method for Example \ref{example1}  with $ \lambda=3$.}
\begin{tabular}{cc|cccc|ccc|c}
  \hline
   $(r,\beta)$  & $n$    &\multicolumn{4}{c}{CG }    &\multicolumn{3}{c}{PCG } & Gauss \\
             &         & $\# $Cond      & $\#$rate   &$\#$ iter  & CPU(s)    &  $\# $Cond        &$\#$ iter  & CPU(s) &CPU(s)            \\

          \hline
                  &$11$     & 1.5869e+02  &0.33   &  60   &  0.0290    &   9.7580   &23      & 0.0362     & 0.1457 \\
     $(1,0.3)$    &$12$     & 1.9934e+02  &0.33   &  67    & 0.0688    & 9.8138    &23      & 0.0558    &1.0028 \\
                 &$13$        &2.4939e+02  &0.32   &  75   & 0.1224     &  9.8564    &23       & 0.0734     &7.0545 \\
     \\[-5pt]
                 &$11$      & 5.7500e+02  &0.51   &  107  & 0.1120     &  15.896   &30      & 0.0513    &0.1532 \\
    $(1,0.5)$    &$12$     &  8.1873e+02 &0.50  &  128    & 0.1095    &  16.156   &31      &  0.0691     & 1.0320 \\
                 &$13$     &1.1634e+03  &0.50   &  152    & 0.2819    &  16.379    &32      & 0.0997    &7.0181 \\

                      \\[-5pt]

                 &$11$      & 6.5001e+03  &--   &  318   & 0.1535     & 53.005   &55    &0.0945    &0.1452 \\
    $(1,0.8)$    &$12$     & 1.1355e+04 & 0.80   &  422   & 0.3844    &  55.469   &58        & 0.1417  &  0.9792 \\
                 &$13$     & 1.9803e+04 & 0.80   &  559  & 0.8489    &   55.468   &60      & 0.2120    &7.1678 \\

 \hline
                 &$10$      &1.3886e+02 &--   &  51  & 0.0334    &  4.8126    &22       & 0.0165   &0.0207\\
    $(2,0.5)$    &$11$     &  2.0043e+02 &0.52   &  62    & 0.0688    & 4.8401   &22     &  0.0409     &0.1420 \\
                 &$12$     & 2.8753e+02  &0.52   &  75    & 0.0817    & 4.8585    &22      & 0.0402    &0.9726\\
  \\[-5pt]

                 &$10$     & 1.5838e+03 &--   &  141     & 0.0576   &  6.1738      &27      &  0.0286      &0.0327 \\
    $(2,1.0)$     &$11$     & 3.1773e+03 &1.00   & 200    & 0.2008    & 6.2427    &28      &0.0520       &0.1479 \\
                  &$12$     &  6.3654e+03 &1.00   &  285   & 0.2522    &  6.2961    &28      & 0.0575   &1.0236 \\
 \\[-5pt]
                 &$10$      & 2.5703e+04  &--   &  421   &0.1754     & 8.7180     &34      & 0.0365      &0.0300 \\
    $(2,1.5)$    &$11$     & 7.2749e+04 &1.50  &   710     & 0.7271    & 8.7749    &35        & 0.0633     &0.1573 \\
                 &$12$     & 2.0584e+05  &1.50  & 1198    & 1.1731    &  8.8212    &35        & 0.0740    &1.0176 \\
                      \\[-5pt]

                 &$10$      & 1.3923e+05  &--   & 767  & 0.3242     &  11.993   &40       & 0.0422    &0.0301 \\
    $(2,1.8)$    &$11$     & 4.8494e+05 &1.80   &  1432    &1.4589     & 12.138  &41      & 0.0727     &0.1522 \\
                 &$12$     & 1.6889e+06  &1.80   & 2674    &2.6306     & 12.256    &42     & 0.0836     &0.9777
 \\
\hline
\end{tabular}\label{table:1-3}
\end{center}
\end{table}

 \begin{figure}[h t b p]
\begin{center}
\includegraphics[width=1.1in,height=1.1in,angle=0]{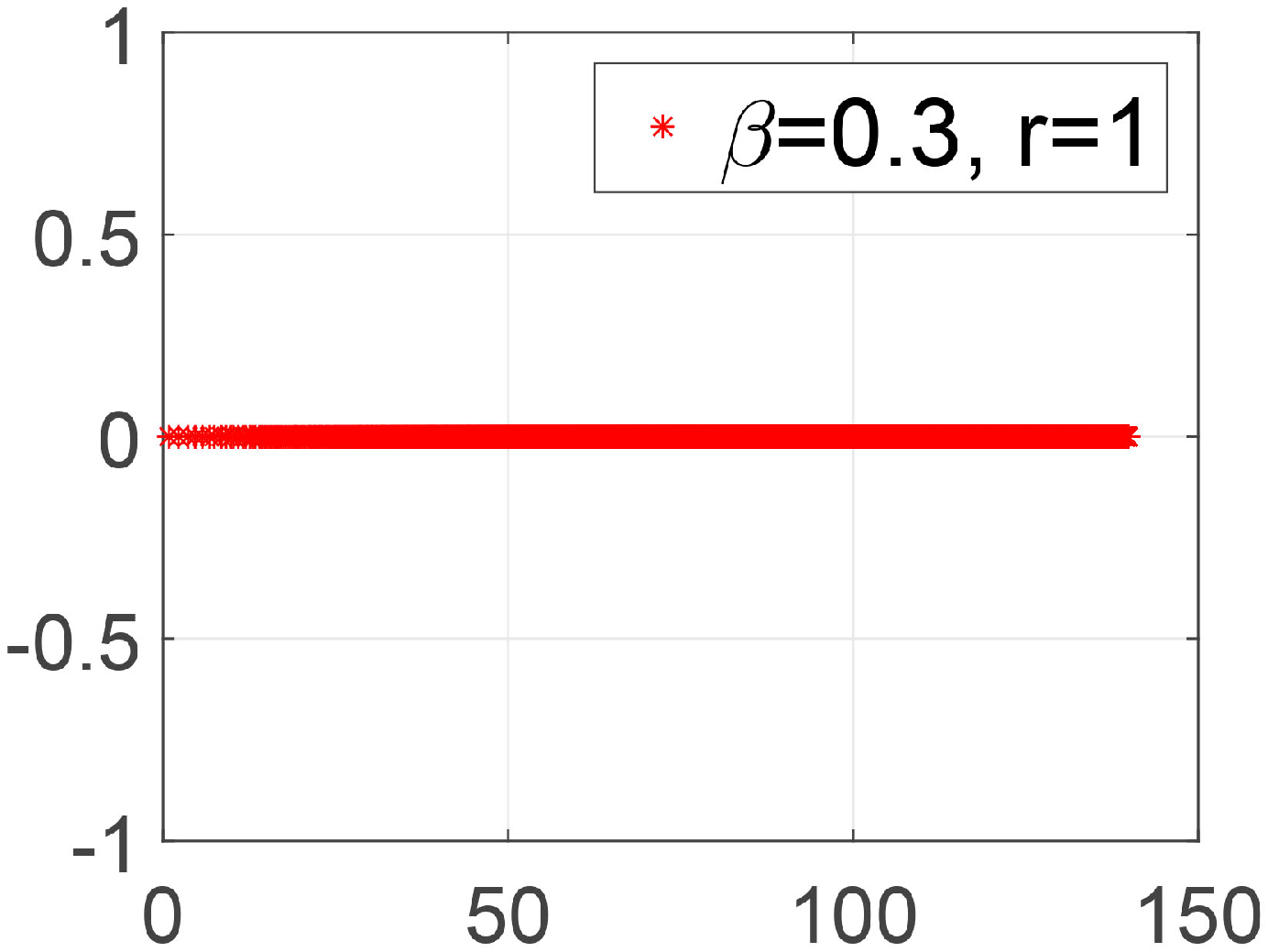}
\includegraphics[width=1.1in,height=1.1in,angle=0]{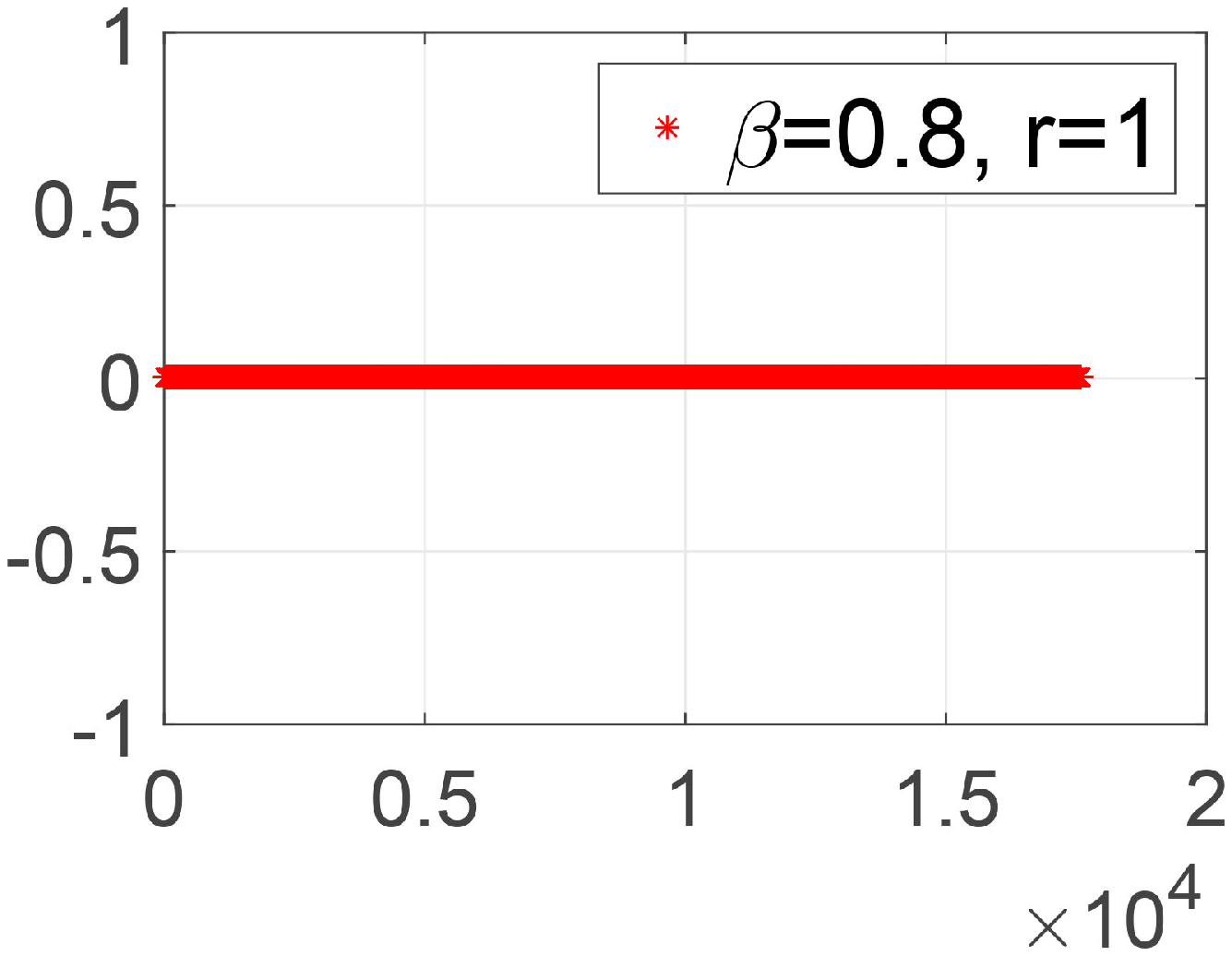}
\includegraphics[width=1.1in,height=1.1in,angle=0]{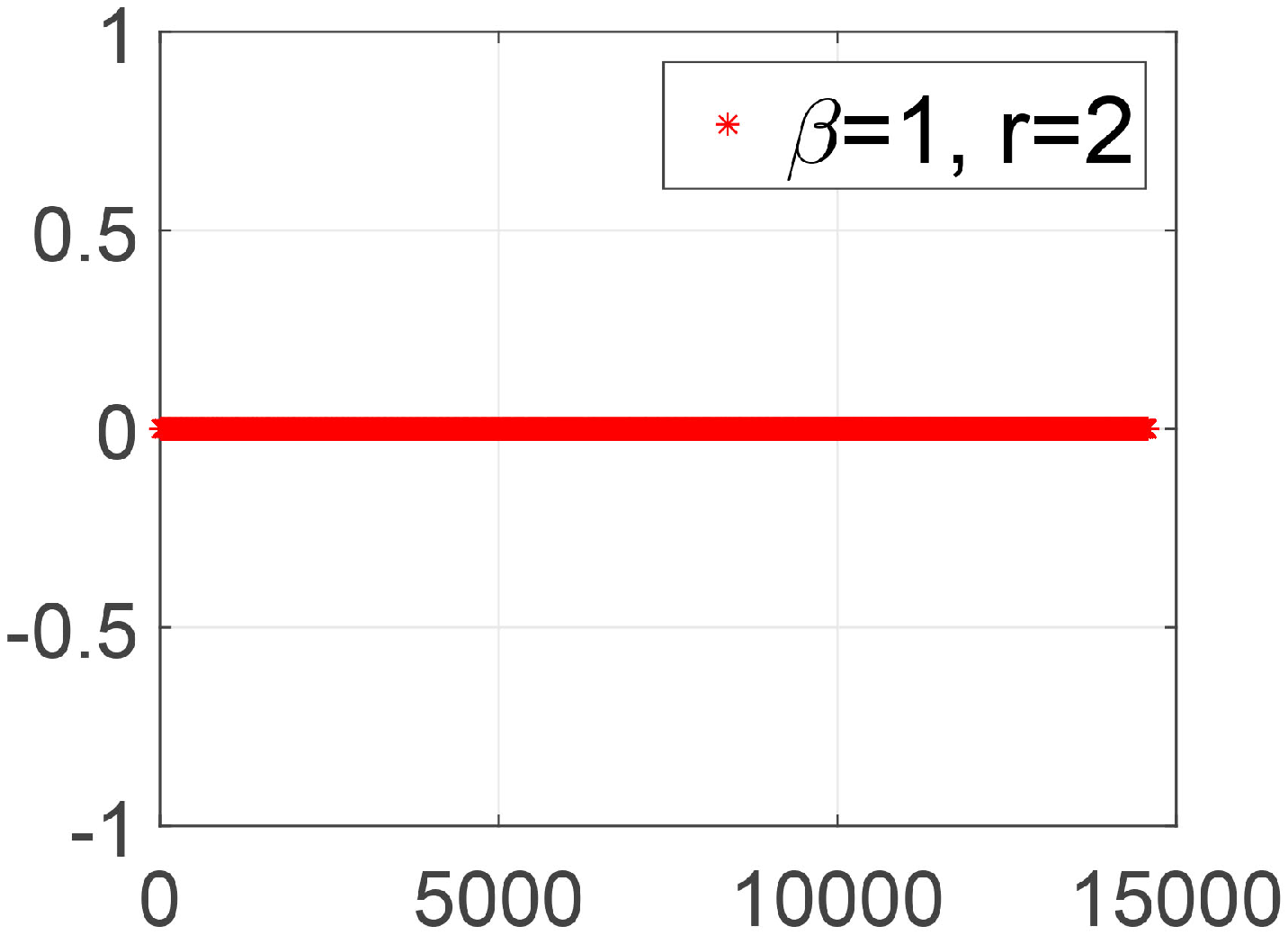}
\includegraphics[width=1.1in,height=1.1in,angle=0]{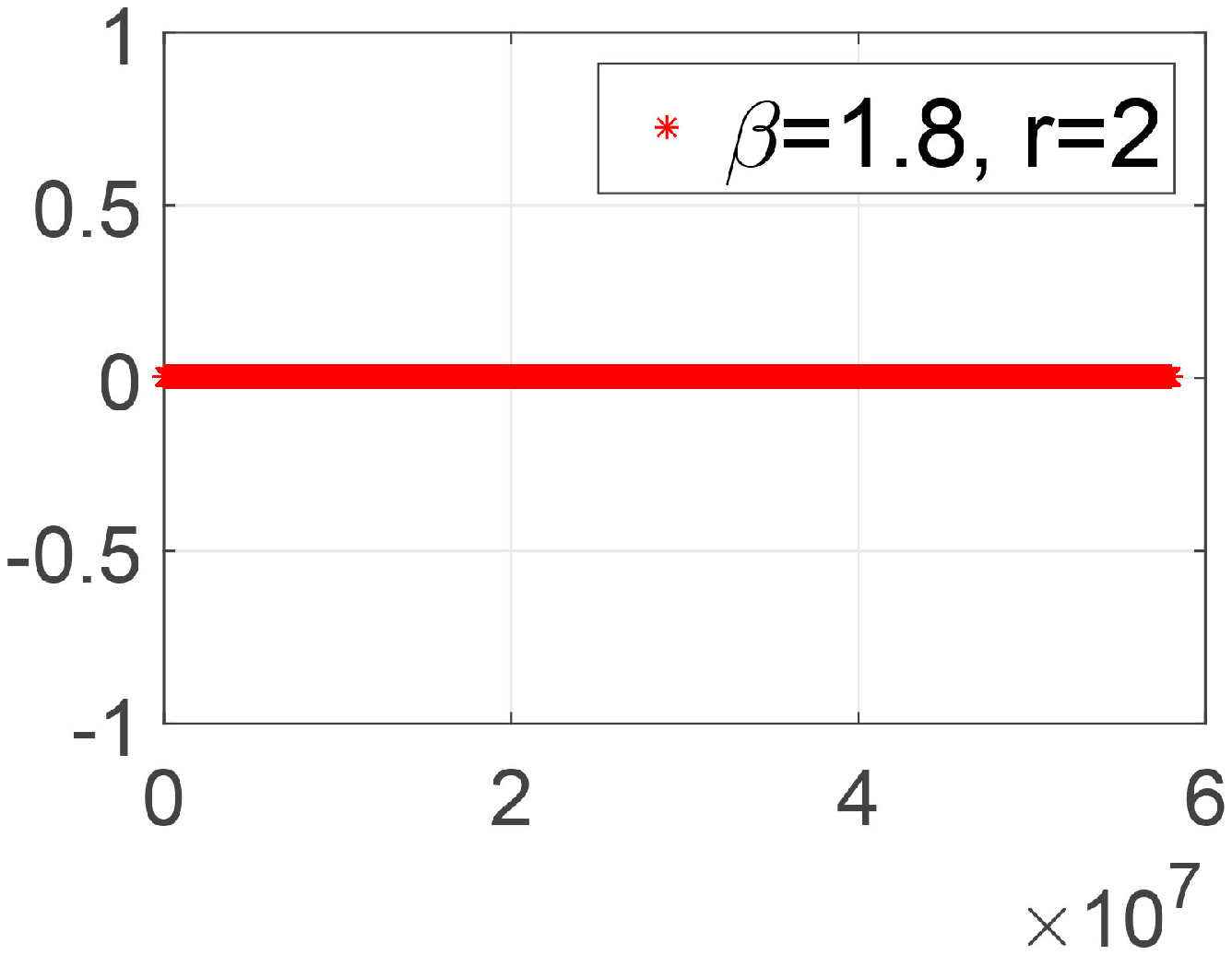}\\
\includegraphics[width=1.1in,height=1.1in,angle=0]{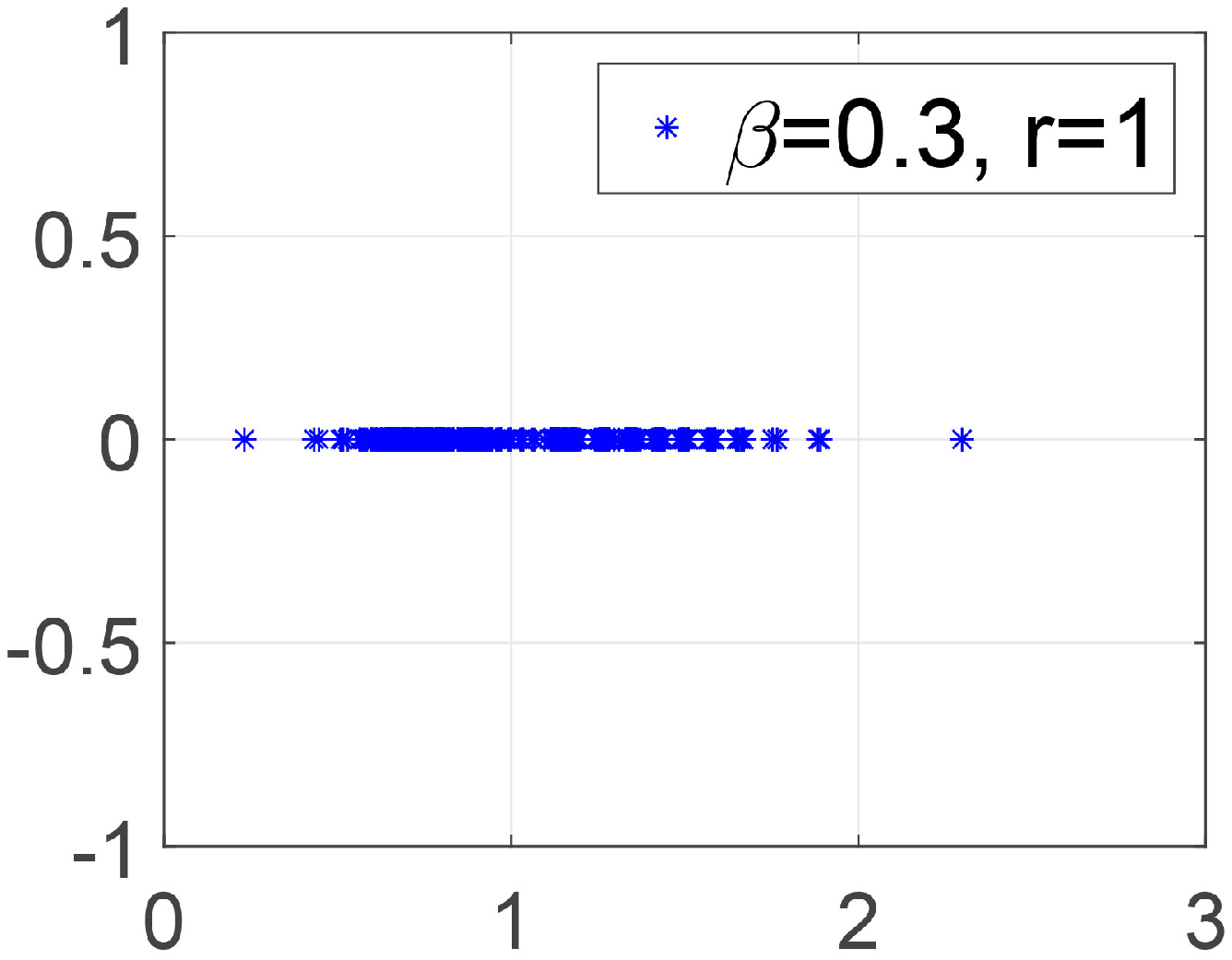}
\includegraphics[width=1.1in,height=1.1in,angle=0]{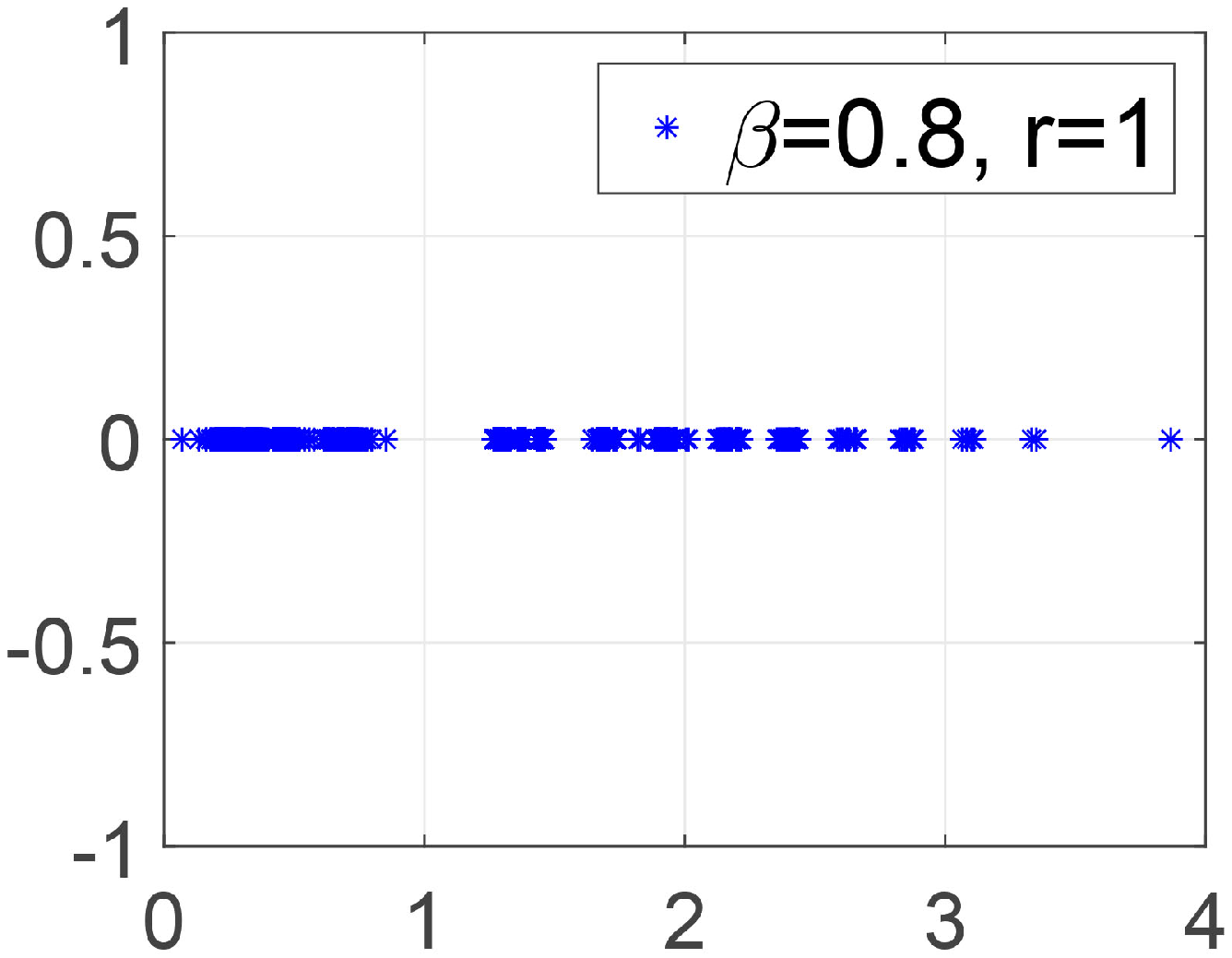}
\includegraphics[width=1.1in,height=1.1in,angle=0]{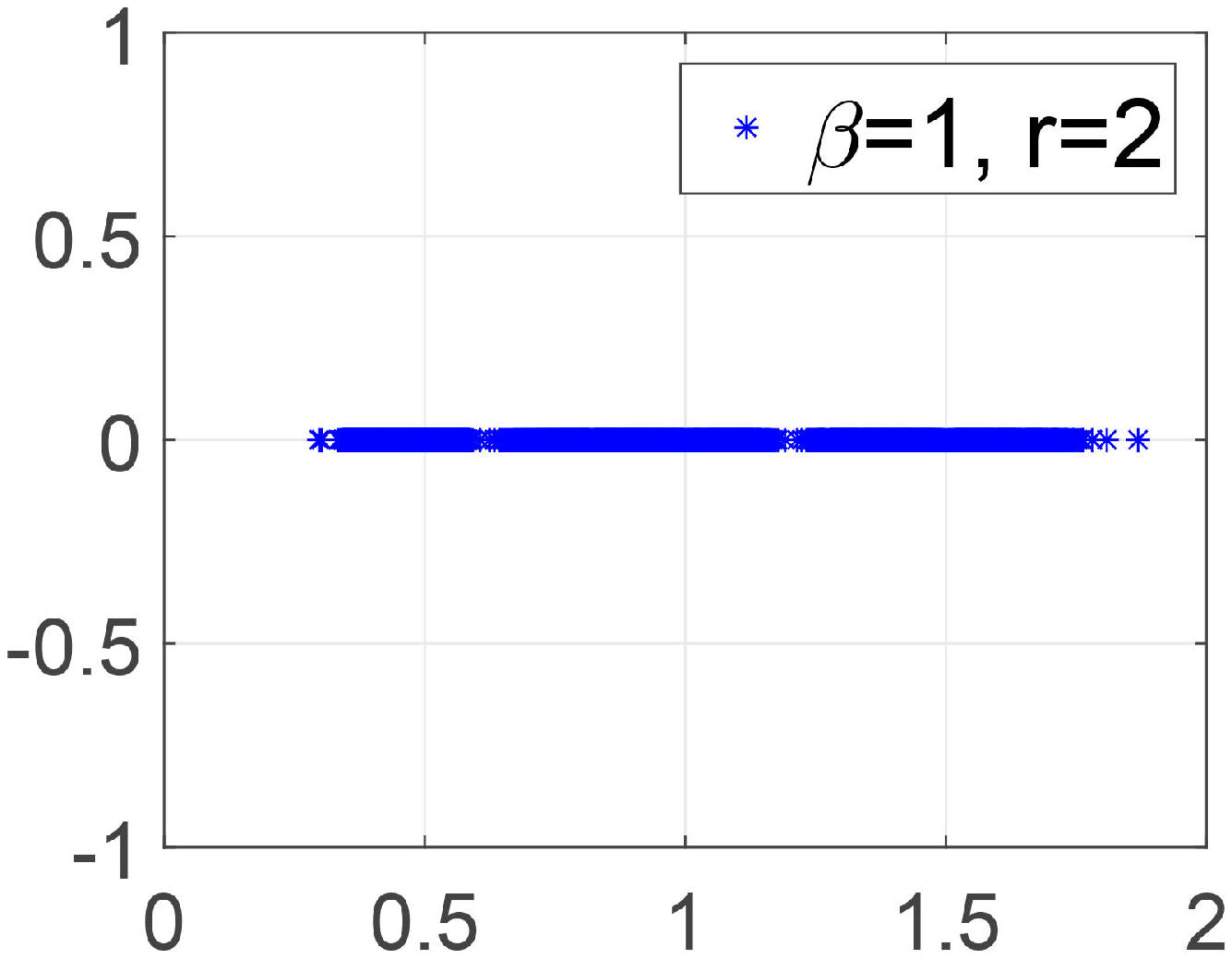}
\includegraphics[width=1.1in,height=1.1in,angle=0]{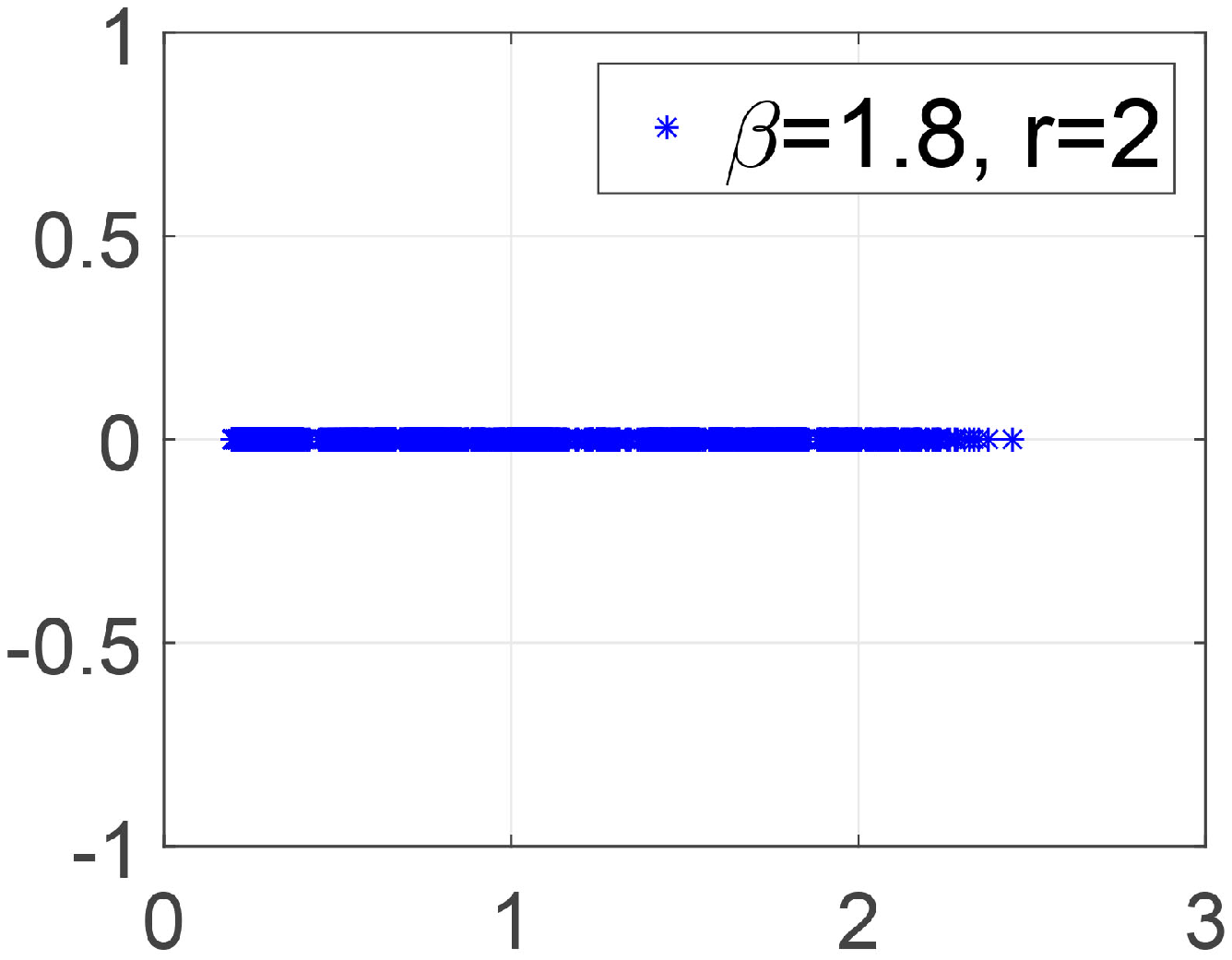}
\caption{Eigenvalue distribution of the systems (\ref{noprecondiitioning}) and (\ref{preconditioningsystem})
(for $n=12$) with the single scaling basis functions (first line) and  the corresponding multiscale Reisz basis functions
 (second line), respectively. The horizontal and vertical axes are respectively the real and imaginary axis.
 }\label{figure:1-1}
\end{center}
\end{figure}

\begin{example}\label{example2}
We now take $f(x)=1$ in model (\ref{Laplacedirichelt}).
\end{example}

If $\lambda=0$, for $x\in \Omega$, the exact solution is $p(x)=\frac{(x-x^2)^{\beta/2}}{\Gamma(1+\beta)}$ . Although the right-hand side is smooth, $p(x)$ just belongs to $H^{\beta/2+1/2-\epsilon}(\mathbb{R})$ for any $\epsilon>0$. The numerical results are listed in Table \ref{table:2-1},
where  the  predicted  $1/2-\epsilon$ order of convergence in the  $\left\|\cdot\right\|_{H^{\beta/2}(\mathbb{R})}$  norm  by Theorem \ref{errorestimate}
is   obtained. The  $L^2$ convergence orders  $(1+\beta)/2$  for $\beta\in (0,1]$ and $1$ for $\beta\in (1,2)$ confirm the result given in \cite[Proposition 4.3]{Borthagaray:16} for $\lambda=0$. When $\lambda\not=0$,  $p(x)$ cannot be obtained explicitly
so we list the $\widehat{H}^{\beta/2}$ and $\widehat{L}^2$ errors instead, and examine if the convergence rates reflect the convergence rates in the   $\left\|\cdot\right\|_{H^{\beta/2}(\mathbb{R})}$
and $\left\|\cdot\right\|_{L^2(\mathbb{R})}$ norms, respectively.
The numerical results are presented in  Table \ref{table:2-2}, suggesting that the exact solution has a low regularity, but this needs to be confirmed by more in-depth analysis.
\begin{table}[h t b p]\fontsize{7.0pt}{10pt}\selectfont
\begin{center}
 \caption{Numerical results  for Example \ref{example2} with $r=2$ and $\lambda=0$.}
\begin{tabular}{cc|cc|cc|cc|cc}
  \hline
        $\beta$     &        & $H^{\beta/2}$-Err      & Rate                  &$L^2$-Err  & rate            &$\widehat{H}^{\beta/2}$-Err  & Rate            &$\widehat{L}^2$-Err & Rate    \\
   \hline
                     &$8$     &9.0822e-02   & --   & 7.8260e-03     & --    &6.4202e-02     & --    & 5.3795e-03     &-- \\
    $0.5$    &$9$     &6.4156e-02   &   0.50   & 4.6533e-03    &   0.75   &4.5349e-02    &0.50   &3.1985e-03     &0.75 \\
                     &$10$     &4.5340e-02  &   0.50   & 2.7668e-03     &   0.75    &3.2042e-02    & 0.50    &1.9019e-03     &0.75 \\

                       \\[-5pt]

                    &$8$     &4.7148e-02   & --    & 1.1967e-03     & --    &3.3350e-02     & --    & 7.4060e-04     &-- \\
    $1.0$    &$9$     &3.3320e-02    &  0.50  & 6.1260e-04     & 0.97   &2.3565e-02     & 0.50  & 3.7597e-04     &0.98 \\
                  &$10$     &2.3554e-02   &  0.50   & 3.1330e-04     &  0.97   &1.6657e-02     & 0.50    &1.9082e-04     &0.98 \\
                        \\[-5pt]

                    &$8$     &2.2624e-02   &  --     & 1.7078e-04     &  --    &1.6039e-02     & --   & 9.3174e-05   &-- \\
    $1.5$    &$9$     &1.5956e-02  &  0.50      & 8.3518e-05      &1.03    &1.1297e-02     & 0.50   & 4.4561e-05    & 1.06 \\
                       &$10$     &1.1268e-02  &  0.50   & 4.1119e-05    &  1.02    &7.9727e-03     & 0.50    & 2.1564e-05    & 1.05 \\

                    \hline

\end{tabular}\label{table:2-1}
\end{center}
\end{table}

\begin{table}[h t b p]\fontsize{7.0pt}{10pt}\selectfont
\begin{center}
 \caption{Numerical results for Example \ref{example2} with $r=2$ and $\lambda\not=0$.}
\begin{tabular}{cc|cc|cc|cc|cc}
  \hline
   $$  & $n$   &\multicolumn{4}{c|}{$\lambda=1.5$ }    &\multicolumn{4}{|c}{$\lambda=3$  }\\
        $\beta$     &        & $\widehat {H}^{\beta/2}$-Err      & Rate                  &$\widehat{L}^2$-Err  & rate            &$\widehat{H}^{\beta/2}$-Err  & Rate            &$\widehat{L}^2$-Err & Rate    \\
   \hline
                     &$8$     &3.1127e-02   &  --    & 2.7167e-03      &  --   &4.3713e-02     & --    & 3.9690e-03    &-- \\
    $0.5$    &$9$     &2.1930e-02  &    0.50      &1.5970e-03      &  0.77   &3.0757e-02     & 0.51   & 2.3121e-03     &0.78\\
                     &$10$     &1.5467e-02   &    0.50   &  9.4088e-04     & 0.76   &2.1674e-02     & 0.50    &1.3514e-03    &0.77 \\
                        \\[-5pt]

                    &$8$     &1.6877e-02  & --    & 4.2785e-04     & --   &2.0984e-02    & --   & 5.8544e-04     &-- \\
    $1.0$    &$9$     &1.1972e-02   &  0.50        &2.1812e-04     & 0.97   &1.4924e-02     &  0.49   &2.9984e-04     &0.97 \\
                 &$10$     &8.4834e-03  &  0.50       & 1.1086e-04     & 0.98    &1.0593e-02     & 0.49    & 1.5273e-04     &0.97 \\
                        \\[-5pt]

                    &$8$     &5.8889e-03  &  --    & 3.9271e-05       & --    &6.5308e-03     & --      &  4.7041e-05     &-- \\
    $1.5$    &$9$     &4.1724e-03   &  0.50  & 1.9185e-05       &  1.03         &4.6490e-03     &  0.49   & 2.3304e-05    &1.01 \\
                   &$10$     &2.9553e-03   & 0.50   & 9.4274e-06   &  1.02    &3.3025e-03     &  0.49    & 1.1562e-05    &1.01 \\

                    \hline

\end{tabular}\label{table:2-2}
\end{center}
\end{table}

\begin{example}\label{example3}
In this example, model (\ref{Laplacedirichelt22}) is considered in two cases.

\end{example}
For the first case, let $g(x)$ and $f(x)$ in (\ref{Laplacedirichelt22}) be the functions derived from the postulated exact solution $p(x)=e^{-x^2}$. Note that $\mathscr{F}[e^{-x^2}](\xi)=\sqrt{\pi}e^{-\xi^2/4}$. Then $p(x)\in H^\mu(\mathbb{R})$ for any $\mu\ge 0$.
The numerical results for $r=1$ and $\eta(x)=S_1(x)=\left(e^{-1}-1\right)x+1$ are presented in Figure \ref{figure:3.1}, and show $\frac{2-\beta}{2}$th order convergence in $\left\|\cdot\right\|_{H^{\beta/2}(\mathbb{R})}$ norm and  first order convergence in the $\left\|\cdot\right\|_{L^2(\mathbb{R})}$ norm.
\begin{figure}[h t b p]
\centering
\includegraphics[width=0.45\textwidth]{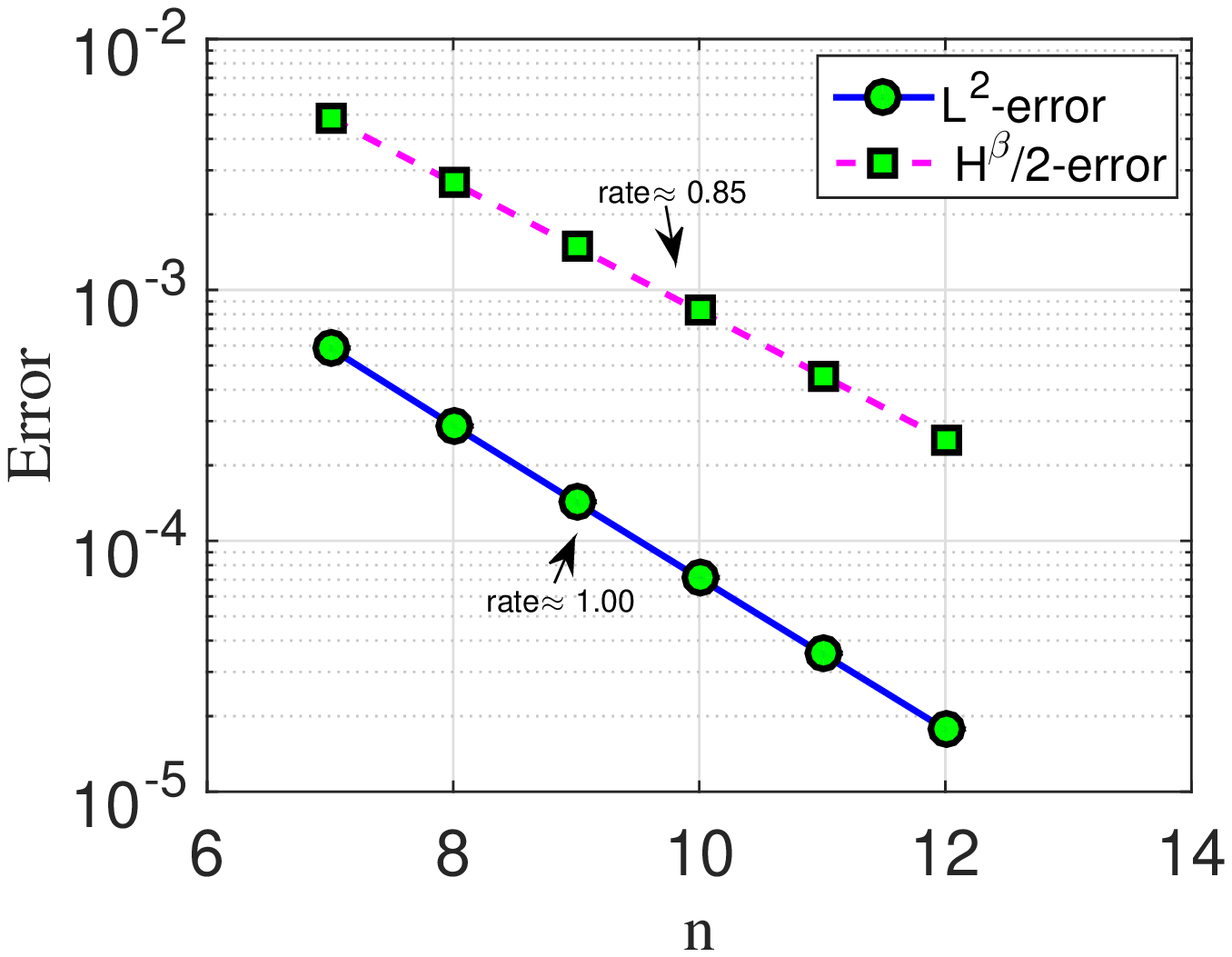}
\includegraphics[width=0.45\textwidth]{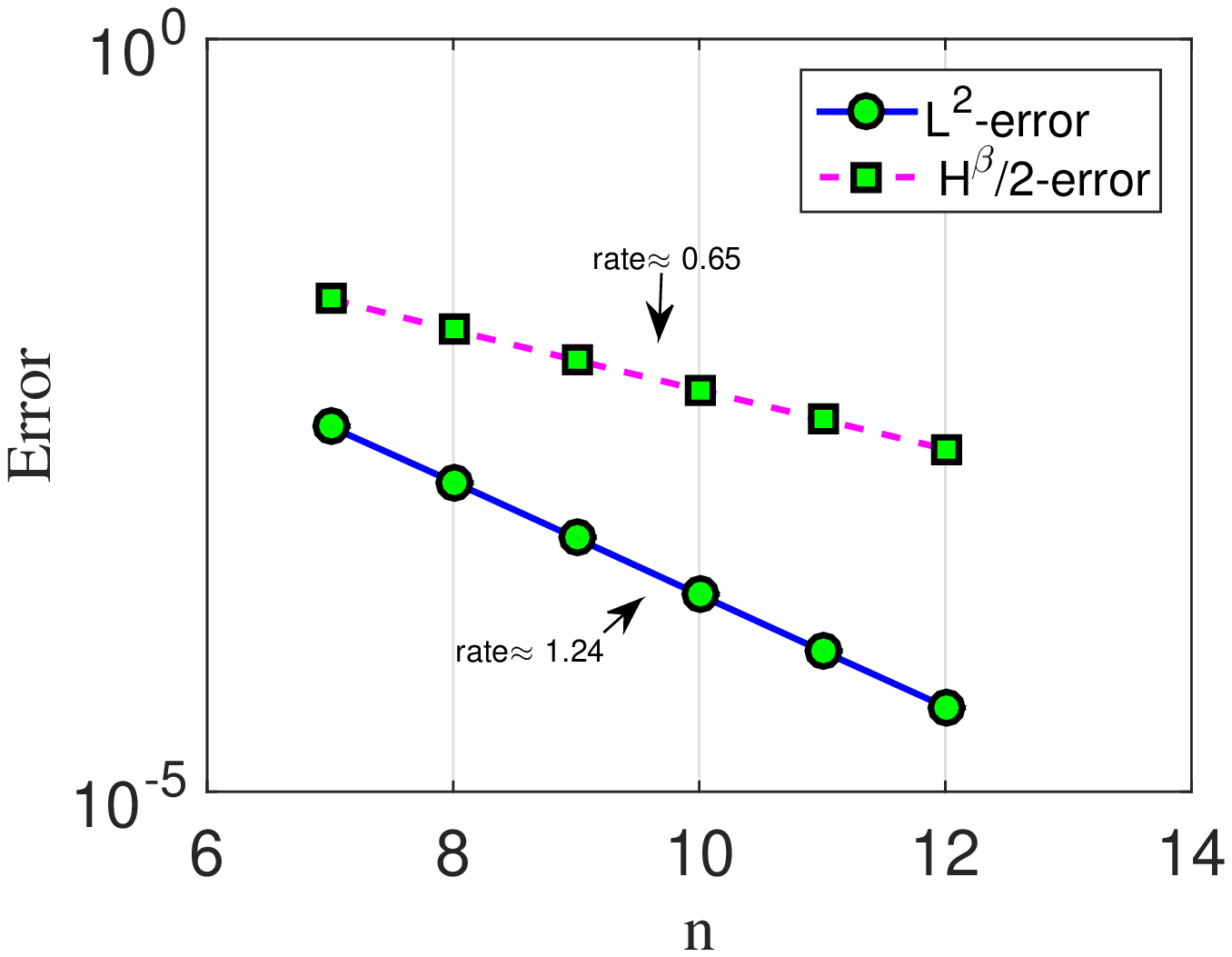}
\caption{Numerical results for  Example \ref{example3} with  $p(x)=e^{-x^{2}}, \,\eta(x)=S_1(x)$,  and $r=1$. The left one is for $\beta=0.3$ and $\lambda=1.5$, and the right one for $\beta=0.7$ and $\lambda=3$.}
\label{figure:3.1}
\end{figure}

For the second case, consider model (\ref{Laplacedirichelt22}) with the generalized  Dirichlet type boundary condition
\begin{eqnarray}\label{geeddgex}
g(x)=\left\{\begin{array}{lll}
2x-2,&x\in [1,\frac{3}{2}],\\[4pt]
-2x,&x\in [-\frac{1}{2},0],\\[4pt]
0,& x\in (-\infty,-\frac{1}{2})\cup(\frac{3}{2},\infty),
\end{array}\right.
\end{eqnarray}
and the source term $f(x)$ being derived from the exact solution
\begin{eqnarray}
p(x)=\left\{\begin{array}{lll}
2x-2,&x\in [1,\frac{3}{2}],\\[4pt]
(x-x^2)^2,&x\in (0,1),\\
-2x,&x\in [-\frac{1}{2},0],\\[4pt]
0,& {\rm else}.
\end{array}\right.
\end{eqnarray}
Obviously, $p(x)$ does not belong to $H^{\mu}(\mathbb{R})$ for $\mu>1/2$ because of its  discontinuity at $x=3/2$ and $x=-1/2$.
We consider two different $\eta(x)$, i.e., the  $\eta(x)=S_2(x)=0$ for $x\in \Omega$, and the $\eta(x)=S_3(x)=2x(x-1)$ for $x\in \Omega$.
Note that both of them satisfy $\int_\Omega\int_{\mathbb{R}}\frac{\left(\eta(x)-\eta(y)\right)^2}{|x-y|^{1+\beta}}dxdy<\infty$, required in Theorem \ref{existenceedd}, but do not belong to $H^{\beta/2}(\mathbb{R})$ (the requirement in \cite[Subsection: 4.1]{Deng:17}) for $\beta>1$, which implies that the condition in Theorem \ref{existenceedd} is weaker than the one of \cite[Subsection: 4.1]{Deng:17}.
The numerical results are presented in Table \ref{table:3-1}, and also confirm the theoretical prediction of Theorem \ref{convergencedd2}.

\begin{table}[h t b p]\fontsize{6.0pt}{10pt}\selectfont
\begin{center}
 \caption{Numerical results  for Example \ref{example3} with $g(x)$ given as (\ref{geeddgex})  and $r=2$.}
\begin{tabular}{cc|cc|cc|cc|cc}
  \hline
  $(\eta,\beta)$  & $n$   &\multicolumn{4}{c|}{$\lambda=0$ }    &\multicolumn{4}{|c}{$\lambda=3$  }\\
             &        & $H^{\beta/2}$-Err   & Rate    &$L^2$-Err  & rate      &$H^{\beta/2}$-Err  & Rate            &$L^2$-Err & Rate    \\
   \hline

                    &$9$     &2.2094e-06   &  --    & 1.2913e-07    &  --    &2.2096e-06    & --   & 1.2974e-07      &-- \\
    $(\eta_1,0.5)$    &$10$   & 6.5387e-07   &  1.76   & 3.2038e-08    &  2.01   &6.5388e-07    & 1.76   &  3.2096e-08     &2.02 \\
                     &$11$     &1.9393e-07  &  1.75   & 7.9785e-09    &  2.00    & 1.9393e-07    & 1.75    & 7.9839e-09     &2.01 \\
                       \\[-5pt]

                    &$9$     &1.0046e-05   &  --    &  5.8314e-07     &  --    & 1.0046e-05     & --     & 5.8329e-07     &--\\
    $(\eta_2,0.5)$    &$10$    &2.9847e-06   &  1.75   & 1.4573e-07     & 2.00   &2.9847e-06     & 1.75   & 1.4574e-07     &2.00 \\
                       &$11$    &8.8697e-07  & 1.75   & 3.6426e-08    & 2.00    &8.8697e-07     & 1.75   & 3.6427e-08     &2.00 \\

    \hline

                    &$9$     &1.3532e-05     &  --    &  1.3167e-07     &  --    & 1.3532e-05     & --   & 1.3623e-07    &-- \\
    $(\eta_1,1.0)$    &$10$    & 4.7735e-06   &  1.50  &  3.2397e-08     &   2.02   &4.7734e-06     & 1.50   & 3.3008e-08    &2.04 \\
                    &$11$    &1.6846e-06      & 1.50  & 8.0282e-09     &  2.01   &1.6846e-06     & 1.50   & 8.1080e-09     &2.02 \\

                           \\[-5pt]

                    &$9$     &6.1750e-05   &  --     & 5.8376e-07     &  --    &6.1750e-05    & --   & 5.8506e-07     &-- \\
    $(\eta_2,1.0)$    &$10$     & 2.1824e-05   &    1.50  &  1.4581e-07     &  2.00   &2.1824e-05     &1.50   &1.4597e-07     &2.00 \\
                       &$11$     &7.6881e-06   &    1.51   &3.6437e-08     &  2.00    & 7.6881e-06     & 1.51   & 3.6457e-08     &2.00 \\

                      \hline

                        &$9$     &1.7807e-04   &  --   &   1.8024e-07     &  --    & 1.7807e-04     &--     & 1.9737e-07     &-- \\
    $(\eta_1,1.6)$     &$10$     & 7.7452e-05   &   1.20   &  4.2847e-08     &  2.07    & 7.7452e-05   &1.20  &  4.6850e-08    &2.07  \\
                       &$11$     & 3.3700e-05   &   1.20   & 1.0224e-08    &  2.06     & 3.3700e-05   &1.20     & 1.1129e-08  &2.07\\

                    \\[-5pt]
                     &$9$     &8.1486e-04   &  --   & 6.1682e-07     &  --    &8.1486e-04     & --   & 6.4349e-07    &-- \\
    $(\eta_2,1.6)$    &$10$     &3.5467e-04   &  1.20   & 1.5186e-07    &  2.02  &3.5467e-04     & 1.20   & 1.5665e-07     &2.04 \\
                       &$11$     &1.5438e-04   & 1.20   & 3.7543e-08     &  2.01    &1.5438e-04    &1.20  &3.8407e-08     &2.03 \\
                    \hline

\end{tabular}\label{table:3-1}
\end{center}
\end{table}

 \section{Conclusions}\label{sec8}

We have presented Riesz basis Galerkin methods for effectively solving the tempered fractional Laplacian equation,
 where the operator is the generator of the tempered $\beta$-stable L\'evy process. The well-posedness of the equation and
convergence of the scheme were theoretically proved. When $\lambda=0$, the model reduces to a fractional Laplacian equation and
the present theoretical framework is still valid.
We also discussed efficient implementations of our methods, including the generation of
stiff matrix and the effectiveness of multiscale preconditioning. We performed several numerical simulations to confirm the theoretical
results and demonstrate the high efficiency of the schemes. The present work is confined to one dimensional problems with basis functions on
uniform meshes. The generalization to higher dimensions and the approximation with locally refined basis functions are very important
topics and will be considered in future work.


\appendix
\section {Proof of $\mathcal{G}(\lambda,\xi,\beta)\ge 0$} \label{appendix}

\begin{proof}
By (\ref{intergraeliqneqq}), it is easy to check that this proof is equivalent to show that	
\begin{eqnarray*}
f(t)=(-1)^{\lfloor \beta\rfloor}\left(\cos\Big(\beta\arctan(t)\Big)-\frac{1}{\left(1+t^2\right)^{\beta/2}}\right)\ge0
\end{eqnarray*}
for $\beta\in (0,1)\cup(1,2)$, and
\begin{eqnarray*}
g(t)=t\arctan(t)-\frac{1}{2}\ln(1+t^2)\ge 0
\end{eqnarray*}
for $\beta=1$,
where $t=\frac{|\xi|}{\lambda}\in[0,\infty)$.
For $\beta=1$, we have $g^{\prime}(t)=\arctan(t)\ge 0$, so $g(t)\ge g(0)=0$. For $\beta\in (0,1)\cup(1,2)$, there exists
\begin{eqnarray*}
f^{\prime}(t)=(-1)^{\lfloor \beta\rfloor}\frac{\beta}{1+t^2}\left(-\sin\Big(\beta\arctan(t)\Big)+\frac{t}{(1+t^2)^{\frac{\beta}{2}}}\right).
\end{eqnarray*}
Thus if $0<\beta<1$, we have
\begin{eqnarray*}
f^{\prime}(t)\ge (-1)^{\lfloor \beta\rfloor}\frac{\beta}{1+t^2}\left(-\sin\Big(\arctan(t)\Big)+\frac{t}{(1+t^2)^{\frac{1}{2}}}\right)=0.
\end{eqnarray*}
Then $f(t)\ge f(0)=0$ for $\beta\in (0,1)$.  If $1<\beta<2$, we have
\begin{eqnarray*}
&&f^{\prime}(t)=(-1)^{\lfloor \beta\rfloor}\Bigg(-\sin\Big((\beta-1)\arctan(t)\Big)\cos\Big(\arctan(t)\Big)\nonumber\\
&&~~-\frac{t}{(1+t^2)^{\frac{1}{2}}}\bigg(\cos\Big((\beta-1)\arctan(t)\Big)-
\frac{1}{\left(1+t^2\right)^{\frac{\beta-1}{2}}}\bigg)\Bigg)\ge0,
\end{eqnarray*}
where the result $f(t)\ge0$ for $\beta\in (0,1)$ has been used to justify the nonnegativity. Then $f(t)\ge f(0)=0$ for $\beta\in(1,2)$.
\end{proof}


\end{document}